\documentclass{amsart}
\usepackage{tikz}
\usepackage{graphicx}

\usepackage{amssymb}
\usepackage{amsmath}
\usepackage{amsthm, amsfonts, mathrsfs}

\newtheorem{theorem}{Theorem}
\newtheorem{lemma}[theorem]{Lemma}
\newtheorem{cor}[theorem]{Corollary}
\newtheorem{prop}[theorem]{Proposition}

\newtheorem{ques}{Question}

\newtheorem{exam}[theorem]{Example}

\title{Homotopy types of  Vietoris-Rips complexes of Hypercube  Graphs}

\date{May 2023}

\author[Z. Feng]{Ziqin Feng}
\address{Department of Mathematics and Statistics\\Auburn University\\Auburn, AL~36849}
\email{zzf0006@auburn.edu}

\date{\today}
\subjclass[2020]{05E45, 55P10, 55N31, 55U10}
\keywords{Vietories-Rips Complexes, Simplicial Complexes, Homotopy Types,  Hypercube Graphs}

\begin{document}
\begin{abstract} We describe the homotopy types of Vietoris-Rips complexes of hypercube graphs at scale $3$. We represent the vertices in the hypercube graph $Q_m$ as the collection of all subsets of $[m]=\{1, 2, \ldots, m\}$ and equip $Q_m$ with the  metric using symmetric difference distance. It is proved in \cite{AA22} that the Vietoris-Rips complexes of hypercube graphs $Q_m$ at scale $2$, $\mathcal{VR}(Q_m; 2)$, is homotopy equivalent to $c_m$-many spheres with dimension $3$ where $c_m=\sum_{0\leq j< i<m} (j+1)(2^{m-2}-2^{i-1})$. Questions are raised in \cite{AA22} for determining the homotopy types of $\mathcal{VR}(Q_m, r)$ with large scales $r=3, 4, \ldots, m-2$. We prove that for $m\geq 5$, $$\mathcal{VR}(Q_m; 3)\simeq (\bigvee_{2^{m-4}\cdot{m\choose 4}} S^7) \vee (\bigvee_{\sum_{i=4}^{m-1}2^{i-4}\cdot{i\choose 4}} S^4).$$

     %\sum_{i=3}^m 2^{i-3}{i\choose 3}
\end{abstract}
\maketitle

\section{Introduction}

The Vietoris-Rips complex $\mathcal{VR}(X;r)$ of a metric space $(X,d)$ with a scale parameter $r\geq 0$, introduced in \cite{VI27} and \cite{EG87}, is a simplicial complex with vertex set $X$, where $\sigma\in [X]^{<\infty}$ is a simplex in $\mathcal{VR}(X;r)$ if and only if its diameter $\text{diam}(\sigma)\leq r$. Here, $[X]^{<\infty}$ denotes the collection of all finite subsets of $X$, and for any subset $S$ of $X$, $\text{diam}(S)$ is defined as the supremum of all distances $d(x,y)$ between pairs of points $x,y\in S$. It is important to determine the homotopy types of Vietoris-Rips complex of finite metric spaces in applied topology due to the development of the application of persistent homology in data analysis \cite{GC09,RG08, ZC05}. Recently, a lot of attention has been drawn to study the homotopy types of Vietoris-Rips complexes of different metric spaces, for example, circles and ellipses (\cite{AA17, AAR19}), metric graphs (\cite{GGPSWWZ18}), geodesic spaces (\cite{ZV19, ZV20}), planar point sets (\cite{ACJS18}, \cite{CDEG10}), Hypercube graphs (\cite{AA22, FN23, Shu22}), and more (\cite{MA13}, \cite{AAGGPS20}, \cite{AFK17}).

In this paper, we investigate the homotopy type of the Vietoris-Rips complex $\mathcal{VR}(Q_m; 3)$ where $Q_m$ is the hypercube graph. The hypercube graph is a graph whose vertices are all binary strings of length $m$, denoted by $Q_m$, and whose edges are given by pairs of such strings with Hamming distance $1$. The Hamming distance between any two binary strings with the same length is defined as the number of positions in which their entries differ. Therefore, $Q_m$ is a metric space and the hypercube graph is the Vietoris-Rips complex $\mathcal{VR}(Q_m; 1)$. Each binary string of length $m$ can also be represented as a subset of $[m]=\{1, 2, \ldots, m\}$, so another representation of $Q_m$ is the power set of $[m]$ equipped with the symmetric difference distance. For any $A$ and $B$ being subsets of  $[m]$, the distance between $A$ and $B$ is defined as $d(A, B)=|A\Delta B|$, where $A\Delta B$ denotes the symmetric difference of $A$ and $B$, i.e., $(A\setminus B)\cup (B\setminus A)$, and $|A\Delta B|$ denotes the cardinality of $A\Delta B$. In this paper, we'll use subsets of $[m]$ to represent the vertices in $Q_m$.

Adamaszek and Adams investigated the  complexes $\mathcal{VR}(Q_m; r)$ with small scales $r=0,1,2$ in \cite{AA22}. The complex $\mathcal{VR}(Q_m; 0)$ is a wedge sum of $(2^m-1)$-many $S^0$'s, and $\mathcal{VR}(Q_m; 1)$ is a wedge sum of $((m-2)2^{m-1}+1)$-many $S^1$'s. Their main result in \cite{AA22} is that the complex $\mathcal{VR}(Q_m; 2)$ is homotopy equivalent to a wedge sum of $c_m$ copies of $S^3$'s, where $c_m$ is given by $c_m=\sum_{0\leq j<i<m}(j+1)(2^{m-2}-2^{i-1})$. Note that the complex  $\mathcal{VR}(Q_m; m-1)$ is a boundary of the $(2^{m-1})$-dimensional cross-polytope with $2^m$ vertices and therefore homotopy equivalent to a sphere with dimension $2^{m-1}-1$. Then it is a natural question, as the authors raised in \cite{AA22}, to ask  what is the homotopy types of $\mathcal{VR}(Q_m; r)$ with large scales $r=3, 4, \ldots, m-2$. With the help of Polymake \cite{Poly10} and Risper++ \cite{Zhang}, Adamaszek and Adams in \cite{AA22} also computed the reduced homology groups of $\mathcal{VR}(Q_m; 3)$ for $m=5, 6, \ldots, 9$, with coefficients $\mathbb{Z}$ or $\mathbb{Z}/2\mathbb{Z}$. The results show that these homology groups are nontrivial only in dimensions $4$ and $7$, indicating that the complex $\mathcal{VR}(Q_m; 3)$ is homotopy equivalent to  a wedge sum of copies of $S^4$'s and $S^7$'s. These computations suggest that the complexes $\mathcal{VR}(Q_m; 3)$ for $m\geq 5$ are homotopy equivalent to a wedge sum of spheres with different dimensions which are more complicated than that of the complexes $\mathcal{VR}(Q_m; r)$ with $r=0, 1, 2$. Shukla \cite{Shu22} subsequently confirmed the computational indications by proving that for $m\geq 5$, the reduced homology group $\tilde{H}_i(\mathcal{VR}(Q_m; 3))$ is nontrivial if and only if $i\in \{4, 7\}$.  We acquire the homotopy types of $\mathcal{VR}(Q_m; 3)$ by  proving that,  in Theorem~\ref{main}, for $m\geq 5$, $$\mathcal{VR}(Q_m; 3)\simeq (\bigvee_{2^{m-4}\cdot{m\choose 4}} S^7) \vee (\bigvee_{\sum_{i=4}^{m-1}2^{i-4}\cdot{i\choose 4}} S^4).$$

To describe the homotopy types of some Vietoris-Rips complexes, the authors in \cite{FN23} carried out induction on a total order '$\prec$' in $Q_m$ defined as following. For any $A, B\subseteq  [m]$,  we say $A\prec B$ if one of the followings holds:
\begin{itemize}
\item[i)] $|A|< |B|$;

\item[ii)] there is a $k\in\mathbb{N}$ such that $i_k<j_k$ and $i_{\ell}=j_{\ell}$ for any $\ell<k$, when $n=|A|=|B|$, $A=\{i_1, i_2, \ldots, i_n\}$ and $B=\{j_1,j_2, \ldots, j_n\}$ with $i_1<i_2<\ldots <i_n$ and $j_1<j_2<\ldots<j_n$.

\end{itemize}

For $n\leq m$,  we denote $\mathcal{F}_{n}^m$ be the collection of all subsets of $[m]$ with cardinality $n$. As mentioned in \cite{FN23}, the complex $\mathcal{VR}(\mathcal{F}_{n}^m; r)$ is closely related to the independence complex of Kneser graphs. Barmak in \cite{Bar13} showed how to use independence complexes to investigate the colorability of graphs, and in fact, he obtain lowers bounds for the chromatic number of a graph using a numerical homotopy invairant associated to its independence complex.  It is proved in \cite{FN23} that the complex $\mathcal{VR}(\mathcal{F}_n^m; 2)$ and $\mathcal{VR}(\mathcal{F}_n^m\cup \mathcal{F}_{n+1}^m; 2)$ are either contractible or  homotopy equivalent to a wedge sum of $S^2$'s; $\mathcal{VR}(\mathcal{F}_n^m\cup \mathcal{F}_{n+2}^m; 2)$ are either contractible or  homotopy equivalent to a wedge sum of $S^3$'s.    For any $A\subseteq [m]$, we denote $\mathcal{F}_{\prec A}=\{B: B\prec A \text{ and }B\subseteq [m]\}$ and $\mathcal{F}_{\preceq A}^m=\mathcal{F}_{\prec A}^m\cup \{A\}$. Also for the purpose of convenience, we represent $A=\{i_1, i_2, \ldots, i_n\}\subseteq [m]$ as $i_1i_2\cdots i_n$  when $i_1<i_2<\ldots<i_n$. For any $a=1, 2, \ldots, n$, we denote $i_1i_2\cdots \hat{i_a}\cdots i_n =i_1i_2\cdots i_n\setminus \{i_a\}$.  As an example, in the following contexts, $A=i_1$ means $A$ is a set consisting the number $i_1$, i.e. $A=\{i_1\}$. For concrete vertices, for example $A=\{2, 3, 4\}$, we still use the original notation to avoid confusions.  It is straightforward to see that the complexes $\mathcal{VR}(\mathcal{F}_{\leq 3}^m; 3)$ are cones, hence contractible, here $\mathcal{F}_{\leq 3}^m = \mathcal{F}_{0}^m\cup \mathcal{F}_{1}^m\cup \mathcal{F}_{2}^m \cup  \mathcal{F}_{3}^m$.

\medskip

The paper is organized in the following way. In Section~\ref{sc_sc}, we develop several techniques which are useful to study the homotopy types of Vietoris-Rips complexes. One of the techniques is that we establish a sufficient condition in Lemma~\ref{sc_homo_t} which guarantees that SC$_K(L)\simeq L$ for $L$ being a subcomplex of $K$, here SC$_K(L)=\bigcup_{v\in L}\text{st}_K(v)$. This gives a generalization of the conditions given in \cite{Bar13, FN23} and is heavily used in the following context.  In Section~\ref{mx_f1f2f3_sec}, we prove that the complex $\mathcal{VR}(\mathcal{F}_{1}^n\cup \mathcal{F}_{2}^n \cup  \mathcal{F}_{3}^n; 3)$ is homotopy equivalent to a wedge sum of $S^5$'s for $n\geq 4$. This is used to establish the homotopy type of the link of a vertex $A$ with $|A|=n\geq 4$ in the complex $\mathcal{VR}(\mathcal{F}_{\preceq A}^m; 3)$ (Lemma~\ref{lk_A_preceq}) which is a key discussion in Section~\ref{lk_A_sec}. The link of a vertex $A$ with $|A|=n\geq 4$ in the complex $\mathcal{VR}(\mathcal{F}_{\preceq A}^m; 3)$ is homotopy equivalent to a wedge sum of $S^6$'s and/or $S^3$'s. Then in Section~\ref{main_sec}, we use the the link of a vertex $A$  in  $\mathcal{VR}(\mathcal{F}_{\preceq A}^m; 3)$ to carry an induction on the total order $\prec$ on $Q_m$ which proves the main result (Theorem~\ref{main}). We conclude the paper with some open questions in Section~\ref{conclusion_sec}.

\section{Simpicial Complexes and Star clusters}\label{sc_sc}

\textbf{Topological Spaces and Wedge sums.} For any two topological spaces $X$ and $Y$, we write $X\simeq Y$ when they are homotopy equivalent. The wedge sum of $X$ and $Y$, denoted by $X\vee Y$, is the space obtained by gluing $X$ and $Y$ together at a single point. The homotopy type of $X\vee Y$ is independent of the choice of points if $X$ and $Y$ are connected CW complexes. For $k\ge 1$, $\vee_k X$ denotes the $k$-fold wedge sum of $X$.  We denote $\Sigma X$ to be the suspension of $X$ for any topological space $X$.   For any $k$-dimensional sphere $S^k$, $\Sigma S^k$ is homeomorphic to $S^{k+1}$. A function $f$ from $X$ to $Y$ is said to be null-homotopic if it is homotopic to a constant map.  And it is well-known that any mapping from $S^n$ to $S^m$ is null-homotopic when $n<m$. Hence inductively any mapping from a wedge sum of  $S^n$'s  to  a wedge sum of $S^m$'s is null-homotopic when $n<m$.

Any two metric spaces $(X, d_X)$ and $(Y, d_Y)$ is said to be isometric if there is a bijective distance-preserving map $f$ from $X$ to $Y$, i.e., $d_X(x_1, x_2)=d_Y(f(x_1), f(x_2))$ for any $x_1, x_2\in X$. Hence if $X$ and $Y$ are isometric, then it is straightforward to verify that  $\mathcal{VR}(X; r)$ is homeomorphic to $\mathcal{VR}(Y; r)$ for any $r\geq 0$.

\medskip

\textbf{Simplicial complexes.} A simplicial complex $K$ on a vertex set $V$ is a collection of subsets of $V$ such that: i) all singletons are in $K$; and ii) if $\sigma\in K$ and $\tau\subset \sigma$, then $\tau\in K$. For a complex $K$, we use $K^{(k)}$ to represent the $k$-skeleton of $K$ which is a subcomplex of $K$.  Therefore, we can use $K^{(0)}$ to represent the vertex set $V$.

For vertices $v_1, v_2, \ldots, v_{k+1}$ in a complex $K$,  if they span a simplex in $K$, then we denote the simplex to be $\{v_1, v_2, \ldots, v_{k+1}\}$ which is a $k$-simplex with boundary homotopy equivalent to $S^{k-1}$.   If $\sigma$ and $\tau$ are simplices in $K$ with $\sigma\subset \tau$, we say $\sigma$ is a face of $\tau$. And $\sigma$ is a proper face of $\tau$ if it is a face of $\tau$ and $\sigma\neq \tau$.    We say a simplex is a maximal simplex (or a facet) if it is not a face of any other simplex. For any complex $K$, we denote $M(K)$ to be the collection of maximal faces in $K$. Also for each simplex $\sigma$, we denote $K_\sigma$ to be the complex generated by $\sigma$, i.e. $K_\sigma$ contains $\sigma$ and all its faces.

We say that $L$ is a full subcomplex $K$ if it contains all the simplicies in $K$ spanned by the vertices in $L$, or we say $L$ is the induced complex set on the vertex set $L^{(0)}$.

%If $\sigma$ is a $k$-simplex and $K_\sigma$ generated by $\sigma$, then $K_\sigma^{(n)}$ is homotopy equivalent to a wedge sum of $k \choose n+1$-many of $S^{n}$ for any $n<k$.

A complex $K$ is \emph{clique} if  $\sigma\in K$ for each non-empty subset of vertices $\sigma$ with $\{v, w\}\in K$ for any $v, w\in \sigma$.
%For any graph $G=(V, E)$, we denote Cl$(G)$ to be the clique complex of $G$ whose vertex set is $V$ and Cl$(G)$ contains a finite subset $\sigma\subset V$ as a simplex if each pair of vertices in $\sigma$ forms an edge in $G$.
Note that the Vietoris-Rips complex over any metric space is clique by the definition. Also, if $Y$ is a subspace of a metric space $X$ and $r\geq 0$, then the complex $\mathcal{VR}(Y; r)$ is a clique full subcomplex of $\mathcal{VR}(X; r)$.

The join of two (disjoint) complexes $K$ and $K'$ is the complex $$K\ast K'=K\cup K' \cup \{\sigma\cup \sigma': \sigma\in K\text{ and }\sigma'\in K'\}.$$
The join of a contractible complex with another complex is always contractible. For simplicity, we will identify a simplicial complex with its geometric realization. More information about simplical complexes can be found in \cite{Hatcher, Munkres}

\medskip

The following result is an important technique to investigate the homotopy type of a complex by splitting it into two or more subcomplexes which is proved in \cite{GSS22}.

\begin{lemma}\label{cup_simp} The simplicial complex $K=K_1\cup K_2$ satisfies that the inclusion maps $\imath_1: K_1\cap K_2\rightarrow K_1$ and $\imath_2: K_1\cap K_2\rightarrow K_2$ are both null-homotopic. Then $$K\simeq K_1\vee K_2\vee\Sigma(K_1\cap K_2).$$
\end{lemma}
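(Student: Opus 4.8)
The plan is to prove Lemma~\ref{cup_simp} by analyzing the homotopy pushout associated to the decomposition $K = K_1 \cup K_2$. The starting point is the standard fact that since $K_1$ and $K_2$ are subcomplexes of $K$ covering it, and $K_1 \cap K_2$ is their (full) intersection, the square with $K_1 \cap K_2$ at the initial corner, $K_1$ and $K_2$ as the two intermediate terms, and $K$ as the final corner is a homotopy pushout square; equivalently, $K$ is homotopy equivalent to the double mapping cylinder (homotopy colimit) of $K_1 \leftarrow K_1 \cap K_2 \rightarrow K_2$. I would either cite this or give the quick mapping-cylinder argument: replace each inclusion by a cofibration by thickening to a mapping cylinder, and observe the resulting union is homotopy equivalent to $K$.

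Next I would exploit the hypothesis that both inclusion maps $\imath_1$ and $\imath_2$ are null-homotopic. Write $A = K_1 \cap K_2$. Since $\imath_1 \colon A \to K_1$ is null-homotopic, the mapping cylinder of $\imath_1$ deformation retracts, rel nothing, onto the mapping cone of $\imath_1$, which (because $\imath_1 \simeq \mathrm{const}$) is homotopy equivalent to $K_1 \vee \Sigma A$ — here one uses that the cofiber of a null-homotopic map $f \colon A \to X$ between well-pointed spaces is $X \vee \Sigma A$. Applying the same reasoning on the other side, the double mapping cylinder can be rewritten, step by step, as the union of these two cones glued along $A$. Concretely: collapsing the copy of $A$ inside the double mapping cylinder is a cofibration, so $K \simeq \mathrm{DMC}(K_1 \leftarrow A \rightarrow K_2)$ has the homotopy type of $(K_1 \cup_A CA) \cup_{CA} (CA \cup_A K_2)$ where the two cones on $A$ are joined along their common base; pushing the null-homotopies through identifies this with $K_1 \vee \Sigma A \vee K_2$. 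I would present this as: collapse one cone on $A$ to get $K_1 \vee \Sigma A$ on one side while leaving $K_2$ attached along $A$ on the other, then use the null-homotopy of $\imath_2$ to detach $K_2$ up to the wedge, producing the extra suspension coordinate only once.

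To keep the argument clean and avoid point-set subtleties, I would phrase it using the cofiber sequence $A \xrightarrow{\imath_1 \sqcup \imath_2} K_1 \sqcup K_2 \to K$ (the homotopy cofiber of the folding-type map), noting that $K_1 \sqcup K_2$ is not connected so I actually work with the pushout directly: $K \simeq K_1 \cup_A K_2$ as a homotopy pushout, and a homotopy pushout of $X \xleftarrow{f} A \xrightarrow{g} Y$ with both $f,g$ null-homotopic is $X \vee Y \vee \Sigma A$. This last statement is the crux and can be checked by a direct homotopy colimit manipulation: replace $f$ and $g$ by constant maps up to homotopy (homotopy pushouts are invariant under homotopy of the maps), so $K \simeq X \cup_{\mathrm{const}} (A \times I) \cup_{\mathrm{const}} Y$, and this explicit space visibly deformation retracts onto $X \vee \Sigma A \vee Y$ by sliding along the cylinder coordinate.

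The main obstacle I anticipate is not conceptual but bookkeeping: making sure the single suspension $\Sigma(K_1 \cap K_2)$ appears exactly once rather than twice (one might naively expect a $\Sigma A$ from each side), and handling basepoints/well-pointedness so the cofiber-of-null-map identification $\mathrm{cofib}(f) \simeq X \vee \Sigma A$ is legitimate — this is automatic for CW pairs, which is our setting, so I would simply remark that all spaces are CW complexes and inclusions of subcomplexes are cofibrations. The invariance of the homotopy pushout under replacing $f,g$ by homotopic (hence constant) maps is the one place I would state carefully, since that is what collapses the potential second suspension: once both legs are constant, the double mapping cylinder has a single ``suspension-like'' cylinder $A \times I$ joining the two ends, contributing just one $\Sigma A$.
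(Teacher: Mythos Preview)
Your argument is correct: the decomposition $K=K_1\cup K_2$ of a simplicial complex into subcomplexes is a homotopy pushout (since inclusions of subcomplexes are cofibrations), homotopy pushouts are invariant under homotopies of the legs, and once both legs are replaced by constant maps the double mapping cylinder is visibly $K_1$ and $K_2$ joined by an unreduced suspension of $K_1\cap K_2$ at its two cone points, which is homotopy equivalent to $K_1\vee K_2\vee\Sigma(K_1\cap K_2)$ for CW complexes. Your caution about the single $\Sigma A$ (not two) is exactly right, and the explanation via the cylinder $A\times I$ with both ends collapsed is the clean way to see it.

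As for comparison: the paper does not give its own proof of this lemma at all --- it simply records the statement and attributes it to Goyal--Shukla--Singh \cite{GSS22}. So there is no in-paper argument to compare against. Your homotopy-pushout proof is the standard one and is essentially what one finds in the cited source (and elsewhere in the literature); it is complete as written, modulo the routine basepoint/well-pointedness remarks you already flagged, which are automatic in the CW setting.
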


The star of a vertex $v$ in $K$ is st$_K(v)=\{\sigma: \sigma\cup \{v\}\in K\}$. Hence for any $v\in V$, st$_K(v)$ is contractible because it is a cone with the vertex $v$, namely $v\ast \text{lk}_K(v)$ where $\text{lk}_K(v)=\{\sigma: \sigma\cup \{v\}\in K\text{ and }v\notin \sigma\}$.  An easy corollary of Lemma~\ref{cup_simp} is that $K =K\setminus v\cup \text{st}_K(v) \simeq K\setminus v\vee (K\setminus v\cap \text{st}_K(v))$. Note that $K\setminus v\cap \text{st}_K(v)=\text{lk}_K(v)$. So we get the next lemma (see \cite{AA22}, Lemma 1). Here for any subcomplex $L$ of $K$, $K\setminus L$ denotes the induced complex on the vertext set $K^{(0)}\setminus L^{(0)}$; specifically, for any vertex $v\in K$, $K\setminus v$ is the induced complex on the vertex set $K^{(0)}\setminus \{v\}$.

\begin{lemma} \label{complex_add_1v} If $v$ is a vertex in $K$ with the inclusion map $\imath: \text{lk}_K(v)\rightarrow K$ being null-homotopic, then $K$ is homotopic to $K\setminus v\vee \Sigma (\text{lk}_K(v))$.

\end{lemma}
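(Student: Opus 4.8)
The plan is to realise $K$ as a union of two subcomplexes to which Lemma~\ref{cup_simp} applies directly. First I would write $K=(K\setminus v)\cup\text{st}_K(v)$: any simplex $\sigma\in K$ either does not contain $v$, in which case $\sigma\in K\setminus v$, or contains $v$, in which case $\sigma\in\text{st}_K(v)$ by definition of the star. Putting $K_1=K\setminus v$ and $K_2=\text{st}_K(v)$, I would then record the identification $K_1\cap K_2=\text{lk}_K(v)$ already noted just before the statement: a simplex lying in both $K_1$ and $K_2$ must avoid $v$ and yet stay a simplex of $K$ after adjoining $v$, which is exactly the defining condition for membership in $\text{lk}_K(v)$.

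Next I would verify the two null-homotopy hypotheses of Lemma~\ref{cup_simp} for the inclusions of $K_1\cap K_2=\text{lk}_K(v)$. The inclusion $\imath_2\colon\text{lk}_K(v)\hookrightarrow\text{st}_K(v)$ is null-homotopic for free, since $\text{st}_K(v)=v\ast\text{lk}_K(v)$ is a cone with apex $v$, hence contractible, and every map into a contractible space is null-homotopic. The inclusion $\imath_1\colon\text{lk}_K(v)\hookrightarrow K\setminus v$ is null-homotopic by the hypothesis of the lemma. Lemma~\ref{cup_simp} then gives
\[
K\;\simeq\;K_1\vee K_2\vee\Sigma(K_1\cap K_2)\;=\;(K\setminus v)\vee\text{st}_K(v)\vee\Sigma\!\left(\text{lk}_K(v)\right).
\]
Since $\text{st}_K(v)$ is contractible, it may be collapsed to the wedge point without altering the homotopy type, so $K\simeq(K\setminus v)\vee\Sigma(\text{lk}_K(v))$, which is the claim.

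There is no deep obstacle here: once the decomposition $K=(K\setminus v)\cup\text{st}_K(v)$ is written down and its intersection is identified with $\text{lk}_K(v)$, the result is a bookkeeping application of Lemma~\ref{cup_simp}. The only points that need attention are the two null-homotopy conditions of that lemma --- the one for $\imath_2$ coming for free from the contractibility of the cone $\text{st}_K(v)$, which simultaneously justifies discarding the $\text{st}_K(v)$ summand from the final wedge, and the one for $\imath_1$ being supplied by the null-homotopy hypothesis on the inclusion of the link. I would also take care with the convention that $K\setminus v$ denotes the induced subcomplex on $K^{(0)}\setminus\{v\}$, so that indeed $K=(K\setminus v)\cup\text{st}_K(v)$ and $(K\setminus v)\cap\text{st}_K(v)=\text{lk}_K(v)$ as sets of simplices.
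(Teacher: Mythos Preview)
Your proof is correct and follows exactly the paper's approach: write $K=(K\setminus v)\cup\text{st}_K(v)$, identify the intersection with $\text{lk}_K(v)$, and apply Lemma~\ref{cup_simp}, discarding the contractible $\text{st}_K(v)$ summand at the end.

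One caveat worth making explicit (it is a wrinkle in the paper's own statement rather than in your argument): the hypothesis as written concerns the inclusion $\text{lk}_K(v)\hookrightarrow K$, but that map is \emph{always} null-homotopic, since the cone $\text{st}_K(v)=v\ast\text{lk}_K(v)$ already sits inside $K$. What Lemma~\ref{cup_simp} actually requires, and what you invoke for $\imath_1$, is null-homotopy of $\text{lk}_K(v)\hookrightarrow K\setminus v$; this is also the condition that is checked in every subsequent application of the lemma in the paper. Your sentence ``$\imath_1$ is null-homotopic by the hypothesis of the lemma'' silently treats these as the same, so it would be cleaner to read the hypothesis as referring to $K\setminus v$ from the outset.
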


%\begin{lemma} Let $K=K_1\cup K_2$ be a connected simplicial complex and $L$ be a subcomplex in $K$ such that $L\cap K_1$ and $L\cap K_2$ are contractible in $K$, $L\cap(K_1\cap K_2)$ is contractible. Then $L$ is contractible in $K$. \end{lemma}

%\begin{proof} Notice that $L=(L\cap K_1)\cup (L\cap K_2)$, hence by Lemma~\ref{cup_simp}, $L$ is homoptopic to $(L\cap K_1)\vee (L\cap K_2)$ because $(L\cap K_1)\cap (L\cap K_2)=L\cap(K_1\cap K_2)$ is contractible. It is clear that  \end{proof}
Hence it is important to decide the inclusion map of a subcomplex $L$ of $K$ is null-homotopic. In general, the mapping from $S^m$ to $S^n$ needs not to be null-homotopic when $m>n$. The next lemma provides a tool to show such inclusion map is null-homotopic and the proof is straightforward.
\begin{lemma} \label{null_homo} Let $L$ be a subcomplex of $K$. The inclusion map $\imath: L\rightarrow K$ is null-homotopic if there is a subcomplex $L_1$ of $K$ such that $L\subseteq L_1$ and the inclusion map $\imath_1: L_1\rightarrow K$ is null-homotopic. \end{lemma}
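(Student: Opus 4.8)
The plan is to use the elementary fact that null-homotopy is preserved under precomposition with an arbitrary continuous map, applied to the factorization of $\imath$ through $L_1$. First I would note that because $L$, $L_1$, and $K$ are subcomplexes with $L\subseteq L_1\subseteq K$, on geometric realizations the three inclusions are honest inclusions of subspaces, and the triangle of inclusions commutes strictly: writing $j\colon L\to L_1$ for the inclusion of $L$ into $L_1$, we have $\imath = \imath_1\circ j$.

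The key step is then immediate. Since $\imath_1\colon L_1\to K$ is null-homotopic, fix a homotopy $H\colon L_1\times[0,1]\to K$ with $H(\,\cdot\,,0)=\imath_1$ and $H(\,\cdot\,,1)$ equal to the constant map at some point $p\in K$. The composite $H\circ(j\times\mathrm{id}_{[0,1]})\colon L\times[0,1]\to K$ is continuous, equals $\imath_1\circ j=\imath$ at time $0$, and equals the constant map at $p$ at time $1$. Hence $\imath$ is null-homotopic, which is exactly the assertion of the lemma.

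If desired, I would append a one-line remark that the same computation shows, more generally, that the hypothesis can be relaxed: it suffices to have any continuous (in our applications, simplicial) map $f\colon L\to L_1$ with $\imath_1\circ f = \imath$, since only the commuting triangle and the null-homotopy of $\imath_1$ are used; the stated special case (an inclusion of subcomplexes) is the only form needed later. There is essentially no obstacle here — the entire content is the bookkeeping of the factorization $\imath=\imath_1\circ j$ together with the standard observation that $H\circ(j\times\mathrm{id})$ is again a homotopy — which is why the excerpt simply records that the proof is straightforward.
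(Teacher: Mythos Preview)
Your argument is correct and is exactly the straightforward verification the paper alludes to (the paper omits the proof entirely, noting only that it is straightforward). The factorization $\imath=\imath_1\circ j$ together with precomposing the null-homotopy of $\imath_1$ by $j\times\mathrm{id}_{[0,1]}$ is the intended one-line justification.
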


Barmak in \cite{Bar13} introduced a useful concept, the star cluster of a simplex, to understand the homotopy type of  some complexes. He showed that the star cluster of a simplex $\sigma$ in $K$, SC$_K(\sigma)$, is contractible, i.e. homotopy equivalent to $\sigma$; and he used this tool to answer a question arisen from works of Engstr\"{o}m and Jonsson and investigated lots of examples appearing from literature about independence complex of graphs. The authors in \cite{FN23} generalized this concept to any subcomplex $L$ of $K$ by defining  the \emph{star cluster} of $L$ in $K$ as the subcomplex  $$\text{SC}_K(L)=\bigcup_{v\in L} \text{st}_K(v).$$

In general, SC$_K(L)$ is not homotopy equivalent ot $L$. Example can be found in \cite{FN23}. It is natural to ask for the conditions under which $\text{SC}_K(L)$ is homotopy equivalent to $L$.  In \cite{FN23}, the authors proved that $\text{SC}_K(L)
\simeq L$ if $L$ is a clique subcomplex of a clique complex $K$ such that
\begin{itemize}

\item[$(\ast)$]for any pair of vertices $v, w\in L$, the edge $\{v, w\}\in L$ given that $(\text{st}_K(v)\cap \text{st}_K(w))\setminus L\neq \emptyset$.

\end{itemize}

Next, we'll obtain a weaker condition (Lemma~\ref{sc_homo_t}) that implies $\text{SC}_K(L)\simeq L$. This is a key tool in the following context to understand the homomoty types of Vietoris-Rips complexes.

Next example shows that SC$_K(L)\simeq L$ even when sometimes they fail the condition $(\ast)$.

\begin{exam} Let $K$ be the complex with maximal faces  $$M(K)=\{\{v_1, v_2\}, \{v_1, v_3\}, \{v_2, v_4, v_5\}, \{v_3, v_4, v_5\}\}$$ and $L$ be the subcomlex with $$M(L)=\{\{v_1, v_2\}, \{v_1, v_3\}, \{v_2, v_4\}, \{v_3, v_4\}\}.$$ Clearly both $K$ and $L$ are clique; and $L\simeq K\simeq S^1$.

\begin{center}
\begin{tikzpicture}[scale=3]

\draw[fill=yellow!10] (0.5,0) -- (1,0.5) -- (1.75, 0.5) -- cycle;

\draw[fill=yellow!10] (1,0.5) --(0.5,1) -- (1.75, 0.5) -- cycle;
\draw[thick] (0.5,0) -- (0,0.5) -- (0.5,1) -- (1,0.5)  -- cycle;
\draw[thick] (0.5,0) -- (1,0.5) -- (1.75, 0.5)-- cycle;

\draw[thick] (1,0.5) --(0.5,1) -- (1.75, 0.5) --cycle;
\filldraw[black] (0,0.5) circle (0.5pt) node[left]{$v_1$};
\filldraw[black] (0.5,0) circle (0.5pt) node[below]{$v_2$};
\filldraw[black] (1,0.5) circle (0.5pt) node[left]{$v_4$};
\filldraw[black] (0.5,1) circle (0.5pt) node[above]{$v_3$};
\filldraw[black] (1.75,0.5) circle (0.5pt) node[right]{$v_5$};

\fill[yellow] (0.5,0) -- (1,0.5) -- (1.75, 0.5) -- cycle;

\fill[yellow] (1,0.5) --(0.5,1) -- (1.75, 0.5) -- cycle;

\end{tikzpicture}
\end{center}

 Notice that st$_K(v_2)\cap \text{st}_K(v_3)\setminus L\neq \emptyset$, but $\{v_2, v_3\}\notin L$. So this pair of complexes $K$ and $L$ doesn't satisfy condition $(\ast)$.

 \end{exam}
 %We need the following result to prove one of  the key lemmas, Lemma~\ref{sc_homo_t}.

%\begin{lemma}\label{quo_cone} Let $K$ be a clique complex and $L$ a contractible full subcomplex of $K$ such that for any simplex $\tau$ in $K\setminus L$ and any vertex $v\in L$,  $\{v\}\cup \tau \in K$. Then the complex $K$ is contractible. \end{lemma}

%\begin{proof} Let $L'=K\setminus L$. Then for any simplices $\sigma\in L$ and $\tau\in L'$, $\sigma\cup \tau\in K$ by the assumption and the fact that $K$ is clique. Hence $K=L\ast L'$; therefore it is contractible because $L$ is contractible. \end{proof}

%\begin{proof} Since $L$ is a contractible subcomplex of $K$, the complex $K$ is homotopy equivalent to the quotient $K/L$. The quotient $K/L$ is a cone because for any simplex $\sigma$ in $K$ there is a vertex $v\in L$ such that $\{v\}\cup \sigma$ is also in $K$. Hence $X/L$ is contractible. \end{proof}

Fix a simplex $s$ in a simplicial complex $K$. The star of $s$ in $K$ is the collection of simplices in $K$ whose union with $s$ is in $K$, i.e. st$_K(s)=\{\tau: \tau\cup s \in K\}$. And the link of $s$ in $K$, lk$_K(s)$, is the collection of simplices which are disjoint from $s$ and whose union with $s$ are in $K$, i.e.,  lk$_K(s)=\{\tau: \tau\cap s=\emptyset\text{ and }\tau\cup s\in K\}$. Clearly  st$_K(s)$ is a cone, hence contractible. Let $L$ be subcomplex of $K$ obtained through removing $s$ and all the simplices containing $s$ as a face. Then clearly $K=L\cup \text{st}_K(s)$; and $L\cap \text{st}_K(s)=\{\tau\in \text{st}_K(s): \tau\cap s \text{ is a proper face of }s\}=\text{lk}_K(s)\ast \{\sigma: \sigma \text{ is a proper face of }s\}$. Also if the inclusion map from $L\cap \text{st}_K(s)$ to $L$ is null-homotopic, then $K$ is homotopy equaivalent to $L\vee \Sigma(L\cap \text{st}_K(s))$ by Lemma~\ref{cup_simp}.

\begin{lemma}\label{sc_homo_t} Let $L$ be a full  subcomplex of a clique complex $K$. Suppose that for any simplex  $s\in \text{SC}_K(L)\setminus L$, lk$_K(s)\cap L$ is contractible.

Fix a set of vertices $\{v_1, v_2, \ldots, v_\ell\}\subseteq L$ and define $L'=L\cup (\bigcup_{i=1}^\ell\text{st}_K(v_i))$. Then the complex $L'$ is homotopy equivalent $L$.

In particular, the star cluster $\text{SC}_K(L)$ is homotopy equivalent to $L$.
%\begin{itemize}
%\item[i)] the set of vertices of $L_k$ is  $L^{(0)}\cup \{v_1, v_2, \ldots, v_k\}$;

%\item[ii)] a subset $\sigma$ of $L^{(0)}\cup \{v_1, v_2, \ldots, v_k\}$ is a simplex in $L_k$ if and only if $\sigma\in \text{st}_K(v)$ for some vertex $v\in L$.
%\end{itemize}

%Then the complex $L_k$ . Specifically, the star cluster $\text{SC}_K(L)$ is homotopy equivalent to $L$.
\end{lemma}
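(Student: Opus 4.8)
The plan is to induct on $N=|(L')^{(0)}\setminus L^{(0)}|$, the number of vertices of $L'$ that are not vertices of $L$. If $N=0$ then, since $L$ is full in $K$ and every simplex of each $\text{st}_K(v_i)$ already lies in $K$, the complex $L'$ is exactly the full subcomplex of $K$ on $L^{(0)}$, which is $L$; so there is nothing to prove.

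For the inductive step, fix a vertex $w\in (L')^{(0)}\setminus L^{(0)}$, let $K_1$ be the full subcomplex of $K$ on $K^{(0)}\setminus\{w\}$, and let $L'_1$ be the full subcomplex of $L'$ on $(L')^{(0)}\setminus\{w\}$. I would first record the bookkeeping: $K_1$ is still a clique complex, $L$ is still a full subcomplex of $K_1$ (because $w\notin L^{(0)}$), and $L'_1=L\cup\bigcup_{i=1}^{\ell}\text{st}_{K_1}(v_i)$, since deleting $w$ from $K$ removes from each $\text{st}_K(v_i)$ exactly the simplices containing $w$. The hypothesis is also inherited: if $s\in\text{SC}_{K_1}(L)\setminus L$ then $w\notin s$, and $w$ is a vertex of no simplex of $L$, so $\text{lk}_{K_1}(s)\cap L=\text{lk}_K(s)\cap L$, which is contractible by assumption. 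Since $|(L'_1)^{(0)}\setminus L^{(0)}|=N-1$, the inductive hypothesis gives $L'_1\simeq L$, so it remains to prove $L'\simeq L'_1$.

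Write $L'=L'_1\cup\text{st}_{L'}(w)$; the star $\text{st}_{L'}(w)$ is a cone and $L'_1\cap\text{st}_{L'}(w)=\text{lk}_{L'}(w)$, so by Lemma~\ref{cup_simp} it suffices to show that $\text{lk}_{L'}(w)$ is contractible, for then $L'\simeq L'_1\vee\Sigma(\text{lk}_{L'}(w))\simeq L'_1$. This is where the hypothesis is used. A simplex $\rho$ with $w\notin\rho$ lies in $\text{lk}_{L'}(w)$ precisely when $\rho\cup\{w,v_i\}\in K$ for some $i$; hence, writing $W=\{i:\{w,v_i\}\in K\}$, one gets $\text{lk}_{L'}(w)=\bigcup_{i\in W}\text{st}_{\text{lk}_{L'}(w)}(v_i)$ --- that is, $\text{lk}_{L'}(w)$ is the star cluster of the vertex set $\{v_i:i\in W\}$ inside itself. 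If $\{v_i:i\in W\}$ spans a simplex there, this is contractible by Barmak's theorem. In general, I would run the Nerve Lemma on the cover $\{\text{st}_{\text{lk}_{L'}(w)}(v_i)\}_{i\in W}$: using that $K$ is clique and that no vertex is its own neighbour, each nonempty finite intersection of these stars is again a full subcomplex of $K$, and when $\{v_i:i\in W'\}$ is itself a clique it equals $\text{lk}_K(s)$ for $s=\{w\}\cup\{v_i:i\in W'\}$, a simplex lying in $\text{SC}_K(L)\setminus L$. The contractibility of these intersections is then settled by a nested repetition of the same star-cluster-plus-Nerve analysis, whose base cases are star clusters of honest simplices (Barmak) and the sets $\text{lk}_K(s)\cap L$ that the hypothesis declares contractible.

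Finally, the "in particular" assertion is the special case $\{v_1,\dots,v_\ell\}=L^{(0)}$, for which $L'=L\cup\bigcup_{v\in L^{(0)}}\text{st}_K(v)=\text{SC}_K(L)$ since $L\subseteq\text{SC}_K(L)$. I expect the genuinely hard step to be the contractibility of $\text{lk}_{L'}(w)$ in the third paragraph --- in particular, organizing the nested Nerve-Lemma reductions so that every contractibility claim invoked collapses either to Barmak's theorem or to an instance of the hypothesis, while handling the fact that the auxiliary complexes arising along the way need not themselves be clique complexes.
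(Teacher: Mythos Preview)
Your induction on vertices is natural, but the contractibility of $\text{lk}_{L'}(w)$ is a genuine gap, and the Nerve-Lemma sketch does not close it. Two concrete problems. First, your identification is off: when $\{v_i:i\in W'\}$ is a clique and $s=\{w\}\cup\{v_i:i\in W'\}$, the intersection $\bigcap_{i\in W'}\text{st}_{\text{lk}_{L'}(w)}(v_i)$ is not $\text{lk}_K(s)$ but rather $K_{\{v_i:i\in W'\}}\ast\text{lk}_K(s)$; this is still a cone, so contractible, but the hypothesis about $\text{lk}_K(s)\cap L$ is never actually invoked here. Second, and more seriously, when $\{v_i:i\in W'\}$ is \emph{not} a clique the intersection can still be nonempty (the conditions $\rho\cup\{v_i,w\}\in K$ for each $i$ do not force the $v_i$'s to be pairwise adjacent), and then no $v_i$ need lie in the intersection, so there is no obvious cone point and no reduction to $\text{lk}_K(s)\cap L$. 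You also never say why the nerve of the cover is contractible. Since $\text{lk}_{L'}(w)$ is not clique, the ``nested repetition'' you propose cannot simply re-invoke the lemma on a smaller instance.

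The paper avoids this by inducting not on vertices but on \emph{simplices}: list all simplices $s_1,\dots,s_k$ of $L'\setminus L$ in non-decreasing dimension and attach them one at a time. The point of the dimension ordering is that when you attach $s_a$, every simplex of $\text{lk}_{L_a}(s_a)$ must lie in $L$ (any simplex outside $L$ joined to $s_a$ would have dimension $\ge\dim s_a$ and so has not yet been attached). Hence $\text{lk}_{L_a}(s_a)=\text{lk}_K(s_a)\cap L$, which is contractible directly by hypothesis, and $L_{a-1}\cap\text{st}_{L_a}(s_a)=(\text{lk}_K(s_a)\cap L)\ast\partial s_a$ is contractible as a join with a contractible factor. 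Lemma~\ref{cup_simp} then gives $L_a\simeq L_{a-1}$. This uses the hypothesis exactly once per step with no nerve argument at all; if you want to keep your vertex-by-vertex framing, you would still need this simplex-by-simplex trick inside $\text{st}_{L'}(w)$ to handle $\text{lk}_{L'}(w)$.
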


\begin{proof} Note that $L$ is a full-subcomplex of $K$. So if $(L')^{(0)}\setminus L=\emptyset$, then $L'=L$.

We list all the simplices in $L'\setminus L$ as $\{s_1, s_2, \ldots, s_k\}$ such that $\dim(s_a)\leq \dim(s_{a'})$ if $1\leq a\leq a'\leq k$.  Hence for each $s_a$ in $\{s_1, s_2, \ldots, s_k\}$, the proper faces of $s_a$ appear before $s_a$ in the list; hence $\{s_1, s_2, \ldots, s_a\}$ is also a simplicial complex, denoted by $H_a$. Then, we define $L_a$ to be the complex containing $L$ as a subcomplex and all the simplices in $K$ whose intersections with $H_a$ are also in $H_a$, i.e.,  $$L_a=\{\sigma\in K: \sigma \in L \text{ or } \sigma\cap (H_a)^{(0)}\in H_a \}.$$

Then, it is straightforward to verify that $L' = L_k$. Next, we'll show that for each $a\leq k$, $L_a\simeq L$ by induction. Notice that $s_1$ is a $0$-simplex, i.e. a vertex,  in $\text{SC}_K(L)\setminus L$. So by Lemma~\ref{complex_add_1v},  $L_1\simeq L_1\setminus s_1\vee \text{lk}_{L_1}(s_1)$. Note that $L_1\setminus s_1=L$, $\text{lk}_{L_1}(s_1) = \text{lk}_{K}(s_1)\cap L$ which is contractible by the assumption. Therefore, $L_1\simeq L$.

%Let $\{v_1, v_2, \ldots, v_k\}$ be a collection of vertices in $\text{SC}_K(L)\setminus L$. For any $\ell=1, 2, \ldots, k$, let $L_\ell$ be the subcomplex of $K$ wither vertices $L^{(0)}\cup \{v_1, v_2, \ldots, v_\ell\}$ such that $L_\ell$ contains all the simplices of $L$ and  a subset $\sigma$ of $\{v_1, v_2, \ldots, v_\ell\}$ is a simplex in $L_\ell$ when $\sigma\in \text{st}_K(v)$ for some vertex $v\in L$. By Lemma~\ref{complex_add_1v},  $L_1\simeq L_1\setminus v_1\vee \text{lk}_{L_1}(v_1)$. Note that $L_1\setminus v_1=L$, $\text{lk}_{L_1}(v_1) = \text{lk}_{K}(v_1)\cap L$ which is contractible by the assumption. Therefore, $L_1\simeq L$.

Now for the purpose of induction, we assume that $L_{a-1}\simeq L$. By the construction, the simplices in $L_a$ not in $L_{a-1}$  are the ones in $K$ whose intersection with $H_{a}^{(0)}$ are exactly $s_a$, i.e. $L_{a-1}$ is a subcomplex of $L_a$ obtained by removing all the simplices containing $s_a$. Hence $L_a=L_{a-1}\cup \text{st}_{L_a}(s_a)$.  We'll show that $L_{a-1}\cap \text{st}_{L_a}(s_a)$ is contractible which,  by the induction assumption and Lemma~\ref{cup_simp}, implies that

$$L_a\simeq L_{a-1}\vee \Sigma(L_{a-1}\cap \text{st}_{L_a}(s_a)) \simeq L_{a-1}\simeq L.$$

%Next, we will show that $L_{a-1}\cap \text{st}_{L_a}(s_a)$ is contractible.
Notice that all the proper faces of $\sigma$ appear before $s_a$ in the list; hence as mentioned above,  $L_{a-1}\cap \text{st}_{L_a}(s_a)=\text{lk}_{L_a}(s_a)\ast \{\sigma: \sigma \text{ is a proper face of }s_a\}$. Since $\dim(s_{a'})\leq \dim(s_a)$, $s_{a'}\notin \text{st}_{L_a}(s_a)$, hence lk$_{L_a}(s_a)\subseteq L$; therefore, lk$_{L_a}(s_a)=L\cap \text{lk}_K(s_a)$ which is contractible by the assumption. Therefore $L_{a-1}\cap \text{st}_{L_a}(s_a)$ is a join of a contractible complex with another complex, hence it is contractible.

Inductively $L'=L_k\simeq L_{k-1}\simeq \cdots L_1\simeq L$. \end{proof}
%Divide the vertices of lk$_{L_{\ell+1}}(v_{\ell+1})$ into two collections with $V_1$ containing the ones in $L$ and $V_2$ containing the ones not in $L$, i.e. $V_2\subseteq \{v_1, \ldots, v_\ell\}$. Therefore lk$_{L_{\ell+1}}(v_{\ell+1})^{(0)}=V_1\cup V_2$.   By definition, the full subcomplex of lk$_{L_{\ell+1}}(v_{\ell+1})$ with vertices set $V_1$ is $\text{lk}_K(v_{\ell+1})\cap L$, hence contractible by the assumption.

%For any simplex $\sigma$ with vertices in $V_2$, $\sigma\cup \{v_{\ell+1}\}\in L_{\ell+1}$, Hence by condition ii), $\sigma\cup \{v_{\ell+1}\}\in \text{st}_K (v)$ for some $v\in L$. Therefore,  $\sigma\cup \{v_{\ell+1}\}\cup \{v\}$ is in $K$, and also in $L_{\ell+1}$ by condition ii). So   $\sigma\cup \{v\}\in \text{lk}_{L_{\ell+1}}(v_{\ell+1})$ and $v$ is in $V_1$. Using Lemma~\ref{quo_cone}, we get that lk$_{L_{\ell+1}}(v_{\ell+1})$ is contractible because as mentioned above the induced subcomplex on $V_1$ from lk$_{K_{\ell+1}}(v_{\ell+1})$ is contractible. This finishes the proof.

Hence, the following corollary is a direct implication of Lemma~\ref{sc_homo_t}.

\begin{cor}\label{sc_homo_cor} Let $L$ be a full subcomplex of a clique complex $K$. Suppose that lk$_K(s)\cap L$ is either contractible or empty for any simplex $s$ with $s\cap L^{(0)}=\emptyset$. Then the complex SC$_K(L)$ is homotopy equivalent to $L$. \end{cor}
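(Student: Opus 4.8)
The statement to prove is Corollary~\ref{sc_homo_cor}, so the plan is fairly short: deduce it from Lemma~\ref{sc_homo_t}.

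\medskip

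\textbf{Proof proposal.} The plan is to verify that the hypothesis of Corollary~\ref{sc_homo_cor} is a special case of the hypothesis of Lemma~\ref{sc_homo_t}, applied with $\{v_1,\dots,v_\ell\}$ taken to be \emph{all} the vertices of $L$, so that $L' = L\cup\bigcup_{v\in L^{(0)}}\mathrm{st}_K(v) = \mathrm{SC}_K(L)$. First I would fix an arbitrary simplex $s\in \mathrm{SC}_K(L)\setminus L$ and argue that $s\cap L^{(0)}=\emptyset$: indeed, since $L$ is a \emph{full} subcomplex of the clique complex $K$, if every vertex of $s$ lay in $L^{(0)}$ then the clique property would force $s\in L$, contradicting $s\notin L$. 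Wait — this needs a little care, since $s$ could have \emph{some} but not all vertices in $L$. So instead the right move is: the hypothesis of Corollary~\ref{sc_homo_cor} controls $\mathrm{lk}_K(s)\cap L$ only for $s$ disjoint from $L^{(0)}$, whereas Lemma~\ref{sc_homo_t} needs it for every $s\in\mathrm{SC}_K(L)\setminus L$. So the key step is to reduce the general case to the disjoint case.

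\medskip

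The reduction I would use: write $s = s_L \cup s_0$ where $s_L = s\cap L^{(0)}$ and $s_0 = s\setminus L^{(0)}$, with $s_0\neq\emptyset$ since $s\notin L$ and $L$ is full. Then $\mathrm{lk}_K(s)\cap L = \mathrm{lk}_K(s_L\cup s_0)\cap L$. Since $K$ is a clique complex, $\mathrm{lk}_K(s_L\cup s_0) = \mathrm{lk}_K(s_L)\cap\mathrm{lk}_K(s_0)$, and intersecting with $L$ gives $\big(\mathrm{lk}_K(s_0)\cap L\big)\cap \mathrm{lk}_K(s_L)$. Now $s_L\in L$, and one checks that $\mathrm{lk}_K(s_L)\cap L = \mathrm{lk}_L(s_L)$; moreover, because $L$ is full and clique, $\mathrm{lk}_K(s_0)\cap L$ is itself a full clique subcomplex, and intersecting a full subcomplex with the closed star (equivalently, taking the link of a simplex inside it) preserves contractibility in the relevant situations — more precisely, $\mathrm{lk}_K(s)\cap L = \mathrm{lk}_{\,\mathrm{lk}_K(s_0)\cap L}(s_L)$, the link of the simplex $s_L$ inside the complex $M:=\mathrm{lk}_K(s_0)\cap L$, which by hypothesis is contractible (since $s_0$ is disjoint from $L^{(0)}$). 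The content of the reduction is then the standard fact that in a \emph{clique} complex, the link of any simplex in a contractible clique complex is contractible: since $M$ is clique and contractible, and $s_L$ is a simplex of $M$, the link $\mathrm{lk}_M(s_L)$ is contractible. Actually this last assertion is false in general (contractibility of a clique complex does not pass to links of simplices — cones are a counterexample only superficially), so I would instead avoid it entirely and note that Lemma~\ref{sc_homo_t} is itself proved by an induction over simplices of $\mathrm{SC}_K(L)\setminus L$ ordered by dimension, and that exact induction only ever \emph{uses} the contractibility of $\mathrm{lk}_K(s)\cap L$ through the vertices it contracts; re-examining that proof, the hypothesis "$\mathrm{lk}_K(s)\cap L$ contractible for all $s\in\mathrm{SC}_K(L)\setminus L$" can be weakened, but for the corollary it is cleanest to simply invoke Lemma~\ref{sc_homo_t} after checking its hypothesis directly.

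\medskip

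So the honest short route, and the one I would write, is: apply Lemma~\ref{sc_homo_t} with $\ell = |L^{(0)}|$ and $\{v_1,\dots,v_\ell\} = L^{(0)}$, giving $L' = \mathrm{SC}_K(L)$. To apply it I must check: for every $s\in\mathrm{SC}_K(L)\setminus L$, the complex $\mathrm{lk}_K(s)\cap L$ is contractible. Given such $s$, let $t = s\setminus L^{(0)}$, which is nonempty (else $s\subseteq L^{(0)}$ and fullness plus cliqueness of $L$ would give $s\in L$). Then $t$ is a simplex of $K$ with $t\cap L^{(0)}=\emptyset$, so by the hypothesis of the corollary $\mathrm{lk}_K(t)\cap L$ is contractible or empty; if it were empty then $s$, which contains $t$ and has its remaining vertices in $L$, could not lie in $\mathrm{SC}_K(L)$ (every simplex of $\mathrm{SC}_K(L)$ has a vertex $v\in L$ with $s\in\mathrm{st}_K(v)$, forcing $\mathrm{lk}_K(t)\cap L\ni s\setminus t$ nonempty, hence nonempty), so it is contractible. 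Finally $\mathrm{lk}_K(s)\cap L$ is the link, inside the contractible clique complex $N := \mathrm{lk}_K(t)\cap L$, of the face $s\cap L^{(0)}$ — and here I would invoke that in the proof of Lemma~\ref{sc_homo_t} only this weaker contractibility is needed, or alternatively observe that the hypothesis of the corollary is exactly the $\ell$-independent statement that makes $L_a\simeq L$ go through. The main obstacle is precisely this last point: correctly matching "link of a simplex disjoint from $L$" in the corollary with "link of an arbitrary simplex of $\mathrm{SC}_K(L)\setminus L$" in the lemma; I expect the cleanest fix is to observe that every simplex $s\in\mathrm{SC}_K(L)\setminus L$ satisfies $\mathrm{lk}_K(s)\cap L = \mathrm{lk}_{N}(s\cap L^{(0)})$ with $N = \mathrm{lk}_K(s\setminus L^{(0)})\cap L$ contractible, and that $s\cap L^{(0)}$ together with $N$ contractible forces $\mathrm{lk}_N(s\cap L^{(0)})$ contractible whenever $N$ is a \emph{full} clique subcomplex and $s\cap L^{(0)}$ is a simplex of $N$ lying in a cone point structure — since in fact $N$ being a clique complex and contractible does \emph{not} suffice, the real resolution is that the hypothesis "for all $s$ disjoint from $L$" already implies, via the same dimension-ordered induction, that $\mathrm{SC}_K(L) \simeq L$, so the corollary is proved by running the proof of Lemma~\ref{sc_homo_t} verbatim and noting each contractibility invoked there is of the form $\mathrm{lk}_K(s_a)\cap L$ with $s_a\notin L$ — and one checks $s_a$ may be replaced by $s_a\setminus L^{(0)}$ without changing $\mathrm{lk}_K(s_a)\cap L\cap(\text{vertices not in }L)$, closing the gap.
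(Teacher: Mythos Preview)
The paper gives no separate proof of this corollary; it simply says it is a ``direct implication of Lemma~\ref{sc_homo_t}.'' Your proposal is much longer than that, and the reason is that you have spotted a genuine subtlety: Lemma~\ref{sc_homo_t} as \emph{stated} requires $\mathrm{lk}_K(s)\cap L$ to be contractible for every $s\in\mathrm{SC}_K(L)\setminus L$, including simplices $s$ with some vertices in $L^{(0)}$, whereas the corollary only hypothesizes this for $s$ disjoint from $L^{(0)}$. You correctly discover that the implication ``corollary hypothesis $\Rightarrow$ lemma hypothesis (as stated)'' does \emph{not} hold in general --- your own observation that contractibility of a clique complex $N$ need not pass to $\mathrm{lk}_N(s_L)$ is exactly why.

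The resolution, however, is much cleaner than your meandering attempts. Look again at the \emph{proof} of Lemma~\ref{sc_homo_t}: the simplices $s_1,\dots,s_k$ listed there must all have vertex sets contained in $(L')^{(0)}\setminus L^{(0)}$. This is forced by the claim that $H_a=\{s_1,\dots,s_a\}$ is a simplicial complex (a proper face of $s_a$ lying in $L$ would not appear in the list), and it is also what makes the key identification $\mathrm{lk}_{L_a}(s_a)=\mathrm{lk}_K(s_a)\cap L$ work. Thus the only places the contractibility hypothesis is actually \emph{invoked} in that proof are for simplices $s_a$ with $s_a\cap L^{(0)}=\emptyset$ --- precisely what the corollary supplies (the ``or empty'' clause is harmless, since any such $s_a$ in $\mathrm{SC}_K(L)$ has $\mathrm{lk}_K(s_a)\cap L\neq\emptyset$). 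That is the one-line deduction the paper has in mind.

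Your write-up circles this point several times but never states it cleanly, and your final sentence --- that ``$s_a$ may be replaced by $s_a\setminus L^{(0)}$ without changing $\mathrm{lk}_K(s_a)\cap L\cap(\text{vertices not in }L)$'' --- is not a meaningful statement (the last intersection is empty). Drop the failed reduction via $\mathrm{lk}_N(s_L)$ entirely and replace the whole proposal with the single observation above.
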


 There are different techniques to describe the homotopy type of $K$. One of them is to start from the a subcomplex $L$ of $K$ and list the maximal faces $\{\sigma_1, \sigma_2, \ldots, \sigma_a\}$ in $K$ which misses $L$. Then it is straightforward to verify as in \cite{FN23} that $K=\text{SC}_K(L)\cup (\bigcup \{\sigma_i: i\leq a\})$. So we could apply Lemma~\ref{cup_simp} to inductively describe $\text{SC}_K(L)\cup (\bigcup \{\sigma_i: i\leq \ell\})$ for $\ell=1, 2, \ldots, a$. Next lemma helps us to understand the homotopy types of some subcomplexes of $K_\sigma$ where $\sigma$ is a simplex.

\begin{lemma}\label{quo_parial_boundary} Let $\sigma$ be a simplex $\{v_1, v_2, \ldots, v_{k+1}, w_1, w_2, \ldots, w_\ell\}$ and $L$ be a complex with the maximal facets $M(L)=\{\sigma\setminus \{v_i\}: i=1, 2, \ldots, k+1\}\cup \{\{v_1, v_2, \ldots, v_{k+1}\}\}$. Then the complex $L$ is homotopy equivalent to $S^k$.  \end{lemma}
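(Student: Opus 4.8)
The plan is to identify $L$ with an explicit, familiar complex whose homotopy type is known, and then compute. First I would observe that $\sigma$ is a $(k+\ell)$-simplex on the vertex set $V=\{v_1,\dots,v_{k+1}\}\cup\{w_1,\dots,w_\ell\}$, and $L$ is generated by the $k+2$ faces listed in $M(L)$: the $k+1$ facets $\sigma\setminus\{v_i\}$, each of dimension $k+\ell-1$, together with the $k$-simplex $\tau=\{v_1,\dots,v_{k+1}\}$. I want to understand which subsets of $V$ belong to $L$: a subset $\rho\subseteq V$ lies in $L$ iff it is contained in one of these maximal faces, i.e. iff $\rho\subseteq\sigma\setminus\{v_i\}$ for some $i$ (equivalently $\rho$ omits at least one $v_i$) or $\rho\subseteq\tau$ (equivalently $\rho\subseteq\{v_1,\dots,v_{k+1}\}$). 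So the only simplices of $\sigma$ \emph{not} in $L$ are those $\rho$ that contain all of $v_1,\dots,v_{k+1}$ and at least one $w_j$; in particular $L$ is $\partial\sigma$ together with the single extra face $\tau$ glued in.

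The clean way to finish is via a deformation or via Lemma~\ref{cup_simp}. One route: write $L = \partial\sigma \cup K_\tau$, where $\partial\sigma$ is the boundary of the $(k+\ell)$-simplex (homeomorphic to $S^{k+\ell-1}$, hence contractible when $\ell\geq 1$ since then $\partial\sigma$ is \emph{not} all of $L$ --- wait, $\partial\sigma\simeq S^{k+\ell-1}$ is not contractible), so instead I would proceed as follows. Note $\tau\in\partial\sigma$ already (since $\tau$ omits every $w_j$, so $\tau\subsetneq\sigma$), hence $K_\tau\subseteq\partial\sigma$ and therefore $L$ consists of $\partial\sigma$ plus exactly the faces $\rho$ with $\{v_1,\dots,v_{k+1}\}\subseteq\rho$ and $\rho\cap\{w_1,\dots,w_\ell\}\neq\emptyset$ and $\rho\neq\sigma$. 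Collapsing: the face $\tau$ is a free face direction. Concretely, I would show $L$ deformation retracts onto the subcomplex spanned by $\{w_1,\dots,w_\ell\}$ joined appropriately, or more directly: $L = (\partial\tau * \Delta_W) \cup (\tau * \partial\Delta_W)$ where $\Delta_W$ is the full simplex on $\{w_1,\dots,w_\ell\}$ and $\partial\tau$, $\partial\Delta_W$ are the respective boundaries. Indeed a set omitting some $v_i$ decomposes as (subset of $\tau$ omitting $v_i$) $\cup$ (subset of $W$), giving $\partial\tau * \Delta_W$, and $\tau$ itself together with $\tau\cup(\text{proper subset of }W)$ gives $\tau*\partial\Delta_W$. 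Then $L = \partial\tau * \Delta_W \,\cup\, \tau * \partial\Delta_W$, which is precisely the standard description of $\partial(\tau * \Delta_W) = \partial\Delta_{k+\ell}$ \emph{minus nothing}... so I would instead recognize this union directly: since $\Delta_W$ and $\tau$ are contractible, $\partial\tau*\Delta_W \simeq \partial\tau \simeq S^{k-1}$, $\tau*\partial\Delta_W\simeq\partial\Delta_W\simeq S^{\ell-1}$, and their intersection is $\partial\tau*\partial\Delta_W\simeq S^{k-1}*S^{\ell-1}=S^{k+\ell-1}$.

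Hmm, that gives $\Sigma$ of a sphere, not $S^k$. Let me reconsider: the intended answer is $S^k$, independent of $\ell$, which strongly suggests the $w_j$'s should be \emph{collapsible away}. The key point I missed: the facet $\{v_1,\dots,v_{k+1}\}=\tau$ being in $M(L)$ means $L$ contains the full $k$-simplex on the $v_i$'s, so $\partial\tau$ gets \emph{filled in}. Thus I would argue: start from $L$ and for each $j$, the vertex $w_j$ has the property that $\mathrm{lk}_L(w_j)$ is contractible (it is the boundary-type complex on the remaining vertices with one face filled), so collapsing $w_1,\dots,w_\ell$ one at a time (checking each link is contractible, via Lemma~\ref{complex_add_1v} and a null-homotopy argument since the dimensions drop) reduces $L$ to the complex on $\{v_1,\dots,v_{k+1}\}$ with maximal faces $\{\tau\setminus\{v_i\}\}\cup\{\tau\}$, which is just $\tau$ itself --- contractible, again wrong. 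So the correct bookkeeping must be: collapsing the $w_j$'s leaves $\partial\tau$ (not $\tau$), giving $S^{k-1}$ --- still off by one from $S^k$.

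Given the sign/index subtleties above, \textbf{the main obstacle is getting the exact homotopy type right}, and I expect the honest argument to run as follows: $L$ is obtained from the boundary sphere $\partial\sigma\simeq S^{k+\ell-1}$ by \emph{coning off} one facet is wrong; rather $L\subsetneq\partial\sigma$, and $\partial\sigma\setminus L$ consists of faces containing $\{v_1,\dots,v_{k+1}\}$. I would set $A=\partial\sigma$, note $L\subseteq A$, and compute $A/L$ or use that $A$ is built from $L$ by attaching the cells $\rho$ with $\{v_1,\dots,v_{k+1}\}\subseteq\rho\subsetneq\sigma$, which are indexed by nonempty proper subsets of $W$, i.e. they assemble into a cone on $\partial\Delta_W$; this shows $A\simeq L\cup(\text{cone})$, so $L\simeq A$ with that contractible piece collapsed --- and carefully this collapse lowers dimension by exactly $\ell-1$, yielding $L\simeq S^{(k+\ell-1)-(\ell-1)-1}=S^{k-1}$ or $S^k$ depending on whether the cone is attached along its whole boundary. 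I would verify the small case $\ell=1$ by hand: then $\sigma=\{v_1,\dots,v_{k+1},w_1\}$, $M(L)=\{\sigma\setminus\{v_i\}\}\cup\{\tau\}$; the faces not in $L$ are exactly $\sigma$ and $\{v_1,\dots,v_{k+1},w_1\}\supseteq\tau$... only $\sigma$ itself and no others with $\ell=1$ except $\sigma$, since any proper face either omits some $v_i$ or omits $w_1$ (hence $\subseteq\tau$). So $L=\partial\sigma\simeq S^{k}$. That pins it down, and for general $\ell$ I would induct on $\ell$, peeling off $w_\ell$: check $\mathrm{lk}_L(w_\ell)$ is contractible and $L\setminus w_\ell$ is the corresponding complex with parameter $\ell-1$, so by Lemma~\ref{complex_add_1v} (with the null-homotopy of the link guaranteed by dimension), $L\simeq (L\setminus w_\ell)\vee\Sigma(\mathrm{lk}_L(w_\ell))\simeq L\setminus w_\ell\simeq S^k$, completing the induction. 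The delicate step is confirming $\mathrm{lk}_L(w_\ell)$ is contractible, which I would do by observing it equals the full simplex $\tau$ union the relevant lower faces, i.e. it contains $\tau$ as a maximal-dimensional piece that fills the boundary, making it collapsible.
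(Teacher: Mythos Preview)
Your final inductive strategy --- verify the base case $\ell=1$ directly as $L=\partial\sigma\simeq S^k$, then for $\ell\geq 2$ peel off $w_\ell$ using Lemma~\ref{complex_add_1v} --- is correct and is essentially the paper's proof run in reverse. The paper starts from the full simplex $\tau=\{v_1,\dots,v_{k+1}\}$ and adds the $w_j$'s one at a time: adding $w_1$ produces the jump to $S^k$ (since $\mathrm{lk}_{L_1}(w_1)=\partial\tau\simeq S^{k-1}$ and $L_0=\tau$ is contractible), and each subsequent $w_a$ has contractible link. Your direction (remove $w_\ell,\dots,w_2$) is equivalent.

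However, your stated reason for the contractibility of $\mathrm{lk}_L(w_\ell)$ is wrong. You claim the link ``contains $\tau$ as a maximal-dimensional piece that fills the boundary.'' It does not: $\tau\cup\{w_\ell\}$ contains all of $v_1,\dots,v_{k+1}$ together with $w_\ell$, so it lies in none of the facets $\sigma\setminus\{v_i\}$ and is not $\tau$ itself; hence $\tau\notin\mathrm{lk}_L(w_\ell)$. The correct computation is that $\mathrm{lk}_L(w_\ell)$ has maximal facets $\sigma\setminus\{v_i,w_\ell\}$ for $i=1,\dots,k+1$, and for $\ell\geq 2$ every one of these contains $w_1$, so the link is a cone with apex $w_1$ and therefore contractible. (This is exactly the observation the paper makes for $a\geq 2$.) Your earlier meandering about join decompositions and collapsing all the way down to $L_0$ reflects this same confusion: removing the final $w_1$ does \emph{not} have contractible link (it is $\partial\tau\simeq S^{k-1}$), which is precisely where the $S^k$ is born.
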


\begin{proof} For each $a=1, 2, \ldots, \ell$, define $L_a$ be to be the induced subcommplex of $L$ with the vertex set $\{v_1, v_2, \ldots, v_{k+1}\}\cup \{w_1, w_2, \ldots, w_a\}$. For convenience define $L_0$ to be the induced subcomplex from $L$ with the vertex set $\{v_1, v_2, \ldots, v_{k+1}\}$. Clearly $L=L_\ell$. Note that lk$_{L_1}(w_1)$ is the boundary of the $k$-simplex $\{v_1, v_2, \ldots, v_{k+1}\}$, hence homotopy equivalent to $S^{k-1}$. Therefore,  by Lemma \ref{complex_add_1v}, $$L_1\simeq L_0\vee\Sigma(\text{lk}_{L_1}(w_1))\simeq S^k .$$
For $a\geq 2$, lk$_{L_{a}}(w_a)$ is a cone, therefore contractible; hence $S^k\simeq L_1\simeq L_2\simeq \ldots \simeq L_{\ell}=L $.
\end{proof}
Using a similar approach, we can prove the following lemma.
\begin{lemma}\label{contr_subcompl}   Let $\sigma$ be a simplex $\{v_1, v_2, \ldots, v_{k+1}, w_1, w_2, \ldots, w_\ell\}$.  Fix $p$ such that $1\leq p
\leq \ell$.

Let $L$ be a complex with the maximal facets $M(L)=\{\sigma\setminus \{v_i\}: i=1, 2, \ldots, k+1\}\cup \{\{v_1, v_2, \ldots, v_{k+1}, w_1, w_2, \ldots, w_p\}\}$. Then the complex $L$ is contractible. \end{lemma}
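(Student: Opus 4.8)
The goal is to show that the complex $L$ with $M(L)=\{\sigma\setminus\{v_i\}: i=1,\ldots,k+1\}\cup\{\{v_1,\ldots,v_{k+1},w_1,\ldots,w_p\}\}$ is contractible, so I will mimic the proof of Lemma~\ref{quo_parial_boundary} but keep track of where the extra $w$-vertices enter. The plan is to filter $L$ by adding the vertices $w_{p+1},w_{p+2},\ldots,w_\ell$ one at a time, starting from the full subcomplex $L_p$ on the vertex set $\{v_1,\ldots,v_{k+1},w_1,\ldots,w_p\}$, so that $L_p$ contains the single big facet $\{v_1,\ldots,v_{k+1},w_1,\ldots,w_p\}$ together with all its faces, i.e.\ $L_p$ is the simplex generated by that facet and hence is contractible outright.

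\begin{proof} For each $a$ with $p\leq a\leq \ell$, let $L_a$ be the full subcomplex of $L$ on the vertex set $\{v_1,\ldots,v_{k+1}\}\cup\{w_1,\ldots,w_a\}$. By construction $L_\ell=L$. The facet $\{v_1,\ldots,v_{k+1},w_1,\ldots,w_p\}$ of $L$ lies in $L_p$, and every facet of the form $\sigma\setminus\{v_i\}$ that meets $\{w_{p+1},\ldots,w_\ell\}$ contributes nothing to $L_p$, so $L_p$ is exactly the simplex $K_\tau$ generated by $\tau=\{v_1,\ldots,v_{k+1},w_1,\ldots,w_p\}$; in particular $L_p$ is contractible. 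Now fix $a$ with $p<a\leq\ell$ and suppose $L_{a-1}$ is contractible. The vertex $w_a$ appears in $L_a$ only in the facets $\sigma\setminus\{v_i\}$ with $i=1,\ldots,k+1$ (it does not appear in the big facet $\tau$ since $a>p$), so every maximal face of $L_a$ containing $w_a$ is contained in some $\sigma\setminus\{v_i\}$. Consequently $\mathrm{lk}_{L_a}(w_a)$ is generated by the faces $(\sigma\setminus\{v_i\})\setminus\{w_a\}$ for $i=1,\ldots,k+1$, and since $\{v_1,\ldots,v_{k+1},w_1,\ldots,w_{a-1}\}\setminus\{w_a\}$-- wait, more simply: the union $\bigcup_{i=1}^{k+1}\big((\sigma\setminus\{v_i\})\setminus\{w_a\}\big)$ equals $\sigma\setminus\{w_a\}$ minus nothing forced, and in fact $\bigcap_{i} (\sigma\setminus\{v_i,w_a\})\ni w_1,\ldots,w_{a-1},w_{a+1},\ldots$, so $\mathrm{lk}_{L_a}(w_a)$ is a cone with apex any such common vertex, e.g.\ $w_1$, hence contractible. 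Therefore the inclusion $\mathrm{lk}_{L_a}(w_a)\hookrightarrow L_a$ is null-homotopic, and Lemma~\ref{complex_add_1v} gives $L_a\simeq (L_a\setminus w_a)\vee\Sigma(\mathrm{lk}_{L_a}(w_a))=L_{a-1}\vee\Sigma(\mathrm{lk}_{L_a}(w_a))\simeq L_{a-1}$, which is contractible by the inductive hypothesis. Iterating, $L=L_\ell\simeq L_{\ell-1}\simeq\cdots\simeq L_p$, which is contractible.
\end{proof}

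The only delicate point is the verification that $\mathrm{lk}_{L_a}(w_a)$ is a cone for $a>p$: one must check that the common vertex (say $w_1$, which exists precisely because $p\geq 1$) lies in $(\sigma\setminus\{v_i\})\setminus\{w_a\}$ for every $i$, and that adding $w_1$ to any simplex of $\mathrm{lk}_{L_a}(w_a)$ keeps it inside some facet $\sigma\setminus\{v_i\}$ of $L_a$ — this holds because such a simplex already avoids some $v_i$. If $p=0$ were allowed the argument would break at the base case (there would be no $w_1$ to cone over and $L_0$ would be a sphere, not contractible), which is exactly why the hypothesis requires $1\leq p\leq\ell$. I expect the write-up obstacle to be purely bookkeeping: stating cleanly which faces survive into each $L_a$ and confirming the cone vertex, rather than any genuine topological difficulty, since the structure is a direct adaptation of Lemma~\ref{quo_parial_boundary}.
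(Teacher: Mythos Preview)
Your proof is correct and is exactly the approach the paper has in mind: the paper gives no explicit proof but says ``using a similar approach'' (referring to Lemma~\ref{quo_parial_boundary}), and your filtration $L_p\subset L_{p+1}\subset\cdots\subset L_\ell$ with the cone-over-$w_1$ argument for $\mathrm{lk}_{L_a}(w_a)$ is precisely that adaptation. The only cosmetic issue is the stream-of-consciousness ``wait'' passage in the middle of the proof; for a clean write-up just state directly that the maximal faces of $\mathrm{lk}_{L_a}(w_a)$ are $\{v_1,\ldots,\hat v_i,\ldots,v_{k+1},w_1,\ldots,w_{a-1}\}$ for $i=1,\ldots,k+1$, each of which contains $w_1$ (since $p\ge 1$), so the link is a cone with apex $w_1$.
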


%Let $\tau=\{w_1, w_2, \ldots, w_\ell\}$ which is a simplex in $L$. Hence by Lemma~\ref{sc_homo_t}, SC$_L(\tau)$ is contractible. Note that SC$_L(\tau)\cap \{v_1, v_2, \ldots, v_{k+1}\}$ is the boundary of the $k$-simplex $\{v_1, v_2, \ldots, v_{k+1}\}$ which is homotopy equivalent to $S^{k-1}$. By Lemma~\ref{cup_simp}, the complex $L$ is homotopy equivalent to $\Sigma(S^{k-1})$, i.e., $S^k$.

Next, we'll give a general procedure such that we could implement and  carry out the induction on describing the homology types $\text{SC}_K(L)\cup (\bigcup \{\sigma_i: i\leq \ell\})$ for $\ell=1, 2, \ldots, a$ mentioned above and then determine the homotopy type of $K=\text{SC}_K(L)\cup (\bigcup \{\sigma_i: i\leq a\})$.

Recall that for each simplex $\sigma$, $K_\sigma$ denotes the complex generated by $\sigma$, i.e., $K_\sigma$ contains all the nonempty subsets of $\sigma$.

\begin{lemma}\label{sc_cup_maxfacets} Let $L$ be a subcomplex of the complex $K$ with SC$_K(L)\simeq L$. Let $\{\sigma_1, \ldots, \sigma_a, \tau_1, \ldots, \tau_b\}$ be the collection of all maximal facets in $K$ which has empty intersection with $L$ .

Suppose that there is a non-negative integer $d$ satisfying that: \begin{itemize}
\item[i)] $K_{\sigma_i}\cap \text{SC}_K(L)$ is contractible for $i=1, 2, \ldots, a$;

\item[ii)] $K_{\tau_i}\cap \text{SC}_K(L)$ is homotopy equivalent to $S^d$ and is null-homotopic in  $\text{SC}_K(L)$ for $i=1, 2, \ldots, b$;

\item[iii)] $K_{\sigma_i}\cap K_{\sigma_j}$, $K_{\tau_i}\cap K_{\tau_j}$, $K_{\sigma_k}\cap K_{\tau_\ell}$ are all contained in SC$_K(L)$ for $i\neq j$, $i, k\in [a]$, and $j,\ell \in [b].$\end{itemize}

Then the complex $K$ is homotopy equivalent to $$L\vee \bigvee_b S^{d+1}.$$

\end{lemma}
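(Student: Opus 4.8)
The plan is to peel off the maximal facets one at a time and repeatedly invoke Lemma~\ref{cup_simp}, exactly as indicated in the paragraph preceding the statement. First I would set $K_0=\text{SC}_K(L)$ and, having fixed an enumeration $\{\sigma_1,\dots,\sigma_a,\tau_1,\dots,\tau_b\}$ of the maximal facets of $K$ disjoint from $L$, define $K_t$ to be $\text{SC}_K(L)$ together with the (closed) simplices $K_{\sigma_1},\dots,K_{\sigma_t}$ for $t\le a$, and then $K_{a+s}=K_a\cup K_{\tau_1}\cup\cdots\cup K_{\tau_s}$ for $s\le b$. Since every maximal facet of $K$ either meets $L$ (hence lies in some star $\text{st}_K(v)$, $v\in L$, so in $\text{SC}_K(L)$) or is one of the $\sigma_i,\tau_j$, we have $K_{a+b}=K$, and it suffices to track the homotopy type along this filtration.

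For the $\sigma$-stage, at step $t$ I would write $K_t=K_{t-1}\cup K_{\sigma_t}$. The intersection is $K_{\sigma_t}\cap K_{t-1}=\bigl(K_{\sigma_t}\cap\text{SC}_K(L)\bigr)\cup\bigcup_{j<t}\bigl(K_{\sigma_t}\cap K_{\sigma_j}\bigr)$, and by hypothesis~(iii) each $K_{\sigma_t}\cap K_{\sigma_j}\subseteq\text{SC}_K(L)$, so actually $K_{\sigma_t}\cap K_{t-1}=K_{\sigma_t}\cap\text{SC}_K(L)$, which is contractible by~(i). Thus $K_{\sigma_t}\cap K_{t-1}$ is contractible; the inclusion into $K_{\sigma_t}$ is null-homotopic (its target is contractible), and the inclusion into $K_{t-1}$ is null-homotopic since a contractible complex maps null-homotopically into anything. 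Lemma~\ref{cup_simp} then gives $K_t\simeq K_{t-1}\vee K_{\sigma_t}\vee\Sigma(K_{\sigma_t}\cap K_{t-1})\simeq K_{t-1}$, because $K_{\sigma_t}$ is contractible and the suspension of a contractible space is contractible. Inductively $K_a\simeq K_0=\text{SC}_K(L)\simeq L$.

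For the $\tau$-stage, at step $s$ I would write $K_{a+s}=K_{a+s-1}\cup K_{\tau_s}$, and again using~(iii) the intersection collapses to $K_{\tau_s}\cap K_{a+s-1}=K_{\tau_s}\cap\text{SC}_K(L)$, which by~(ii) is homotopy equivalent to $S^d$ and null-homotopic in $\text{SC}_K(L)$; since $\text{SC}_K(L)\subseteq K_{a+s-1}$, Lemma~\ref{null_homo} makes the inclusion $K_{\tau_s}\cap K_{a+s-1}\hookrightarrow K_{a+s-1}$ null-homotopic as well, and the inclusion into the contractible $K_{\tau_s}$ is automatically null-homotopic. Lemma~\ref{cup_simp} then yields $K_{a+s}\simeq K_{a+s-1}\vee K_{\tau_s}\vee\Sigma(S^d)\simeq K_{a+s-1}\vee S^{d+1}$. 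Iterating over $s=1,\dots,b$ produces $K=K_{a+b}\simeq L\vee\bigvee_b S^{d+1}$, as claimed.

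The one point requiring genuine care—and the place I expect the argument to need the most attention—is the bookkeeping that lets me replace each running intersection $K_{\sigma_t}\cap K_{t-1}$ (resp.\ $K_{\tau_s}\cap K_{a+s-1}$) by the ``clean'' intersection with $\text{SC}_K(L)$ alone. This rests on the fact that for distinct maximal facets $\rho,\rho'$ of a \emph{clique} complex $K$, $K_\rho\cap K_{\rho'}=K_{\rho\cap\rho'}$, so hypothesis~(iii)'s assertion ``$K_{\sigma_i}\cap K_{\sigma_j}\subseteq\text{SC}_K(L)$'' really does control the pairwise overlaps, and the overlap of $K_{\sigma_t}$ with the \emph{union} $K_{t-1}=\text{SC}_K(L)\cup\bigcup_{j<t}K_{\sigma_j}$ distributes over the union. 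I would state this distributivity explicitly and note that it also handles mixed overlaps $K_{\sigma_k}\cap K_{\tau_\ell}$ when the $\tau$'s are added after the $\sigma$'s. Everything else is a routine application of Lemmas~\ref{cup_simp} and~\ref{null_homo} together with the fact that cones and their suspensions are contractible.
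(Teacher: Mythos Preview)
Your proposal is correct and follows essentially the same approach as the paper: build the filtration $\text{SC}_K(L)\subseteq\cdots\subseteq K$ by adjoining the $K_{\sigma_i}$'s and then the $K_{\tau_j}$'s one at a time, use hypothesis~(iii) together with distributivity to reduce each running intersection to the intersection with $\text{SC}_K(L)$ alone, and apply Lemma~\ref{cup_simp} at each step. One small remark: your aside that $K_\rho\cap K_{\rho'}=K_{\rho\cap\rho'}$ requires $K$ to be clique is unnecessary---this identity is pure set theory (subsets of $\rho$ that are also subsets of $\rho'$ are exactly the subsets of $\rho\cap\rho'$), and in any case hypothesis~(iii) already hands you the containment $K_{\sigma_i}\cap K_{\sigma_j}\subseteq\text{SC}_K(L)$ directly, so no such identification is needed.
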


\begin{proof} %Recall that for $i=1, 2, \ldots, a$ and $j=1, 2, \ldots, b$, define $K_{\sigma_i}$ is complex generated by $\sigma_i$ and $K_{\tau_j}$ is the complex generated by $\tau_j$. Then,
For $\ell=1, 2, \ldots, a$, we define $L_\ell=\text{SC}_K(L)\cup (\bigcup_{i\leq \ell}K_{\sigma_i})$; and for $\ell=a+1, a+2, \ldots, a+b$, $L_\ell=L_a\cup (\bigcup_{i\leq \ell}K_{\tau_{\ell-a}})$. It is straightforward to verify that $K=L_{a+b}$.

First we show that $L_\ell$ is homotopy equivalent to $L$ for each $\ell\leq a$. Since $\text{SC}_K(L)\cap \sigma_1$ is contractible, by Lemma~\ref{cup_simp} and item i) in the assumption, $$L_1\simeq\text{SC}_K(L)\vee \Sigma(\text{SC}_K(L)\cap \sigma_1)\simeq \text{SC}_K(L)\simeq L.$$

Assume the result holds for all number $\ell-1$ with $2\leq \ell\leq a$. Note that by item iii) in the assumption, $L_{\ell-1}\cap K_{\sigma_\ell}= \text{SC}_K(L)\cap K_{\sigma_\ell}$. Hence by Lemma~\ref{cup_simp} and item i) in the assumption again, $L_\ell\simeq L$.

Next, we show that for $\ell=a+1, a+2, \ldots, a+b$, $L_\ell$ is homotopy equivalent to $L\vee (\bigvee_{\ell-a}S^{d+1})$. By item iii) in the assumption, $L_{a}\cap K_{\tau_{1}}=SC_{K}(L)\cap K_{\tau_1}$. By item ii) and Lemma~\ref{null_homo}, $ L_{a}\cap K_{\tau_{1}}$ is null-homotopic in $L_a$. By item ii) in the assumption $L_{a}\cap K_{\tau_{1}}$ is homotopic equivalent to $S^d$. So by Lemma~\ref{cup_simp}, $$L_{a+1}\simeq L_a\vee \Sigma(L_{a}\cap K_{\tau_{1}})\simeq L\vee S^{d+1}.$$

The proof for the general $\ell\geq a+1$ is similar and we skip it here. This shows that $K=L_{a+b}\simeq  L\vee (\bigvee_b(S^{d+1}))$.
\end{proof}
Similarly, we get the following corollary.
\begin{cor}\label{sc_cup_maxfacets_c} Let $L$ be a subcomplex of the complex $K$. Let $\{\sigma_1, \ldots, \sigma_a\}$ be the collection of all maximal facets in $K$ which are not in $L$.  For $i=1, 2, \ldots, a$, define $K_{\sigma_i}$ to be the complex generated by $\sigma_i$.

Suppose that  that: \begin{itemize}
\item[i)] $K_{\sigma_i}\cap L$ is contractible for $i=1, 2, \ldots, a$;

\item[ii)] $\sigma_i\cap \sigma_j$ are all in $L$ for $i\neq j$. \end{itemize}

Then the complex $K$ is homotopy equivalent to $L$.

\end{cor}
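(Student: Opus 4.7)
The plan is to do induction on $a$, enlarging $L$ one maximal facet at a time. Set $L_0 = L$ and $L_\ell = L \cup \bigcup_{i \le \ell} K_{\sigma_i}$, so that $L_a = K$. At each step I would write $L_\ell = L_{\ell-1} \cup K_{\sigma_\ell}$ and apply Lemma~\ref{cup_simp} to conclude $L_\ell \simeq L_{\ell-1}$, provided the intersection $L_{\ell-1} \cap K_{\sigma_\ell}$ is contractible; once this is established the iteration yields $K \simeq L$ directly, because both $K_{\sigma_\ell}$ (a simplex) and the intersection would be contractible, so both inclusions into the two pieces are automatically null-homotopic and Lemma~\ref{cup_simp} collapses the wedge summands.

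The key computation is therefore identifying $L_{\ell-1} \cap K_{\sigma_\ell}$. By definition,
\[
L_{\ell-1} \cap K_{\sigma_\ell} \;=\; (L \cap K_{\sigma_\ell}) \;\cup\; \bigcup_{i<\ell}\bigl(K_{\sigma_i} \cap K_{\sigma_\ell}\bigr).
\]
A simplex in $K_{\sigma_i} \cap K_{\sigma_\ell}$ is a subset of $\sigma_i \cap \sigma_\ell$. Hypothesis (ii) says $\sigma_i \cap \sigma_\ell$ is a simplex of $L$, so every such subset lies in $L$, and in fact in $L \cap K_{\sigma_\ell}$. Thus each of the extra pieces is absorbed, giving $L_{\ell-1} \cap K_{\sigma_\ell} = L \cap K_{\sigma_\ell}$, which is contractible by hypothesis (i). This is the only nonroutine verification and is where conditions (i) and (ii) are both used.

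With that identification in hand, Lemma~\ref{cup_simp} (the inclusions of a contractible subcomplex are null-homotopic in any target) yields
\[
L_\ell \;\simeq\; L_{\ell-1} \vee K_{\sigma_\ell} \vee \Sigma(L_{\ell-1} \cap K_{\sigma_\ell}) \;\simeq\; L_{\ell-1},
\]
since the last two wedge summands are contractible. Chaining the equivalences for $\ell = 1, 2, \ldots, a$ produces $K = L_a \simeq L_{a-1} \simeq \cdots \simeq L_0 = L$, which is the desired conclusion. The expected main obstacle is simply the bookkeeping in the first paragraph, confirming that the pairwise intersections among the $K_{\sigma_i}$ do not contribute anything beyond $L \cap K_{\sigma_\ell}$; the rest is a direct application of the already-proved lemmas in Section~\ref{sc_sc}.
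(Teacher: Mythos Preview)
Your proof is correct and follows essentially the same inductive approach as the paper's proof of Lemma~\ref{sc_cup_maxfacets}, of which this corollary is the stated ``similar'' consequence: building $L_\ell = L\cup\bigcup_{i\le\ell}K_{\sigma_i}$, using hypothesis~(ii) to show $L_{\ell-1}\cap K_{\sigma_\ell}=L\cap K_{\sigma_\ell}$, and then applying Lemma~\ref{cup_simp} with hypothesis~(i). The paper gives no separate proof beyond the word ``Similarly,'' and your write-up is exactly the intended argument.
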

The next lemma allows us to characterize  the maximal faces in a subcomplex $L$ of $K$ if the collection of maximal faces in $K$, $M(K)$, is known.
\begin{lemma}\label{mx_f_subcmplx} Let $L$ be a full subcomplex of $K$ with the collection of maximal faces $M(K)$. For any maximal face $\sigma_L\in L$, there is a simplex $\sigma_K\in M(K)$ such that $\sigma_L=\sigma_K\cap L$. \end{lemma}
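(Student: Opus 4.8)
The plan is to exploit the defining property of a full subcomplex together with the elementary fact that every simplex of a finite complex is contained in a maximal face. First I would take an arbitrary maximal face $\sigma_L$ of $L$. Since $L$ is a subcomplex of $K$, $\sigma_L$ is in particular a simplex of $K$, so there is a maximal face $\sigma_K\in M(K)$ with $\sigma_L\subseteq \sigma_K$ (such a $\sigma_K$ exists because $K$ is finite, which is the setting throughout the paper).

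Next I would consider the vertex set $\tau=\sigma_K\cap L^{(0)}$, reading the expression $\sigma_K\cap L$ in the statement as the intersection of $\sigma_K$ with the vertex set of $L$. Because $\tau\subseteq \sigma_K$ and $K$ is closed under passing to faces, $\tau$ spans a simplex of $K$; and since every vertex of $\tau$ lies in $L$ and $L$ is a full subcomplex of $K$, this simplex $\tau$ belongs to $L$.

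Finally I would invoke the maximality of $\sigma_L$ in $L$. On the one hand $\sigma_L\subseteq \sigma_K$, and on the other hand $\sigma_L\subseteq L^{(0)}$ since $\sigma_L\in L$; hence $\sigma_L\subseteq \sigma_K\cap L^{(0)}=\tau$. As $\tau\in L$ and $\sigma_L$ is a maximal face of $L$, this forces $\sigma_L=\tau=\sigma_K\cap L^{(0)}$, which is exactly the assertion.

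The argument is essentially immediate, so there is no real obstacle here; the only point that deserves a word of care is the existence of a maximal face of $K$ containing $\sigma_L$, which is automatic for finite complexes (all complexes considered in this paper are finite) but would require an extra hypothesis in the infinite setting.
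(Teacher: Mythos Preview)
Your proof is correct and follows essentially the same approach as the paper: choose a maximal face $\sigma_K\in M(K)$ containing $\sigma_L$, then use fullness of $L$ together with the maximality of $\sigma_L$ to conclude $\sigma_L=\sigma_K\cap L^{(0)}$. The paper phrases the last step vertex-by-vertex (any $v\in\sigma_K\cap L$ satisfies $\sigma_L\cup\{v\}\in L$, hence $v\in\sigma_L$), which is the same argument as yours.
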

\begin{proof} Fix a maximal facet  $\sigma_L\in L$. Then there exists a simplex $\sigma_{K}\in M(K)$ such that $\sigma_L\subseteq \sigma_K$. Take a vertex $v\in \sigma_K\cap L$. Because $L$ is a full subcomplex, $\sigma_L\cup \{v\}$ is also a simplex. Since $\sigma_L$ is maximal in $L$, $v\in \sigma_L$. This finishes the proof.
\end{proof}

\section{Maximal facets and Homotopy types of $\mathcal{VR}(\mathcal{F}_1^m\cup \mathcal{F}_2^m\cup \mathcal{F}_3^m; 3)$} \label{mx_f1f2f3_sec}

Starting from this section, each vertex of a complex is a subset of $[m]$ and we'll use $A$, $B$, $C$, or $D$ to represent them. The following lemma gives a characterization of two vertices $A$ and $B$ with $d(A, B)\leq 3$ which is easy to verify.

\begin{lemma}\label{dist_3} For any $A, B\in Q_m$ with $|A|\leq |B|-2$, $d(A, B)\leq 3$ if and only if $A\subseteq B$.

For any $A, B\in Q_m$ with $|A|= |B|-1$, $d(A, B)\leq 3$ if and only if $|A\setminus B|\leq 1$.
\end{lemma}

Let $A$  be a subset of  $[m]$ and $\mathcal{F}$ be a subcollection of $Q_m$. Define $N_\mathcal{F}(A)=\{B\in \mathcal{F}: |B\Delta A|=1\}$ and  $N_\mathcal{F}[A] =\{A\}\cup N_\mathcal{F}(A)$. For $i_1, i_2, \ldots, i_k\in [m]$, $A^{i_1, i_2, \ldots, i_k}=A\Delta \{i_1, i_2, \ldots, i_k\}$. Then $H_A^{i_1, i_2, i_3} =\{A, A^{i_1, i_2}, A^{i_1, i_3}, A^{i_2, i_3}\}$.

Let $K=\mathcal{VR}(Q_m; 3)$ and $M(K)$ be the collection of maximal faces in $K$. Shukla in \cite{Shu22} proved that $M(K)$ could be divided into the following three collections:

\begin{itemize}
\item[] $\mathcal{A}=\{N[A]\cup H_A^{i_1, i_2, i_3}: A\subseteq [m] \text{ and }i_1, i_2, i_3\in [m]\}$;

\item[] $\mathcal{B}=\{N[A]\cup N[B]: A, B\subseteq [m] \text{ and }|A\Delta B|=1\}$;

\item[] $\mathcal{C}=M(K)\setminus (\mathcal{A}\cup \mathcal{B})$ which is a collection of $7$-simplices.
\end{itemize}

Also, the simplices in $\mathcal{C}$ is in the form of the maximal simplices in the join of $8$-copies of $S^0$:
\begin{multline*}
    \overline{\{A, A^{i_1, i_2, i_3,i_4}\}}\ast \overline{\{A^{i_1}, A^{i_2, i_3,i_4}\}}\ast \overline{\{A^{i_2}, A^{i_1, i_3,i_4}\}}\ast \overline{\{A^{i_3}, A^{i_1, i_2,i_4}\}}\ast \overline{\{A^{i_4}, A^{i_1, i_2,i_3}\}}\ast \\ \overline{\{A^{i_1, i_2}, A^{ i_3,i_4}\}}\ast \overline{\{A^{i_1, i_3}, A^{i_2, i_4}\}}\ast \overline{\{A^{i_1, i_4}, A^{i_2, i_3}\}}
\end{multline*}
which are not in $\mathcal{A}$ or $\mathcal{B}$. Here, $A\subset [m]$ and $i_1, i_2, i_3, i_4\in [m]$; also $\overline{\{A, B\}}$ represents the full subcomplex of $K$ with vertices $A$ and $B$.

In the complex $\mathcal{VR}(\mathcal{F}^m_n; 3)=\mathcal{VR}(\mathcal{F}^m_n; 2)$, it is proved in \cite{FN23} that the maximal simplices are in the forms $N_{\mathcal{F}^m_n}(A)$ or $N_{\mathcal{F}^m_n}(B)$ with $|A|=n+1$ and $|B|=n-1$. To determine the collection of maximal faces in $\mathcal{VR}(\mathcal{F}_n^m \cup \mathcal{F}_{n+1}^m; 3)$, by Lemma~\ref{mx_f_subcmplx}, we need to look at all the possible intersections  of the maximal faces in $\mathcal{VR}(Q_m; 3)$ with $\mathcal{F}_n^m \cup \mathcal{F}_{n+1}^m$. By going through all these intersections, we get the following result.

\begin{lemma} The maximal faces in the complex $K=\mathcal{VR}(\mathcal{F}_n^m \cup \mathcal{F}_{n+1}^m; 3)$ are in the following forms:

\begin{itemize}

\item[a)] $N_K [A]\cup N_K[B]$ for $A\subset B$ and i), $|A|=n-1, |B|=n$; ii) $|A|=n, |B|=n+1$; or iii) $|A|=n+1, |B|=n+2$.

\item[b)] $N_K [A]\cup H_A^{i_1, i_2, i_3}$ for i) $|A|=n-1$ and $i_1, i_2, i_3\notin A$;% ii)$|A|=n$ and $i_1, i_2, i_3\notin A$;
or,  ii) $|A|=n+2$ and $i_1, i_2, i_3\in A$. %(All other forms are not maximal.)
\end{itemize}
\end{lemma}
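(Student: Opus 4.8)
The plan is to use the fact that $K=\mathcal{VR}(\mathcal{F}_n^m\cup\mathcal{F}_{n+1}^m; 3)$ is a clique full subcomplex of $\mathcal{VR}(Q_m; 3)$. By Lemma~\ref{mx_f_subcmplx}, every maximal face of $K$ has the form $\sigma_K\cap(\mathcal{F}_n^m\cup\mathcal{F}_{n+1}^m)$ for some maximal face $\sigma_K$ of $\mathcal{VR}(Q_m;3)$, i.e.\ some $\sigma_K\in\mathcal{A}\cup\mathcal{B}\cup\mathcal{C}$ in Shukla's classification \cite{Shu22}. So I would go through the three families one at a time, compute all the intersections $\sigma_K\cap(\mathcal{F}_n^m\cup\mathcal{F}_{n+1}^m)$, keep the ones that are inclusion-maximal, and check the resulting list is exactly a)--b). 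Throughout I would read every symbol $N_K[\,\cdot\,]$ as $N_{Q_m}[\,\cdot\,]\cap K^{(0)}$, so that, e.g., $N_K[A]$ with $|A|=n-1$ just denotes the set of size-$n$ one-element extensions of $A$; and I would use that since all vertices of $K$ have cardinality $n$ or $n+1$, Lemma~\ref{dist_3} governs adjacency: two equicardinal vertices are joined iff they differ by a transposition, and a size-$n$ set $A$ and size-$(n+1)$ set $B$ are joined iff $|A\setminus B|\le 1$.

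For the family $\mathcal{B}$, write $\sigma_K=N[A]\cup N[B]$ with $B=A\cup\{j\}$ (swapping $A$ and $B$ if necessary). The cardinalities occurring in $\sigma_K$ are $|A|-1,|A|,|A|+1,|A|+2$, so $\sigma_K$ meets the two levels $\{n,n+1\}$ only when $|A|\in\{n-2,\ldots,n+2\}$; tracking which neighbours survive the intersection gives a set of type a-i) when $|A|=n-1$, of type a-ii) when $|A|=n$, of type a-iii) when $|A|=n+1$, and, when $|A|\in\{n-2,n+2\}$, a set of equicardinal vertices that is a proper face of a set of type a-i) resp.\ b-ii). For the family $\mathcal{A}$, write $\sigma_K=N[A]\cup H_A^{i_1,i_2,i_3}$. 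The four vertices of $H_A^{i_1,i_2,i_3}$ all have the parity of $|A|$ and differ from $|A|$ in cardinality by $0$ or $\pm 2$ according to how many of $i_1,i_2,i_3$ lie in $A$; a short case check shows that the only way for three of them to land in two consecutive levels is $|A|=n-1$ with $i_1,i_2,i_3\notin A$ (giving type b-i)) or $|A|=n+2$ with $i_1,i_2,i_3\in A$ (giving type b-ii)), while every other choice yields a clique already contained in a set of type a) or b).

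The crux is to show the $7$-simplices in $\mathcal{C}$ contribute nothing new. Writing such a $\sigma_K$ through its join structure, one vertex is chosen from each of eight factors; exactly four of them (the factor $\{A,A^{i_1i_2i_3i_4}\}$ together with the three ``two-swap'' factors $\{A^{i_pi_q},A^{i_ri_s}\}$) consist of vertices of parity $\equiv|A|$, and the other four (the ``one/three-swap'' factors) of parity $\not\equiv|A|$. Hence, for $\sigma_K\cap K^{(0)}$ to be all eight vertices, the four vertices chosen from the first group must share a common cardinality $s_0$ and those from the second a common cardinality $s_1$, with $\{s_0,s_1\}=\{n,n+1\}$. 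Letting $k$ be the number of $i_1,\dots,i_4$ lying in $A$, I would check each $k\in\{0,1,2,3,4\}$: when $k$ is even the factor $\{A,A^{i_1i_2i_3i_4}\}$, whose cardinalities are $|A|$ and $|A|+4-2k$, is incompatible with the two-swap factors (which force $s_0=|A|+2$ if $k=0$, $s_0=|A|-2$ if $k=4$, and contain the cardinality pair $\{|A|-2,|A|+2\}$ if $k=2$); when $k$ is odd, three of the four one/three-swap factors force $s_1=|A|+1$ while the fourth offers only cardinalities $\{|A|-1,|A|+3\}$. Every case is a contradiction, so $\sigma_K\cap K^{(0)}$ is a proper subset of the eight vertices, a clique of $K$ already contained in some face of type a) or b) by the previous paragraph.

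Finally I would verify the converse: each set listed in a) and b) is a clique of $K$ by a direct application of Lemma~\ref{dist_3}, and it is inclusion-maximal because any $C\in\mathcal{F}_n^m\cup\mathcal{F}_{n+1}^m$ joined to all of its vertices is forced, via $\text{diam}\le 3$ and a parity count on $|C\Delta A|$, to already belong to it. I expect the main obstacle to be exactly this bookkeeping: organizing the many subcases of $\mathcal{A}$ and $\mathcal{B}$ by the value of $|A|$ and by how the swap indices meet $A$, and carrying out the cardinality/parity analysis that kills $\mathcal{C}$, without losing or double-counting a case.
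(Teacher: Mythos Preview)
Your plan is exactly the approach the paper uses: the paper says only ``by Lemma~\ref{mx_f_subcmplx}, we need to look at all the possible intersections of the maximal faces in $\mathcal{VR}(Q_m;3)$ with $\mathcal{F}_n^m\cup\mathcal{F}_{n+1}^m$'' and then states the lemma without further detail, so your casework through $\mathcal{A}$, $\mathcal{B}$, $\mathcal{C}$ is precisely the intended computation, carried out far more explicitly.

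One step needs more than ``by the previous paragraph''. Your parity/cardinality argument for $\mathcal{C}$ correctly shows that $\sigma_K\cap K^{(0)}$ has at most seven vertices, but this alone does not place that clique inside a face of type a) or b): the previous paragraph only handled intersections coming from $\mathcal{A}\cup\mathcal{B}$, and Lemma~\ref{mx_f_subcmplx} does not say a \emph{proper} subset of a $\mathcal{C}$-simplex must lie in some $\mathcal{A}$- or $\mathcal{B}$-simplex. You must actually exhibit, for each surviving $(\le 7)$-vertex clique, a containing face of type a) or b) (equivalently, show it is not maximal in $K$). This is not hard once you have pinned down which factor is forced to miss $K^{(0)}$: for instance, in the case $k=1$, $|A|=n$, $i_1\in A$, the seven surviving vertices $A,\,A^{i_2},A^{i_3},A^{i_4},\,A^{i_1i_2},A^{i_1i_3},A^{i_1i_4}$ all lie in $N_K[A\setminus\{i_1\}]\cup N_K[A]$, a face of type a-i); the other parities of $k$ are analogous. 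So the fix is just one more round of the same bookkeeping you already anticipate, but it is a genuine logical step and should not be absorbed into a backward reference.
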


%\begin{cor} The maximal facets in the complex $K=\mathcal{VR}(\mathcal{F}_2^n \cup \mathcal{F}_{3}^n, 3)$ are  the intersection of $K$ with the following simplices for $i_1, i_2, i_3, i_4\in [n]$:

%\begin{itemize}
%\item[i)] $N[i_1]\cup N[i_1, i_2]$;

%\item[ii)] $N[i_1, i_2]\cup N[i_1, i_2, i_3]$;
%\item[iii)] $N[i_1, i_2, i_3]\cup N[i_1, i_2, i_3, i_4]$;

%\item[iv)]$N[i_1, i_2, i_3, i_4]\cup H_{A}^{i_2, i_3, i_4}$ where $A=\{i_1,i_2,i_3,i_4\}$;
%\item[v)] $N[i_1]\cup H_{A}^{i_2, i_3, i_4}$ where $A=\{i_1\}$.

%\end{itemize}

%\end{cor}
It is easy to see that $\mathcal{F}_1^n$ forms a simplex in $\mathcal{VR}(\mathcal{F}_1^n \cup \mathcal{F}_{2}^n; 3)$ since the distance of each pair of different vertices in $\mathcal{F}_1^n$ is equal to $ 2$. Hence it is straightforward to verify that there are two types of maximal faces in the complex $\mathcal{VR}(\mathcal{F}_1^n \cup \mathcal{F}_{2}^n; 3)$ listed in the corollary below.
\begin{cor}\label{max_face_1_2} The maximal faces in the complex $K=\mathcal{VR}(\mathcal{F}_1^n \cup \mathcal{F}_{2}^n; 3)$ are  the intersection of $K^{(0)}$ with the following simplices for $i_1, i_2, i_3\in [n]$:

\begin{itemize}
\item[i)] $N[\emptyset]\cup N[i_1 ]$;

%\item[ii)] $N[i_1]\cup N[i_1, i_2]$ (not maximal any more);
%\item[ii)] $N[i_1, i_2]\cup N[i_1, i_2, i_3]$ (not maxiaml any more);

%\item[ii)]$N[i_1, i_2, i_3]\cup H_{A}^{i_1, i_2, i_3}$ where $A=\{i_1,i_2,i_3\}$ (not maixmal any more);
\item[ii)] $N[\emptyset]\cup H_{A}^{i_1, i_2, i_3}$ where $A=\emptyset$.

\end{itemize}

\end{cor}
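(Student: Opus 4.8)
The plan is to exploit that $K$ is a clique complex, so that its facets are exactly the maximal cliques of its $1$-skeleton, and to describe those cliques directly. First I would record the relevant distances in $\mathcal{F}_1^n\cup\mathcal{F}_2^n$: two distinct singletons are at distance $2$; a singleton $\{k\}$ and a doubleton $B$ are at distance $1$ if $k\in B$ and at distance $3$ otherwise; and two distinct doubletons $B,B'$ are at distance $2$ when $|B\cap B'|=1$ and at distance $4$ when $B\cap B'=\emptyset$. In particular any pair of vertices at least one of which is a singleton spans an edge of $K$, so $\mathcal{F}_1^n$ spans a simplex of $K$ that lies in $\text{st}_K(v)$ for every vertex $v$ of $K$. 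Hence, starting from an arbitrary facet $F$ one can adjoin every missing singleton without leaving $K$, so $\mathcal{F}_1^n\subseteq F$; writing $F=\mathcal{F}_1^n\cup D$ with $D\subseteq\mathcal{F}_2^n$, cliqueness forces $D$ to be a family of pairwise intersecting $2$-subsets of $[n]$, and since singletons can always be restored while two disjoint doubletons can never coexist, $F$ is maximal precisely when $D$ is a maximal such family.

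It then remains to classify the maximal pairwise intersecting families $D$ of $2$-subsets of $[n]$, which is elementary. If every member of $D$ contains a common element $i_1$, then maximality forces $D=\{\{i_1,j\}:j\in[n]\setminus\{i_1\}\}$ and $\mathcal{F}_1^n\cup D=(N[\emptyset]\cup N[i_1])\cap K^{(0)}$, the facet in i). Otherwise I would show $D$ contains the three $2$-subsets of a single $3$-set: taking $\{a,b\}\in D$, the failure of $a$ to lie in every member yields a member containing $b$, say $\{b,c\}$, and the failure of $b$ to lie in every member then yields a member forced to be $\{a,c\}$; any further member of $D$ must meet each of $\{a,b\},\{b,c\},\{a,c\}$ and hence be contained in $\{a,b,c\}$, so $D=\{\{a,b\},\{a,c\},\{b,c\}\}$ and $\mathcal{F}_1^n\cup D=(N[\emptyset]\cup H_A^{i_1,i_2,i_3})\cap K^{(0)}$ with $A=\emptyset$ and $\{i_1,i_2,i_3\}=\{a,b,c\}$, the facet in ii). Conversely, for $n\ge 4$ both families are indeed maximal, so these two types exhaust the facets of $K$.

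I do not anticipate a real obstacle: the heart of the argument is the elementary classification of maximal pairwise intersecting families of $2$-subsets of $[n]$, namely the stars and the triangles, together with the observation that $\mathcal{F}_1^n$ behaves as a cone point over the whole $1$-skeleton of $K$. The only mild subtlety is the degenerate range: for $n=3$ the star of a single element fails to be maximal and only facets of type ii) occur, but since the corollary is applied for $n\ge 4$ this causes no trouble. As a cross-check one can also recover the statement from the preceding lemma by specializing its cardinality parameter to $1$ and intersecting each listed facet with $K^{(0)}=\mathcal{F}_1^n\cup\mathcal{F}_2^n$, discarding the intersections that omit some singleton and are therefore non-maximal.
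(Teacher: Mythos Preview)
Your argument is correct. You classify the facets of $K$ directly: since every singleton is adjacent to every vertex of $K$, each facet contains all of $\mathcal{F}_1^n$, and the doubletons in a facet then form a maximal pairwise-intersecting family of $2$-subsets of $[n]$, which you correctly identify as either a star $\{\{i_1,j\}:j\neq i_1\}$ or a triangle $\{\{a,b\},\{a,c\},\{b,c\}\}$. Matching these against $(N[\emptyset]\cup N[i_1])\cap K^{(0)}$ and $(N[\emptyset]\cup H_\emptyset^{i_1,i_2,i_3})\cap K^{(0)}$ is immediate, and your remark about $n=3$ is accurate.

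The paper takes a slightly different route: it derives the corollary by specializing the preceding lemma on $\mathcal{VR}(\mathcal{F}_n^m\cup\mathcal{F}_{n+1}^m;3)$ to the case $n=1$, which in turn rests on Shukla's classification of maximal faces in $\mathcal{VR}(Q_m;3)$ together with Lemma~\ref{mx_f_subcmplx}. The observation that $\mathcal{F}_1^n$ spans a simplex is then used to discard those candidate facets from the general lemma that fail to contain every singleton, leaving precisely types i) and ii). Your direct clique classification is more self-contained and avoids invoking the ambient $Q_m$ machinery; the paper's approach is shorter given that the general lemma is already in hand. You even note this specialization as a cross-check at the end, so you have both arguments.
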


%\begin{lemma} For any $n\geq 3$, the complex $\mathcal{VR}(\mathcal{F}_1^n\cup \mathcal{F}_2^n, 3)$ is contractible. \end{lemma}

%\begin{proof} Let $B$ be an arbitrary singleton subset of $[n]$. Then, if $C$ is a subset of $[n]$ with $|C|=1$, or $2$, the symmetric difference $B\Delta C\leq 3$. Hence the complex $\mathcal{VR}(\mathcal{F}_1^n\cup \mathcal{F}_2^n, 3)$ is a cone, hence it is contractible. \end{proof}

Next, we'll investigate the homotopy type of $\mathcal{VR}(\mathcal{F}_1^n\cup \mathcal{F}_2^n\cup \mathcal{F}_3^n; 3)$ which is used in the proof of the key lemma, Lemma~\ref{lk_A_preceq}. For $n= 3$, $\mathcal{VR}(\mathcal{F}_1^n\cup \mathcal{F}_2^n\cup \mathcal{F}_3^n; 3)$ is a cone, hence contractible. For $n\geq 4$, we'll use Lemma~\ref{complex_add_1v} determine the homotopy type of the complex $K=\mathcal{VR}((\mathcal{F}_1^n\cup \mathcal{F}_{2}^n\cup\mathcal{F}_{3}^n)\cap \mathcal{F}_{\preceq A}^n; 3)$. So it is essential to determine the homotopy type of lk$_K(A)$.

\begin{lemma}\label{lk_A_f1f2f3} Fix $n\geq 4$ and $A=i_1i_2i_3$ with $i_1\geq 2$. Consider the simplicial complex  $K=\mathcal{VR}((\mathcal{F}_1^n\cup \mathcal{F}_{2}^n\cup\mathcal{F}_{3}^n)\cap \mathcal{F}_{\preceq A}^n; 3)$. Then,

$$\text{lk}_K(A)\simeq \bigvee_{ i_1-1 } S^5.$$
\end{lemma}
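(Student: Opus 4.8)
The plan is to realise $L:=\text{lk}_K(A)$ as the clique complex $\mathcal{VR}(N;3)$ on the neighbourhood of $A$, peel it down to a cone, and then glue the removed vertices back. Since $K$ is a clique complex, $L$ is the full subcomplex of $K$ on $N:=\{B\in K^{(0)}:B\ne A,\ d(A,B)\le 3\}$. Writing $A=i_1i_2i_3$, Lemma~\ref{dist_3} together with the definition of $\prec$ lists $N$ precisely: the three singletons $\{i_j\}$; the three $2$-sets $i_ji_k\subseteq A$; the $2$-sets $\{i_j,b\}$ with $i_j\in A$ and $b\in[n]\setminus A$; and the $3$-sets that meet $A$ in exactly two elements and precede $A$. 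The adjacencies in $L$ are again read off from Lemma~\ref{dist_3}, and the fact that makes the whole argument go is that $i_2i_3$ is joined in $L$ to every vertex of $N$ except the $2$-sets $\{i_1,b\}$, $b\in[n]\setminus A$: writing $W$ for this set of $2$-sets, $L\setminus W$ is thus a cone with apex $i_2i_3$, hence contractible.

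Next I would add the vertices of $W$ back one at a time, in a convenient order along $\prec$ (for instance the $\{i_1,b\}$ with $b>i_1$ first), tracking the homotopy type by Lemma~\ref{complex_add_1v}. When $\{i_1,b\}$ with $b>i_1$ is inserted, $\{i_1\}$ is a cone vertex of its link in the complex built so far, so the link is contractible and the homotopy type does not change. When $\{i_1,b\}$ with $b<i_1$ is inserted, its link is homotopy equivalent to $S^4$; since that link has homotopy dimension $4$ while the complex built so far is a wedge of $5$-spheres, the inclusion of the link is automatically null-homotopic (by Lemma~\ref{null_homo} one may even factor it through a star), so Lemma~\ref{complex_add_1v} wedges on one copy of $\Sigma S^4=S^5$. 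As $[n]\setminus A$ contains exactly $i_1-1$ elements less than $i_1$, we obtain exactly $i_1-1$ copies of $S^5$, i.e.\ $L\simeq\bigvee_{i_1-1}S^5$. The same computation can be packaged through Lemma~\ref{sc_cup_maxfacets}: choose an appropriate contractible full subcomplex $L_0\subseteq L$, check $\text{SC}_L(L_0)\simeq L_0$ with Corollary~\ref{sc_homo_cor}, and identify the $i_1-1$ spherical facets from Shukla's list of $M(\mathcal{VR}(Q_n;3))$ via Lemma~\ref{mx_f_subcmplx}.

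The hard part is the combinatorial bookkeeping behind the two link computations: one must determine, for each $b\in[n]\setminus A$ and in the presence of the vertices already re-inserted, exactly which $3$-sets and which $2$-sets $\{i_j,c\}$ with $c<i_1$ are adjacent to $\{i_1,b\}$, and organise the order of insertion so that the links come out as claimed. The dependence on the sign of $b-i_1$ — and hence the final count $i_1-1$ — is precisely where the hypothesis $i_1\ge2$ and the precedence conditions cutting down $N$ are used. A good consistency check is the base case $n=4$, $A=\{2,3,4\}$: here $W=\{\{1,2\}\}$, $L\setminus W$ is a cone, and $L$ itself is the boundary of the $6$-dimensional cross-polytope (its non-adjacency graph is a perfect matching on $12$ vertices), so $L\simeq S^5=\bigvee_{i_1-1}S^5$.
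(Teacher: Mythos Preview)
Your approach is correct and takes a genuinely different route from the paper's. The paper partitions $L^{(0)}$ according to the element lying outside $A$: it sets $\mathcal{G}_0=\{i_1,i_2,i_3,i_1i_2,i_1i_3,i_2i_3\}$ and, for each $\ell\in[n]\setminus A$, lets $\mathcal{G}_\ell$ be the set of vertices of $L$ containing $\ell$; it then builds $L_\ell=\mathcal{VR}(\bigcup_{i\le\ell}\mathcal{G}_i;3)$ block by block, uses Corollary~\ref{sc_homo_cor} to obtain $\text{SC}_{L_\ell}(L_{\ell-1})\simeq L_{\ell-1}$, and observes that for $\ell<i_1$ the $5$-simplex $\sigma_\ell=\mathcal{G}_\ell$ meets the star cluster in exactly its boundary (giving $L_\ell\simeq L_{\ell-1}\vee S^5$), while for $\ell>i_1$ all of $\mathcal{G}_\ell$ already lies in $\text{st}(i_1)$ and nothing changes. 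Your vertex-by-vertex peel-and-rebuild avoids the star-cluster machinery, a gain in elementarity; the paper's block decomposition, on the other hand, is exactly the template reused verbatim to prove the central Lemma~\ref{lk_A_preceq}, so it scales more directly to the rest of the argument.

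Two points deserve tightening. First, the claim that for $b<i_1$ the link of $\{i_1,b\}$ in the partially rebuilt complex is homotopy equivalent to $S^4$ is true but not immediate: the ten vertices $i_1,i_2,i_3,i_1i_2,i_1i_3,bi_2,bi_3,bi_1i_2,bi_1i_3,bi_2i_3$ span a cross-polytope boundary, and every remaining neighbour of $\{i_1,b\}$---the already-reinserted $\{i_1,c\}$'s and the $3$-sets $ci_1i_2$, $ci_1i_3$ with $c\ne b$, which are \emph{not} confined to $c<i_1$ as your last paragraph suggests---can then be added with a contractible link (cone apices $i_1$, $i_1i_2$, $i_1i_3$ respectively). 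You should carry this out rather than defer it. Second, the aside about factoring the inclusion through a star does not work: one needs null-homotopy into the complex \emph{without} the new vertex, where that star is unavailable; your dimension argument ($\pi_4$ of a wedge of $5$-spheres vanishes) is the correct justification.
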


\begin{proof}% We divide the vertices of lk$_K(A)$ into the following collections $\mathcal{G}_\ell$ for $\ell=0, 1, \ldots, i_3$, where $\mathcal{G}_0$ contains the proper subsets of $A$ in lk$_K(A)$, i.e. $\{i_1, i_2, i_3, i_1i_2, i_1i_3, i_2i_3 \}$; $\mathcal{G}_\ell=\emptyset$ if $\ell=i_1,i_2$, or $i_3$; and otherwise $\mathcal{G}_\ell$ is consisting the vertices in  lk$_K(A)$ which contains $\ell$. It is straightforward to verify that lk$_K(A)^{(0)}=\bigcup_{\ell=0}^{i_3} \mathcal{G}_\ell$. For $\ell< i_1$, $\mathcal{G}_{\ell}=\{\ell i_1, \ell i_2, \ell i_3, \ell i_1i_2, \ell i_1i_3, \ell i_2i_3 \}$,  hence by Lemma~\ref{XXX},  $\mathcal{VR}(\mathcal{G}_\ell, 3)$ is a complex generated by a $5$-simplex. Similarly for $i_1<\ell<i_2$, $\mathcal{VR}(\mathcal{G}_\ell, 3)$ is a complex generated by a $4$-simplex with vertices $\{ i_1 \ell, \ell i_2, \ell i_3, i_1\ell i_2, i_1\ell i_3\}$. And for $i_2<\ell<i_3$, $\mathcal{VR}(\mathcal{G}_\ell, 3)$ is a a complex generated by $3$-simplex with vertices $\{ i_1\ell,  i_2\ell,  i_3\ell, i_1 i_2\ell\}$.
Let $L=\text{lk}_K(A)$. For $\ell=0,1, \ldots, n$, define $\mathcal{G}_\ell$  in the following way:
\begin{itemize}
\item[1)] $\mathcal{G}_0=\{i_1, i_2, i_3, i_1i_2, i_1i_3, i_2i_3\}$;
\item[2)] $\mathcal{G}_\ell=\emptyset$ if $\ell = i_1, i_2,$ or $i_3$;
\item[3)] $\mathcal{G}_\ell$ is consisting of all of the vertices in $L$ containing $\ell$ if $\ell \neq 0, i_1, i_2,$ or $i_3$
\end{itemize}
It is straightforward to see that $L^{(0)}=\bigcup_{\ell=0}^n \mathcal{G}_\ell$ and $\mathcal{G}_\ell$ forms a simplex in $L$ for each $\ell\neq  i_1, i_2$ or $i_3$. For $\ell<i_1$, $\mathcal{G}_\ell=\{\ell i_1, \ell i_2, \ell i_3, \ell i_1 i_2, \ell i_1 i_3, \ell i_2 i_3\}$ forms a $5$-simplex in $L$.  For $i_1<\ell<i_2$, $\mathcal{G}_\ell=\{i_1\ell, \ell i_2, \ell i_3, i_1\ell i_2, i_1\ell i_3\}$ forms a $4$-simplex since $\ell i_2 i_3\succ A$, i.e. $\ell i_2 i_3\notin K$, hence not in $L$. For similar reason,  when $i_2<\ell<i_3$,  $\mathcal{G}_\ell=\{i_1\ell, i_2\ell, \ell i_3, i_1i_2\ell\}$ forms a $3$-simplex in $L$; and when $\ell>i_3$,
 $\mathcal{G}_\ell=\{i_1\ell, i_2\ell, i_3\ell\}$ forms a $2$-simplex in $L$ .

\medskip

We define $L_\ell=\mathcal{VR}(\bigcup_{i=0}^\ell\mathcal{G}_i; 3)$ for $\ell=0, 1, \ldots, n$. Clearly,  $L=L_n$. We claim that SC$_{L_\ell}(L_{\ell-1})$ is homotopy equivalent to $L_{\ell-1}$ for each $\ell=1, 2, \ldots, n$. Clearly $\mathcal{G}_\ell \subset \text{SC}_{L_\ell}(L_{\ell-1})$ and we'll show that lk$_{L_\ell}(s)\cap L_{\ell-1}$ is either empty or contractible for each simplex  $s$ in  $\mathcal{VR}(\mathcal{G}_\ell; 3)$. Then the claim holds by Corollary~\ref{sc_homo_cor}. Take a simplex $s$ in $\mathcal{VR}(\mathcal{G}_\ell; 3)$. Without loss of generality, we assume $\ell <i_1$.  First, we assume that $s$ is a $0$-simplex in $\mathcal{VR}(\mathcal{G}_\ell; 3)$ and again without loss of generality, assume  that  $s=\{\ell i_1 i_2\}$ or $\{\ell i_1\}$.
\begin{itemize}

\item[i)]
Suppose $s=\{\ell i_1 i_2\}$. Then (lk$_{L_\ell}(s)\cap L_{\ell-1})^{(0)}= \{i_1, i_2, i_1i_2,i_1i_3, i_2i_3\}\cup \{ki_1i_2: k<\ell\}$. Note that the distance between the vertex $i_1$ (or $i_1i_2$) and any other vertex in lk$_{L_\ell}(B)\cap L_{\ell-1}$ is $\leq 3$. Hence, lk$_{L_\ell}(B)\cap L_{\ell-1}$ is a cone, therefore contractible.

\item[ii)] Suppose  $s=\{\ell i_1\}$. Then, (lk$_{L_\ell}(s)\cap L_{\ell-1})^{(0)}=\{i_1, i_2, i_3, i_1i_2,i_1i_3\}\cup\{ki_1i_2: k<\ell \}\cup \{ki_1i_3: k<\ell\}$. Again, the distance between the vertex and any other vertex in lk$_{L_\ell}(B)\cap L_{\ell-1}$ is $\leq 3$.   Therefore, lk$_{L_\ell}(B)\cap L_{\ell-1}$ is a cone, therefore contractible.

\end{itemize}
If $s=\mathcal{G}_\ell$, then lk$_{L_\ell}(s)\cap L_{\ell-1}=\emptyset$. Similarly, it is straightforward to show that lk$_{L_\ell}(s)\cap L_{\ell-1}$ is contractible  if $s$ is a $p$-simplex in $\mathcal{VR}(\mathcal{G}_\ell; 3)$ for $p=1, 2, 3, 4$.

\medskip

Next we investigate the homotopy type of $L=L_n$. And we start from the homotoy type of $L_1$.   Let $\sigma_1=\mathcal{G}_1$ which is a $5$-simplex in $K$ and $K_{\sigma_1}$ be the complex generated by $\sigma_1$.  Since $L_0$ is also a complex generated by a $5$-simplex, $L_1= \text{SC}_{L_1}(L_0)\cup K_{\sigma_1}=\Sigma(\text{SC}_{L_1}(L_0)\cap K_{\sigma_1})$ by Lemma~\ref{cup_simp}. Note that st$_{L_1}(i_1i_2)\cap \sigma_1=\{1i_1, 1i_2, 1i_1i_2, 1i_1i_3, 1i_2i_3\}=\sigma_1\setminus\{1i_3\}$  and st$_{L_1}(i_1)\cap \sigma_1=\{1i_1, 1i_2, 1i_3 1i_1i_2, 1i_1i_3\}=\sigma_1\setminus\{1i_2i_3\}$. So, by going through st$_{L_1}(B)$ for each $B\in L_0$, we recognize that $\sigma_1 \notin \text{SC}_{L_1}(L_0)$ but each proper face of  $\sigma_1$ is $\text{SC}_{L_1}(L_0)$, i.e.,  the boundary of $\sigma_1$ is $\text{SC}_{L_1}(L_0)$.  Hence, $\text{SC}_{L_1}(L_0)\cap \sigma_1$ is is the boundary of $\sigma_1$ which is
homotopy equivalent to $S^4$; and hence, $L_1\simeq S^5$.

Now we fix an general $\ell<i_1$ and $\sigma_\ell$ be the simplex with vertices in $\mathcal{G}_\ell$ and assume that $L_{\ell-1}\simeq \text{SC}_{L{\ell}(L_{\ell-1})}\simeq \bigvee_{\ell-1}S^5$.  Let $K_{\sigma_\ell}$ be the complex generated by $\sigma_\ell$. Note that for each $k< \ell$, st$_{L_\ell}(k i_1)\cap K_{\sigma_\ell}\subseteq \text{st}_{K_\ell}(i_1)\cap K_{\sigma_\ell}$ and st$_{L_\ell}(k i_1i_2)\cap K_{\sigma_\ell} \subseteq \text{st}_{K_\ell}(i_1i_2) \cap K_{\sigma_\ell}$. Hence in general,  SC$_{L_\ell}(L_{\ell-1})\cap K_{\sigma_\ell}=\text{SC}_{L_\ell}(L_{0})\cap K_{\sigma_\ell}$. Then similar as above, $\text{SC}_{L_\ell}(L_{0})\cap K_{\sigma_\ell}$ is homotopy equivalent to $S^4$ which is  null-homotopic in both $\text{SC}_{L_\ell}(L_{0})$ and $K_{\sigma_\ell}$. By Lemma~\ref{cup_simp},   $L_\ell\simeq L_{\ell-1} \vee S^5=\bigvee_{\ell}S^5$.  Specifically,  we get that $L_{i_1-1}\simeq \bigvee_{i_1-1} S^5$.

%Let $\sigma_0$ be the simplex in lk$_K(A)$ with the vertices $\{i_1, i_2, i_3, i_1i_2, i_1i_3, i_2i_3\}$. By Lemma~\ref{XXX}, the star cluster of $\sigma_0$ in lk$_K(A)$, SC$_{\text{lk}_K(A)}(\sigma_0)$, is contractible; hence lk$_K(A)/\text{SC}_{\text{lk}_K(A)(\sigma_0)}$ is homotopy equivalent to lk$_K(A)$. For each $\ell<i_1$, let $\sigma_\ell$ be the $5$-simplex with vertices $\{\ell i_1, \ell i_2, \ell i_3, \ell i_1 i_2, \ell i_1 i_3, \ell i_2 i_3\}$. By investigating st$(B)$ for each $B\in \sigma_0$, we recognize that $\sigma_\ell \notin \text{SC}_{\text{lk}_K(A)}(\sigma_0)$ but each proper face of  $\sigma_\ell$ is $\text{SC}_{\text{lk}_K(A)}(\sigma_0)$ in for each $\ell<i_1$.

Finally we fix $\ell>i_1$. Then any vertex in $\mathcal{G}_\ell$ with size $3$ must contains $i_1$ because the vertex $\ell i_2i_3$ is not in $K$. Note that when $\ell>i_3$, $\mathcal{G}_\ell$ contains only vertices of size $2$. Then in either cases, the simplex $\sigma_\ell=   \mathcal{G}_\ell$ is contained in $ \text{st}_{K_\ell}(i_1)$, hence SC$_{L_\ell}(L_{\ell-1})=L_\ell$.  So inductively, we get that $L=L_n\simeq L_{i_1-1}\simeq \bigvee_{i_1-1}S^5$.   \end{proof}

Then inductively, we obtain the homotopy type of the complex $\mathcal{VR}(\mathcal{F}_1^n\cup \mathcal{F}_2^n\cup \mathcal{F}_3^n; 3)$ in the following theorem.

\begin{theorem}\label{homo_f1uf2uf3} For $n\geq 4$, the Vietoris-Rips complex $\mathcal{VR}(\mathcal{F}_1^n\cup \mathcal{F}_2^n\cup \mathcal{F}_3^n; 3)$ is homotopy equivalent to ${n\choose 4}$-many copies of $S^6$.
\end{theorem}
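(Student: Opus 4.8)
The plan is to run an induction on the total order $\prec$ restricted to $\mathcal{F}_1^n\cup\mathcal{F}_2^n\cup\mathcal{F}_3^n$, peeling off the $\prec$-largest vertex at each step and recording the contribution to the homotopy type using Lemma~\ref{complex_add_1v}. Concretely, for a vertex $A$ with $|A|\le 3$ let $K_{\preceq A}=\mathcal{VR}\big((\mathcal{F}_1^n\cup\mathcal{F}_2^n\cup\mathcal{F}_3^n)\cap\mathcal{F}_{\preceq A}^n;3\big)$ and $K_{\prec A}$ the analogous complex on $\mathcal{F}_{\prec A}^n$. Then $K_{\preceq A}=K_{\prec A}\cup\mathrm{st}_{K_{\preceq A}}(A)$ and $K_{\prec A}\cap\mathrm{st}_{K_{\preceq A}}(A)=\mathrm{lk}_{K_{\preceq A}}(A)$. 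Since the link is a wedge of $S^5$'s (by Lemma~\ref{lk_A_f1f2f3} when $|A|=3$, and it is easy to check that the link is contractible/a cone when $|A|\le 2$ because then every other vertex of $\mathcal{F}_{\le 3}^n$ within distance $3$ together forms a cone over a vertex of size $\le 2$), and the ambient complex is a wedge of spheres of dimension $\ge 5$ (a codimension consideration, since we will have inductively identified $K_{\prec A}$ as such a wedge — one checks the inclusion is null-homotopic via Lemma~\ref{null_homo}), Lemma~\ref{complex_add_1v} gives $K_{\preceq A}\simeq K_{\prec A}\vee\Sigma(\mathrm{lk}_{K_{\preceq A}}(A))\simeq K_{\prec A}\vee\bigvee_{(\text{\# of }S^5\text{'s in the link})}S^6$.

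Next I would do the bookkeeping. The $\prec$-smallest vertices of size $3$ are those $A=i_1i_2i_3$ with $i_1=1$; for those, Lemma~\ref{lk_A_f1f2f3} does not literally apply (it assumes $i_1\ge 2$), but the link is then a cone with apex a size-$2$ subset of $A$ and hence contributes nothing — alternatively one checks directly that $\mathrm{lk}(A)$ is contractible, matching "$\bigvee_{i_1-1}S^5=\bigvee_0 S^5$". For $A=i_1i_2i_3$ with $i_1\ge 2$ the link contributes $i_1-1$ copies of $S^6$. So the total number of $S^6$'s is $\sum_{A=i_1i_2i_3,\ 1\le i_1<i_2<i_3\le n}(i_1-1)$, where the base of the induction is the contractible complex $\mathcal{VR}(\mathcal{F}_{\le 3}^n;3)$ restricted to $\mathcal{F}_1^n\cup\mathcal{F}_2^n$ — which, being a cone (it has $\emptyset$... actually here it is a cone with apex $\{1\}$ or similar, anyway contractible as noted for $\mathcal{VR}(\mathcal{F}_{\le 3}^m;3)$). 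The final arithmetic step is the identity
\[
\sum_{1\le i_1<i_2<i_3\le n}(i_1-1)\;=\;\binom{n}{4},
\]
which follows, e.g., by observing that $(i_1-1)$ counts the choices of a fourth index $i_0<i_1$, so the left side counts $4$-element subsets $\{i_0,i_1,i_2,i_3\}$ of $[n]$, each exactly once.

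The main obstacle is the null-homotopy hypothesis in Lemma~\ref{complex_add_1v}: one must verify at each inductive step that the inclusion $\mathrm{lk}_{K_{\preceq A}}(A)\hookrightarrow K_{\prec A}$ is null-homotopic. The clean way to handle this is to find, for each $A$, a subcomplex $L_1\subseteq K_{\prec A}$ with $\mathrm{lk}_{K_{\preceq A}}(A)\subseteq L_1$ that is contractible (then Lemma~\ref{null_homo} finishes it); a natural candidate is $\mathrm{st}_{K_{\prec A}}(v)$ for a suitable low-cardinality vertex $v\subset A$, since the vertices in the link all lie within distance $3$ of such a $v$. Once that is set up, the rest is the routine induction and the combinatorial identity above. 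Alternatively, and perhaps more transparently, one can phrase the whole induction through Lemma~\ref{sc_cup_maxfacets} or Corollary~\ref{sc_homo_cor} applied with $L=K_{\prec A}$: the $\mathrm{SC}$-machinery of Section~\ref{sc_sc} was built precisely to package these link computations, and feeding in the maximal-face description from Corollary~\ref{max_face_1_2} together with Lemma~\ref{lk_A_f1f2f3} should let the wedge summands accumulate cleanly step by step.
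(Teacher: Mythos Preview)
Your outline matches the paper's proof almost exactly: induct along $\prec$, apply Lemma~\ref{lk_A_f1f2f3} at each size-$3$ vertex $A=i_1i_2i_3$ with $i_1\ge 2$, observe that the base (all of $\mathcal{F}_1^n\cup\mathcal{F}_2^n$ together with the size-$3$ sets containing $1$) is a cone on $\{1\}$, and finish with the identity $\sum_{i_1<i_2<i_3}(i_1-1)=\binom{n}{4}$. The one point where you overshoot is the null-homotopy you flag as ``the main obstacle'': the paper disposes of it in a line, since inductively $K_{\prec A}\simeq\bigvee S^6$ and $\mathrm{lk}(A)\simeq\bigvee S^5$, so the inclusion is null-homotopic for purely dimensional reasons (as noted at the start of Section~\ref{sc_sc}); your proposed fallback of a single star $\mathrm{st}_{K_{\prec A}}(v)$ for $v\subset A$ will \emph{not} in general contain the whole link when $i_1\ge 2$ (e.g.\ $\ell i_2 i_3$ and $\ell' i_1$ cannot both lie within distance $3$ of any common $v$), so you should simply commit to the dimension argument you already sketched in your first paragraph.
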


\begin{proof} First, let $K=\mathcal{VR}(\mathcal{F}_1^n\cup \mathcal{F}_2^n\cup \mathcal{F}_3^n; 3)$ and we denote $\mathcal{G}=\mathcal{F}_1^n\cup \mathcal{F}_2^n \cup \{B: B\subset [n], |B|=3, \text{ and }1\in B\}$. The since the distance between $\{1\}$ and any other set in $\mathcal{G}$ is $\leq 3$, the complex $\mathcal{VR}(\mathcal{G}; 3)$  is a cone; hence it is contractible.

For each $A\subset [n]$, we define $L_A= \mathcal{VR}((\mathcal{F}_1^n\cup \mathcal{F}_2^n\cup \mathcal{F}_3^n)\cap \mathcal{F}_{\preceq A}; 3)$. If $A=\{2, 3, 4\}$, lk$_{L_A}(A)$ is homotopy equivalent to $S^5$ by Lemma~\ref{lk_A_f1f2f3}, hence $L_{A}$ is homotopy equivalent to $S^6$ by Lemma~\ref{cup_simp}. For any nonempty set $A\subseteq [n]$, denote $A^-$ be the immediate predecessor of $A$ in the total ordering `$\prec$' in $\mathcal{P}([n])$.  Note that any mapping from a wedge sum of $S^5$'s to a wedge sum of $S^6$'s is null-homotopic.  Inductively, for $A=i_1i_2i_3$ with $i_1\geq 2$, $L_A$ is homotopy equivalent to $L_{A^-}\vee (\bigvee_{i_1-1}S^6)$ by Lemma~\ref{cup_simp} and Lemma~\ref{lk_A_f1f2f3}. Hence inductively, the following holds: $$K\simeq \bigvee_{\sum_{A\in \mathcal{F}^n_3} (\min A-1)} S^6. $$

For each $k=2, 3, \ldots, n-2$, there are ${n-k\choose 2}$ many $3$-subsets of $[n]$ containing $k$ and $k$ being minimal of the sets. Also note that there is no $3$-subset of $[n]$ containing $k$ as minimal for $k=n-1$ or $n$.  So the complex $K$ is homotopy equivalent to the following:

$$\bigvee_{\sum_{k=2}^{n-2}(k-1)\cdot {n-k\choose 2}}S^6.$$

By an induction, it is straightforward to see that $$\sum_{k=2}^{n-2}(k-1)\cdot {n-k\choose 2} ={n\choose 4}.$$ And, this finishes the proof of the theorem. % the argument is that choose 4 numbers from [n] which includes $k$ for some 2\leq k \leq n-2 which contains one number less than k and two numbers greater than k. This counts all the different choices of 4 subsets of [n].
\end{proof}

%\begin{lemma} Let $K$ be a simplicial complex which is homotopy equivalent to $S^m$ and $L$ a subcomplex of $K$ which is homotopy equivalent to $S^n$ with $n>m$. Then the inclusion map $\imath: L\rightarrow K$ is null-homotopic.  \end{lemma}

%\begin{cor} Let $K$ be a simplicial complex which is homotopy equivalent to $\bigvee_k S^m$ and $L$ a subcomplex of $K$ which is homotopy equivalent to $\bigvee_\ell S^n$ with $n>m$. Then the inclusion map $\imath: L\rightarrow K$ is null-homotopic. \end{cor}

%The results above are wrong!

\section{Embedding \lowercase{lk}$_{\mathcal{VR}(\mathcal{F}^m_{\preceq A};3)}(A)$ in the complex $\mathcal{VR}(\mathcal{F}^m_{\prec A};3)$ for $A\subseteq [m]$}\label{lk_A_sec}

Pick $m\geq 5$. Let $A$ be a subset of $[m]$ with size $\geq 4$. We divide this section in two parts. In the first part, we'll determine the homotopy types of the link of $A$ in the complex  $\mathcal{VR}(\mathcal{F}^m_{\preceq A}; 3)$ (Lemma~\ref{lk_A_preceq}), namely lk$_{\mathcal{VR}(\mathcal{F}^m_{\preceq A}; 3)}(A)$ ; in the second part, we'll show that the complex lk$_{\mathcal{VR}(\mathcal{F}^m_{\preceq A};3)}(A)$ is null-homotopic in  $\mathcal{VR}(\mathcal{F}^m_{\prec A}; 3)$ by finding subcomplex $L$ of $\mathcal{VR}(\mathcal{F}^m_{\prec A}; 3)$ which contains  lk$_{\mathcal{VR}(\mathcal{F}^m_{\preceq A};3)}(A)$ but homotopy equivalent to a wedge sum of spheres with dimension $3$ (Lemma~\ref{homo_null}). These two results allow us to inductively describe the homotopy types of $\mathcal{VR}(Q_m; 3)$ in next section.

\subsection{Homotopy types of lk$_{\mathcal{VR}(\mathcal{F}^m_{\preceq A}; 3)}(A)$ for $|A|\geq 4$}

Note that for any two subsets $A$ and $B$ of $[m]$, $d(A, B)=|A\Delta B|=|([m]\setminus A)\cup ([m]\setminus B)|=d([m]\setminus A, [m]\setminus B)$. Therefore, if  $\mathcal{F}^m$ be a collection of subsets of $[m]$, then  the metric space  $\mathcal{F}^m_c$ is isometric to $\mathcal{F}^m$, here  $\mathcal{F}^m_c$ is  be the collection of subsets of $[m]$ whose complement is in $\mathcal{F}^m$. Hence we get the following result.

\begin{lemma}\label{homo_compl} For any $r\geq 0$ and any collection $\mathcal{F}^m$ of subsets of $[m]$, the simplicial complex $\mathcal{VR}(\mathcal{F}^m; r)$ is isomorphic to $\mathcal{VR}(\mathcal{F}^m_c; r)$. \end{lemma}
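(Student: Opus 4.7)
The plan is to exhibit an explicit isometry between $\mathcal{F}^m$ and $\mathcal{F}^m_c$ and then invoke the earlier observation that isometric metric spaces have isomorphic (in fact, homeomorphic) Vietoris--Rips complexes at every scale.

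First, I would define the complementation map $\varphi\colon \mathcal{F}^m\to \mathcal{F}^m_c$ by $\varphi(A)=[m]\setminus A$. By the very definition of $\mathcal{F}^m_c$, the image of $\varphi$ is exactly $\mathcal{F}^m_c$, and since $\varphi$ is an involution on the power set of $[m]$ it restricts to a bijection between $\mathcal{F}^m$ and $\mathcal{F}^m_c$.

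Next, I would verify that $\varphi$ is distance-preserving. The key identity is
\[
\bigl([m]\setminus A\bigr)\,\Delta\,\bigl([m]\setminus B\bigr)=A\,\Delta\,B
\]
for every $A,B\subseteq[m]$; this follows by rewriting the left-hand side as $(([m]\setminus A)\cap B)\cup(([m]\setminus B)\cap A)=(B\setminus A)\cup(A\setminus B)$. Taking cardinalities gives $d(\varphi(A),\varphi(B))=d(A,B)$, so $\varphi$ is an isometry.

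Finally, I would invoke the remark made earlier in Section~\ref{sc_sc} (paragraph on isometries) stating that whenever two metric spaces are isometric, their Vietoris--Rips complexes at the same scale are homeomorphic: the simplicial isomorphism is induced vertex-wise by the isometry, since the simplex condition $\mathrm{diam}(\sigma)\leq r$ is preserved. Applying this to $\varphi$ yields the desired isomorphism $\mathcal{VR}(\mathcal{F}^m;r)\cong \mathcal{VR}(\mathcal{F}^m_c;r)$ for every $r\geq 0$. There is no real obstacle here; the only nontrivial content is the symmetric-difference identity above, and everything else is a direct application of the isometry-invariance of $\mathcal{VR}(\cdot;r)$.
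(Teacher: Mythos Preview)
Your proposal is correct and follows exactly the paper's approach: the paper observes just before the lemma that complementation is an isometry (via the identity $([m]\setminus A)\Delta([m]\setminus B)=A\Delta B$) and then immediately deduces the lemma from the earlier remark that isometric metric spaces have isomorphic Vietoris--Rips complexes. Your write-up is slightly more detailed but otherwise identical in strategy and content.
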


\begin{lemma}\label{mx_facets} Let $A=i_1i_2\cdots i_n$ be a subset of $[m]$. Fix $\ell \notin A$ and define $\mathcal{G}_\ell=\{B:  B\setminus A=\{\ell\}\text{ and }|B|=n \text{ or }n-1\}$. Let $K=\mathcal{VR}(\mathcal{G}_\ell; 3)$.

Then the maximal facets in $K$ are the intersections of $K$ with one of the following:

\begin{itemize}
\item[i)] $N[\{\ell\}\cup A]\cup  N[\{\ell\}\cup B]$ where $B$ is an $n-1$-subset of $A$.

\item[ii)] $N[\{\ell\}\cup A] \cup H_{\{\ell\}\cup A}^{i_a, i_b, i_c}$ where $i_a, i_b, i_c\in A$.
\end{itemize}

\end{lemma}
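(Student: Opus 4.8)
The plan is to compute the maximal facets of $K=\mathcal{VR}(\mathcal{G}_\ell;3)$ by invoking Lemma~\ref{mx_f_subcmplx}: since $\mathcal{G}_\ell$ is a subset of $Q_m$ and $K$ is a clique full subcomplex of $\mathcal{VR}(Q_m;3)$, every maximal facet of $K$ has the form $\tau\cap \mathcal{G}_\ell$ for some $\tau\in M(\mathcal{VR}(Q_m;3))$. So I would run through the three families $\mathcal{A}$, $\mathcal{B}$, $\mathcal{C}$ of maximal faces of $\mathcal{VR}(Q_m;3)$ described after Lemma~\ref{dist_3} (following Shukla) and intersect each with $\mathcal{G}_\ell$, keeping only the intersections that turn out to be maximal in $K$.

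First I would record what $\mathcal{G}_\ell$ looks like: writing $A'=\{\ell\}\cup A$, a set $B$ lies in $\mathcal{G}_\ell$ iff $\ell\in B$, $B\setminus\{\ell\}\subseteq A$, and $|B|\in\{n-1,n\}$; equivalently $B=A'$ or $B=A'\setminus\{i_a\}$ for some $i_a\in A$. Thus $\mathcal{G}_\ell$ has exactly $n+1$ vertices, all containing $\ell$, and all of size $n-1$ or $n$. By Lemma~\ref{dist_3}, any two of these are within distance $3$ (they differ only inside $A'$, and the symmetric difference of $A'$ and $A'\setminus\{i_a,i_b\}$ has size $2$; for two different "drop one" sets $A'\setminus\{i_a\}$, $A'\setminus\{i_b\}$ the distance is $2$), so in fact $\mathcal{G}_\ell$ spans a single simplex of dimension $n$ — wait, that would make $K$ a cone. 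I need to be more careful: the point is that $K$ here is not $\mathcal{VR}(\mathcal{G}_\ell;3)$ as an abstract complex on these $n+1$ vertices in isolation, but rather this is meant as a building block and the statement is really about which of the ambient maximal faces restrict to maximal faces; so I would double-check the intended reading of $\mathcal{G}_\ell$ against its use in Lemma~\ref{lk_A_f1f2f3} and Lemma~\ref{lk_A_preceq}, where $\mathcal{G}_\ell$ collects vertices of the \emph{link} and hence the relevant "size $n$ or $n-1$" refers to sizes within the link complex $\mathcal{F}^m_{\preceq A}$, i.e. the effective ambient complex is $\mathcal{VR}(\mathcal{F}^m_{\preceq A};3)$ restricted appropriately, not all of $Q_m$.

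With the correct ambient complex fixed, the computation is mechanical: intersecting a face of type $\mathcal{B}$, namely $N[C]\cup N[D]$ with $|C\Delta D|=1$, with $\mathcal{G}_\ell$ yields a nonempty maximal face only when $C$ and $D$ can be taken as $A'=\{\ell\}\cup A$ and $\{\ell\}\cup B$ for an $(n-1)$-subset $B$ of $A$ (any other choice of $C,D$ either misses $\ell$ or leaves the part of the neighborhood inside $\mathcal{G}_\ell$ a proper face of one already obtained), giving family i); intersecting a face of type $\mathcal{A}$, namely $N[C]\cup H_C^{j_1,j_2,j_3}$, with $\mathcal{G}_\ell$ forces $C=\{\ell\}\cup A$ and the three indices $i_a,i_b,i_c$ to be drawn from $A$ (an index equal to $\ell$ or outside $A'$ pushes the $H$-vertices out of $\mathcal{G}_\ell$), giving family ii); and faces of type $\mathcal{C}$, being $7$-simplices built from eight $S^0$-joins spread across sizes ranging over $|A|{-}$ up to $|A|{+}4$, always have at least two vertices outside $\mathcal{G}_\ell$'s size window, so their intersections with $\mathcal{G}_\ell$ are proper faces of faces already listed and contribute nothing new. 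Finally I would check maximality: each listed set is genuinely a facet of $K$ because adding any further vertex of $\mathcal{G}_\ell$ would create a pair at distance $>3$ (for i) the vertex $\{\ell\}\cup B'$ with $B'\neq B$ another $(n-1)$-subset, together with the "$N[\cdot]$" partners, breaks diameter $3$; for ii) a fourth dropped index does likewise). The main obstacle is purely bookkeeping — correctly enumerating the intersections of the type-$\mathcal{A}$/$\mathcal{B}$/$\mathcal{C}$ facets with $\mathcal{G}_\ell$ and verifying that the two listed families are exactly the maximal ones, with no omissions and no redundancies — so the proof will be a careful but routine case analysis rather than requiring a new idea.
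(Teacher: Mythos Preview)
Your proposal contains a concrete error in identifying the vertex set of $\mathcal{G}_\ell$. Writing $A'=\{\ell\}\cup A$, you claim the elements of $\mathcal{G}_\ell$ are $A'$ and the sets $A'\setminus\{i_a\}$, giving $n+1$ vertices. But $|A'|=n+1$, so $A'\notin\mathcal{G}_\ell$; and you have omitted the sets of size $n-1$, namely $A'\setminus\{i_a,i_b\}$ for $i_a,i_b\in A$. In fact $\mathcal{G}_\ell$ consists of the $n$ sets $\{\ell\}\cup C$ with $C$ an $(n-1)$-subset of $A$ together with the $\binom{n}{2}$ sets $\{\ell\}\cup C$ with $C$ an $(n-2)$-subset of $A$, for a total of $n+\binom{n}{2}$ vertices. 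This miscount is what led you to the false conclusion that $\mathcal{G}_\ell$ spans a single simplex and then to an incorrect reinterpretation of the statement; the lemma really is about $\mathcal{VR}(\mathcal{G}_\ell;3)$ as stated, with no hidden ambient restriction.

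Once the vertex set is corrected, your strategy via Lemma~\ref{mx_f_subcmplx} and a case analysis over the families $\mathcal{A}$, $\mathcal{B}$, $\mathcal{C}$ is viable, but it is considerably more work than what the paper does. The paper observes that the map $B\mapsto B\setminus\{\ell\}$ is an isometry from $\mathcal{G}_\ell$ onto the collection $\mathcal{H}$ of $(n-1)$- and $(n-2)$-subsets of $A$; taking complements in $A$ then gives an isometry onto $\mathcal{F}_1^n\cup\mathcal{F}_2^n$. The maximal faces of $\mathcal{VR}(\mathcal{F}_1^n\cup\mathcal{F}_2^n;3)$ are already listed in Corollary~\ref{max_face_1_2}, and transporting them back through the two isometries yields exactly the families i) and ii). This bypasses the ambient classification of $M(\mathcal{VR}(Q_m;3))$ entirely and reduces the lemma to a two-line translation.
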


\begin{proof} Let $\mathcal{H}$ be the collection of all  subsets of $A$ with sizes $n-1$ or $n-2$ and $\mathcal{H}^c$ the collection of all  subsets of $A$ with sizes $1$ or $2$. Clearly, the metric space $\mathcal{G}_\ell$ is isometric to $\mathcal{H}$. By the operation of taking complement, $\mathcal{H}$ is isometric to $\mathcal{H}^c$, which is clearly isometric to $\mathcal{F}_1^n\cup \mathcal{F}_2^n$. Then by Corollary~\ref{max_face_1_2}, the maximal faces in  the complex $\mathcal{VR}(\mathcal{G}_\ell;3)$ are the ones in the statement of lemma. \end{proof}

\begin{lemma}\label{lk_A_preceq} Let $A=i_1i_2\cdots i_n$ be a subset of $[m]$ with $n\geq 4$. Define $r_k=i_k-(i_{k-1}+1)$ for $k=4, 5, \ldots, n$; and also define $K=\mathcal{VR}(\mathcal{F}^m_{\preceq A}; 3)$.

Then The link of $A$ in the complex $K$, namely lk$_K(A)$, is homotopy equivalent to

$$(\bigvee_{n\choose 4} S^6)\vee \bigvee_{\sum_{k=4}^{n} r_k{k-1\choose 3}}S^3.$$

\end{lemma}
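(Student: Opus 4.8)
My plan is to describe $\text{lk}_K(A)$ explicitly as a Vietoris–Rips complex on a partitioned vertex set, and then build it up one block at a time using the star-cluster machinery of Section~\ref{sc_sc}. Write $A=i_1i_2\cdots i_n$. By Lemma~\ref{dist_3}, a vertex $B\in\mathcal{F}^m_{\preceq A}$ satisfies $d(A,B)\le 3$ and $B\ne A$ exactly when either (a) $B\subsetneq A$ with $|B|\in\{n-1,n-2,n-3\}$, or (b) $|B|=n$ with $|B\setminus A|=1$ (so $B=(A\setminus\{i_a\})\cup\{\ell\}$ for some $i_a\in A$, $\ell\notin A$, subject to $B\prec A$), or (c) $|B|=n-1$ with $|B\setminus A|=1$, or (d) $|B|=n+1$ with $A\subsetneq B$ (but these have $B\succ A$, so are excluded). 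So $\text{lk}_K(A)$ is the full subcomplex of $\mathcal{VR}(\mathcal{F}^m_{\prec A};3)$ on $\mathcal{W}\cup\big(\bigcup_{\ell\notin A}\mathcal{G}_\ell\big)$, where $\mathcal{W}=\{B\subsetneq A: |B|\in\{n-1,n-2,n-3\}\}$ is the "inner" part coming from subsets of $A$, and $\mathcal{G}_\ell$ (as in Lemma~\ref{mx_facets}, intersected with the $\prec A$ constraint) is the "outer" block attached at each $\ell\notin A$.

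First I would identify the homotopy type of the inner complex $L_{\mathcal{W}}:=\mathcal{VR}(\mathcal{W};3)$. Taking complements inside $A$ (Lemma~\ref{homo_compl}), $\mathcal{W}$ is isometric to $\mathcal{F}_1^n\cup\mathcal{F}_2^n\cup\mathcal{F}_3^n$, so by Theorem~\ref{homo_f1uf2uf3} we get $L_{\mathcal{W}}\simeq\bigvee_{\binom n4}S^6$ (for $n\ge 4$; for $n=4$ it is a single $S^6$). Next I would feed in the outer blocks $\mathcal{G}_\ell$ in increasing order of $\ell$. For each $\ell\notin A$ the block $\mathcal{VR}(\mathcal{G}_\ell;3)$ is, by Lemma~\ref{mx_facets} and complementation, governed by a copy of $\mathcal{VR}(\mathcal{F}_1^{n'}\cup\mathcal{F}_2^{n'};3)$-type data whose relevant range of "allowed" subsets is cut down by the constraint $B\prec A$: roughly, only the $i_a$ with $i_a>\ell$ (there are, after accounting for which indices actually lie strictly between consecutive $i_{k-1}$ and $i_k$, about $k-1$ of them when $\ell$ sits in the $k$-th gap) give sets $(A\setminus\{i_a\})\cup\{\ell\}\prec A$. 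I expect each such truncated block, together with how it attaches to what has been built, to contribute (via Lemma~\ref{complex_add_1v} / Lemma~\ref{cup_simp}, exactly as in the proof of Lemma~\ref{lk_A_f1f2f3}) a wedge of $S^3$'s, and the count of these should be $\binom{k-1}{3}$ per index $\ell$ lying in the $k$-th gap. Summing over the $r_k=i_k-(i_{k-1}+1)$ indices in the $k$-th gap, $k=4,\dots,n$ (gaps below $i_4$ cannot produce $3$-subsets of the relevant type, matching the lower limit $k=4$), yields the claimed $\bigvee_{\sum_{k=4}^n r_k\binom{k-1}{3}}S^3$ summand; since $3<6$, all attaching maps of lower blocks into $L_{\mathcal{W}}$ are null-homotopic, so the two wedge summands simply add.

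Concretely the induction I'd run is: order $\{\ell:\ell\notin A\}$ increasingly as $\ell_1<\ell_2<\cdots$; set $L_0=L_{\mathcal{W}}$ and $L_t=\mathcal{VR}\big(\mathcal{W}\cup\bigcup_{s\le t}(\mathcal{G}_{\ell_s}\cap\mathcal{F}_{\prec A})\big)$, so $L_{\text{final}}=\text{lk}_K(A)$. At each step I would verify the three hypotheses of Lemma~\ref{sc_cup_maxfacets}: (i) those maximal facets of $L_t$ meeting $\mathcal{G}_{\ell_t}$ but whose $K_{\sigma_i}\cap\text{SC}(L_{t-1})$ is contractible, versus (ii) the ones producing a genuine $S^3$ that is null-homotopic in $L_{t-1}$, and (iii) that intersections among the new facets already lie in $\text{SC}_{L_t}(L_{t-1})$; the latter two are the combinatorial heart, checked by the same "the vertex $\{\ell\}\cup B$ of small size lies at distance $\le 3$ from everything, so its star is a cone" argument used in Lemma~\ref{lk_A_f1f2f3}. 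The main obstacle, and where I'd spend the real care, is the bookkeeping in step (ii): precisely matching which truncated $\mathcal{G}_\ell$-blocks are contractible versus contribute an $S^3$, and pinning the count at exactly $\binom{k-1}{3}$ for $\ell$ in the $k$-th gap — this requires identifying the truncated block with a complex of the form handled by Lemma~\ref{quo_parial_boundary}/Lemma~\ref{contr_subcompl} and carefully tracking how the $\prec A$ restriction removes facets. Once that identification is made, the wedge-sum count follows by the same elementary summation as at the end of the proof of Theorem~\ref{homo_f1uf2uf3}.
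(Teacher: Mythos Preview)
Your approach matches the paper's: decompose $\text{lk}_K(A)$ into the inner block $\mathcal{G}_0=\mathcal{W}$ and outer blocks $\mathcal{G}_\ell$ for $\ell\notin A$, identify $\mathcal{VR}(\mathcal{G}_0;3)\simeq\bigvee_{\binom n4}S^6$ via complementation and Theorem~\ref{homo_f1uf2uf3}, and then attach the $\mathcal{G}_\ell$ in increasing $\ell$ using Corollary~\ref{sc_homo_cor} together with Lemma~\ref{sc_cup_maxfacets}. One correction to your bookkeeping: for $i_{k-1}<\ell<i_k$ there are $n-k+1$ (not ``about $k-1$'') size-$n$ vertices in $\mathcal{G}_\ell$; the factor $\binom{k-1}{3}$ arises not from these but from counting the maximal facets of $\mathcal{VR}(\mathcal{G}_\ell;3)$ of the form $(N[\{\ell\}\cup A]\cup H^{i_a,i_b,i_c}_{\{\ell\}\cup A})\cap\mathcal{G}_\ell$ with all of $i_a,i_b,i_c\in\{i_1,\dots,i_{k-1}\}$ --- these are exactly the facets whose intersection with $\text{SC}_{L_\ell}(L_{\ell-1})$ is an $S^2$ (Lemma~\ref{quo_parial_boundary}) rather than contractible (Lemma~\ref{contr_subcompl}).
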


\begin{proof} Define $\mathcal{G}_0$ to be the collection of subsets of $A$ with sizes $n-1$, $n-2$, and $n-3$. By a natural transformation, $\mathcal{VR}(\mathcal{G}_0; 3)$ is isomorphic to $\mathcal{VR}(\mathcal{F}_{n-1}^n \cup \mathcal{F}_{n-2}^n \cup \mathcal{F}_{n-3}^n; 3)$ which is isomorphic to $\mathcal{VR}(\mathcal{F}_{1}^n \cup \mathcal{F}_{2}^n \cup \mathcal{F}_{3}^n; 3)$ by Lemma~\ref{homo_compl}. Hence by Theorem~\ref{homo_f1uf2uf3}, the complex $\mathcal{VR}(\mathcal{G}_0; 3)$ is homotopy equivalent to a wedge sum of ${n\choose 4}$-many $S^6$.

We denote $K_A=\text{lk}_K(A)$. For each $k= 1, 2, \ldots, m$, define $\mathcal{G}_k=\emptyset$ if $k=i_1, i_2, \ldots,$ or $i_n$; otherwise, $\mathcal{G}_k=\{B\in K_{A}^{(0)}: k\in B\}$. By Lemma~\ref{dist_3}, it is straightforward to verify that for any  $B\in \mathcal{G}_k$, $B=\{k\}\cup C$ where $C$ is either an $(n-1)$-subset or an $(n-2)$-subset of $A$; hence,
 clearly $\mathcal{G}_k\cap \mathcal{G}_{k'}=\emptyset$ when $k\neq k'$. Also notice that for any $k>i_p$, the vertices in $\mathcal{G}_k$ with size $n$ must contain all of  $\{i_1, i_2, \ldots, i_p\}$ because replacing any of elements in $\{i_1, i_2, \ldots, i_p\}$ by $k$ yields a vertex $\succ A$ which is not in the complex $K$.

\medskip

For each $k\leq m$, we define $L_k=\mathcal{VR}(\bigcup_{\ell=0}^k\mathcal{G}_\ell;3)$. So $L_0$ is homotopy equivalent to ${n\choose 4}$-many copies of $S^6$. We claim that SC$_{L_{k}}(L_{k-1})$ is homotopy equivalent to $L_{k-1}$ for each $k=1, 2, \ldots, m$. This result trivially holds when $k\in A$, i.e. $k=i_1, i_2,\ldots,$ or $i_n$  since $G_k=\emptyset$ for such $k$. Now fix $k$ such that $k\notin A$.  We'll show that for any simplex $s\in \mathcal{VR}(\mathcal{G}_{k}; 3)$, lk$_{L_{k}}(s)\cap L_{k-1}$ is either contractible or an empty set; then by Corollary~\ref{sc_homo_cor}, the claim holds. First We'll show the results hold for the $0$-simplex in $\mathcal{VR}(\mathcal{G}_{k}; 3)$.

By the discussion above, there are at most two different kinds of $0$-simplices, i.e. vertices,  in $\mathcal{VR}(\mathcal{G}_{k}; 3)$, with either size $n$ or size $n-1$. We divide the discussion into two cases below if the corresponding vertex of the size exists in $\mathcal{G}_k$.

\begin{itemize}

\item[i)] Assume $s=\{B\}$ where $B\in \mathcal{G}_{k}$ with size $n$. Then $B$ is consisting of $k$ and an $(n-1)$-subset of $A$, denoted by $C$, i.e. $B=\{k\}\cup C$. Note that $C$ is also a vertex in $\mathcal{G}_0$. Then by Lemma~\ref{dist_3}, the intersection of the link of $s$ in $L_k$, lk$_{L_k}(s)$, with  $L_{k-1}$ contains the following vertices if possible: 1) vertices of size $n$ consisting of numbers in $C$ and a number $k'<k$ with $k'\neq i_1, i_2, \ldots, i_n$; 2) vertices of size $n-1$ containing an $(n-2)$-subset of $C$; 3) vertices of size $n-2$ which are subsets of $C$. All these vertices have a distance $\leq 3$ with the vertex $C$. Hence   lk$_{L_k}(s)\cap L_{k-1}$ is a cone which is contractible.

\item[ii)] Assume $s=\{B\}$ where  $B\in \mathcal{G}_{k}$ with size $n-1$. Then $B$ can be written as $\{k\}\cup C$ where $C$ is an $(n-2)$ subset of $A$. Again by Lemma~\ref{dist_3}, the vertices in lk$_{L_k}(s)\cap L_{k-1}$ is in one of the following forms: 1) vertices of size $n$ containing $C$ ; 2) vertices of size $n-1$ containing of $C$; 3) vertices $D$ of size $n-2$ with $|D\setminus C|\leq 1$ . All these vertices have a distance $\leq 3$ with the vertex $C$. Hence, again lk$_{L_k}(s)\cap L_{k-1}$ is a cone, hence contractible.

\end{itemize}

For higher dimensional simplices in $\mathcal{VR}(\mathcal{G}_{k}; 3)$, the proofs are similar and we skip them.

\medskip

Next we prove that $L_{\ell}\simeq L_{\ell-1}$ for $\ell\leq i_3$, or $\ell\geq i_n$, or $\ell\in A$. This trivially holds when $\ell\in A$ since $\mathcal{G}_\ell=\emptyset$ for such $\ell$. Pick $\ell>i_n$ if exists. Then $\mathcal{G}_\ell$ contains only vertices of size $(n-1)$ which can be written as $\{\ell\}\cup C$ with $C$ being an $(n-2)$-subset of $A$. By Lemma~\ref{mx_facets}, there are two types of maximal facets in $\mathcal{VR}(\mathcal{G}_\ell; 3)$, namely $N[j_1, j_2, \ldots, j_{n-1}, \ell]\cap \mathcal{G}_\ell$ and $N[j_1, \ldots, j_{n-3}, \ell] \cap \mathcal{G}_\ell$ for some $j_1, j_2, \ldots, j_{n-1}\in A$. It is straightforward to verify that for any $j_1, j_2, \ldots, j_{n-1}\in A$, $(N[j_1, j_2, \ldots, j_{n-1}, \ell]\cap \mathcal{G}_\ell)\in \text{st}_{L_\ell}(j_1j_2\cdots j_{n-1})$ and $N[j_1, \ldots, j_{n-3}, \ell] \cap \mathcal{G}_\ell\in \text{st}_{L_\ell}(j_1j_2\cdots j_{n-2})$. Hence SC$_{L_\ell}(L_{\ell-1})=L_\ell$ for such $\ell$ and hence  $L_\ell =\text{SC}_{L_\ell}(L_{\ell-1})\simeq L_{\ell-1}$.

Now we pick $\ell$ with $i_2<\ell<i_3$ if such an $\ell$ exists. For $\ell$ with $\ell<i_1$ or $i_1<\ell<i_2$, the proofs are very similar and we'll skip them. Then by the discussion above, the vertex  in $\mathcal{G}_\ell$ can be written as: either 1), $\{\ell\}\cup C$ with $C$ being an $(n-2)$ subset of $A$; or 2), $\{\ell\}\cup D$ with $D$ being an $(n-1)$-subset of $A$ such that $i_1, i_2\in D$. % size $n$ must contains $i_1, i_2$, $\ell$ and an $(n-3)$-subset of $A\setminus \{i_1, i_2\}$;  the vertices with size $(n-1)$ must contains $\ell$ and an $(n-2)$-subset of $A$. %Divide $\mathcal{G}_\ell$ into $\mathcal{H}'$ and $\mathcal{H}''$ where $\mathcal{H}''$ contains all the vertices in $\mathcal{G}_\ell$ with size $n-1$ and $\mathcal{H}'=\mathcal{G}_\ell\setminus \mathcal{H}''$.
Define $\mathcal{H}'$ to be the collection of vertices in $\mathcal{G}_\ell$ with size $n$. So each vertex in $\mathcal{H}'$ must contain $i_1, i_2$, and $\ell$, i.e., $$\mathcal{H}'=\{i_1i_2\ell i_3\cdots \hat{i_a} \cdots i_n: a=3, 4, \ldots,\text{ or } n\}. $$  %as a subset because replacing $i_1$ or $i_2$ by $\ell$ yields a vertex $\succ A$ which is clearly not in $K_A$.
Note that the distance between any $(n-1)$-subset of $A$ and any vertex in $\mathcal{H}'$ is at most $3$.  By Lemma~\ref{mx_facets}, there are two types of maximal faces in $\mathcal{VR}(\mathcal{G}_\ell; 3)$. We verify the conditions in Corollary~\ref{sc_cup_maxfacets_c} in the following, i.e.  we'll show that the intersection of either kind of the maximal faces with SC$_{L_\ell}(L_{\ell-1})$ is contractible and the intersection of each pair of maximal facets are SC$_{L_\ell}(L_{\ell-1})$. In i) and ii), we show that SC$_{L_\ell}(L_{\ell-1})\cap \sigma$ is contractible for each maximal facets in $\mathcal{VR}(\mathcal{G}_\ell; 3)$. In iii), we show that the intersection of each pair of maximal facets $\sigma$ and $\sigma'$ is in SC$_{L_\ell}(L_{\ell-1})$.  Then by Corollary~\ref{sc_cup_maxfacets_c}, SC$_{L_\ell}(L_{\ell-1})\simeq L_{\ell}$.

\begin{itemize}

\item[i)] Assume that $\sigma =N[\{\ell\}\cup A]\cup  N[\{\ell\}\cup B]\cap \mathcal{G}_\ell$ where $B$ is an $(n-2)$-subset of $A$. Hence $B$ is a vertex in $L_0=\mathcal{VR}(\mathcal{G}_0; 3)$. Then $\sigma \in \text{st}_{L_\ell}(B)$ which  means that SC$_{L_\ell}(L_{\ell-1})\cap \sigma =\sigma$ hence contractible;

\item[ii)] Without loss of generality, we assume that $\sigma =(N[\{\ell\}\cup A]\cup H_{\{\ell\}\cup A}^{i_1, i_2, i_3})\cap \mathcal{G}_\ell$. Then $\sigma=\mathcal{H}'\cup H_{\{\ell\}\cup A}^{i_1, i_2, i_3}$. Let $D=A\setminus \{i_1, i_2, i_3\}$. And then let $B_1=\{\ell, i_1\}\cup D=\{\ell\}\cup A\setminus \{i_2, i_3\}$, $B_2=\{\ell, i_2\}\cup D$, and $B_3=\{\ell, i_3\}\cup D$; hence,  $H_{\{\ell\}\cup A}^{i_1, i_2, i_3}=\{B_1, B_2, B_3\}$. By going through the star of vertices in $L_{\ell-1}$, we get that 1), $\{B_1, B_2, B_3, (\{\ell\}\cup A)\setminus \{i_3\}\}= \text{st}_{L_\ell}(D)\cap \mathcal{G}_\ell$; 2) $\mathcal{H}'\cup \{B_1, B_2\}= \text{st}_{L_\ell}(A\setminus\{i_3\})\cap \mathcal{G}_\ell$; 3) $\mathcal{H}'\cup \{B_1, B_3\}= \text{st}_{L_\ell}(A\setminus\{i_2\})\cap \mathcal{G}_\ell$; 4) $\mathcal{H}'\cup \{B_2, B_3\}= \text{st}_{L_\ell}(A\setminus\{i_1\})\cap \mathcal{G}_\ell$.

For any other vertex $C$ in $L_{\ell-1}$, $\text{st}_{L_\ell}(A\setminus\{i_1\})\cap \mathcal{G}_\ell$ a subset of one of the following: 1), $\text{st}_{L_\ell}(D)\cap \mathcal{G}_\ell$; 2), $\text{st}_{L_\ell}(A\setminus\{i_c\})\cap \mathcal{G}_\ell$; 3), $\text{st}_{L_\ell}(A\setminus\{i_b\})\cap \mathcal{G}_\ell$; or 4), $\text{st}_{L_\ell}(A\setminus\{i_a\})\cap \mathcal{G}_\ell$.  Hence SC$_{L_\ell}(L_{\ell-1}) \cap \sigma$ is a complex with maximal faces: 1), 2), 3), and 4). Then by
 Lemma~\ref{contr_subcompl}, $\text{SC}_{L_\ell}(L_{\ell-1})\cap \sigma$ is contractible.

\item[iii)] Let $\sigma$, $\sigma'$ be maximal facets in $\mathcal{VR}(\mathcal{G}_\ell; 3)$ but not in SC$_{L_\ell}(L_{\ell-1})$. By the discussion in ii) above, it is straightforward to verify that their intersection is SC$_{L_\ell}(L_{\ell-1})$. \end{itemize}

\medskip
Fix $k\geq 4$ with $i_k-(i_{k-1}+1)\neq 0$. Lastly we prove that for $\ell$ with  $i_{k-1}<\ell<i_k$, $L_{\ell}=L_{\ell-1}\vee (\bigvee_{k-1\choose 3}S^3)$. This result establishes that $K_A=L_m\simeq (\bigvee_{n\choose 4} S^6)\vee \bigvee_{\sum_{k=4}^{n} r_k{k-1\choose 3}}S^3$ by an induction. Fix $\ell$ with  $i_{k-1}<\ell<i_k$ and assume that $L_{\ell-1}$ is a wedge sum of spheres with dimensions $6$ and/or $3$. This holds when $\ell=\min\{b: b\in [m]\setminus A\text{ and }b>i_3\}$ because for such an $\ell$, $L_{\ell-1}\simeq\ldots \simeq  L_0$ which is a wedge sum of spheres with dimension $6$. For the simplex $\sigma$ in the form $N[\{\ell\}\cup A]\cup  N[\{\ell\}\cup B]\cap \mathcal{G}_\ell$ or $(N[\{\ell\}\cup A]\cup H_{\{\ell\}\cup A}^{i_a, i_b, i_c})\cap \mathcal{G}_\ell$ with one of $\{i_a, i_b, i_c\}$ $>k$, $\sigma\cap \text{SC}_{L_\ell}(L_{\ell-1})$ is contractible by the discussion in i) and ii) above. Let $\tau$ be a simplex in the form $(N[\{\ell\}\cup A]\cup H_{\{\ell\}\cup A}^{i_a, i_b, i_c})\cap \mathcal{G}_\ell$ with all of $\{i_a, i_b, i_c\}$ $<k$ and $i_a<i_b<i_c$. Note that there are ${k-1\choose 3}$-many simplices in $\mathcal{VR}(\mathcal{G}_\ell; 3)$ in such form. Then we define $\mathcal{H}''$ be the collections of vertices in $\mathcal{G}$ with size $n$. Since $i_a<i_b<i_c<\ell$, each vertex in $\mathcal{H}''$ must contain $\{i_a, i_b, i_c\}$ as a subset. We define $D$, $B_a$, $B_b$, $B_c$ as above, i.e.,  $D=A\setminus \{i_a, i_b, i_c\}$,  $B_a=\{\ell, i_a\}\cup D$, $B_b=\{\ell, i_b\}\cup D$, and $B_c=\{\ell, i_c\}\cup D$. It is straightforward to verify that 1), $\{B_a, B_b, B_c\}= \text{st}_{L_\ell}(D)\cap \mathcal{G}_\ell$; 2) $\mathcal{H}'\cup \{B_a, B_b\}= \text{st}_{L_\ell}(A\setminus\{i_c\})\cap \mathcal{G}_\ell$; 3) $\mathcal{H}'\cup \{B_a, B_c\}= \text{st}_{L_\ell}(A\setminus\{i_b\})\cap \mathcal{G}_\ell$; 4) $\mathcal{H}'\cup \{B_b, B_c\}= \text{st}_{L_\ell}(A\setminus\{i_a\})\cap \mathcal{G}_\ell$. And the intersection of the star of any other vertex in $L_{\ell-1}$ with $\mathcal{G}_\ell$ is a subset of 1), 2), 3) or 4). By Lemma~\ref{quo_parial_boundary}, SC$_{L_\ell}(L_{\ell-1})\cap \sigma$ is homotopy equivalent to $S^2$; so the corresponding inclusion map is null-homotopic.  By going through such simplices, their intersections are also in SC$_{L_\ell}(L_{\ell-1})$.   Therefore by Lemma~\ref{sc_cup_maxfacets}, $L_\ell=L_{\ell-1}\vee (\bigvee_{{k-1\choose 3}}S^3)$. \end{proof}

Then, for any $A=i_1i_2\cdots i_n$ with $n\geq 4$, we define $r_k= i_{k}-i_{k-1}-1$ for $k=4, 5, \ldots, n$ and $$r_A=\sum_{k=4}^{n} r_k{k-1\choose 3}.$$

\subsection{Null-Homotopicness of $\text{lk}_{\mathcal{VR}(\mathcal{F}^m_{\preceq A}; 3)}(A)$ in $\mathcal{VR}(\mathcal{F}^m_{\preceq A}; 3)$  for $|A|\geq 4$}

Let $K=\mathcal{VR}(\mathcal{F}_{\preceq A}; 3)$ with $|A|\geq 4$. Now we know that lk$_K(A)$ is homotopy equivalent to a wedge sum of $S^6$'s and/or $S^3$'s.  In order to eliminate the $S^6$'s in lk$_K(A)$, we need to expand the complex lk$_K(A)$ with more vertices to kill the $6$-dimensional spheres. Note that the $6$-dimensional spheres in lk$_K(A)$ are from the collection of all proper subsets of $A$ with size $n-1$, $n-2$, $n-3$ which is isometric to $\mathcal{F}_1^n\cup \mathcal{F}_2^n\cup \mathcal{F}_3^n$. In Lemmas~\ref{lk_A_null} and~\ref{null_f1f2f3_G} we get a superset of $\mathcal{F}_1^n\cup \mathcal{F}_2^n\cup \mathcal{F}_3^n$ whose Vietoris-Rips complex with scale $3$ is contractible, i.e. the $6$-dimensional spheres are eliminated. The strategy of the proof is same as the proof of Theorem~\ref{homo_f1uf2uf3}.

\begin{lemma}\label{lk_A_null} For $n\geq 4$, let $\mathcal{G}$ be the $4$-subset of $[n]$ which contains $1$. Pick $A=i_1i_2i_3$ with $i_1\geq 2$ and let $K=\mathcal{VR}((\mathcal{F}_{\preceq A}^n\setminus \mathcal{F}_{0}^n) \cup \mathcal{G}; 3)$.  Then, lk$_K(A)$ is contractible. \end{lemma}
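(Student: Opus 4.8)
The plan is to mimic the proof of Theorem~\ref{homo_f1uf2uf3}, but now the ambient complex $K=\mathcal{VR}((\mathcal{F}_{\preceq A}^n\setminus \mathcal{F}_{0}^n) \cup \mathcal{G}; 3)$ also contains the four special $4$-subsets of $[n]$ that contain $1$, namely the sets in $\mathcal{G}=\{B\subseteq[n]:|B|=4,\ 1\in B\}$. The key observation is that the empty set (which was available in Theorem~\ref{homo_f1uf2uf3}) is now gone, but the extra vertices in $\mathcal{G}$ will provide the same ``coning'' effect inside the link of $A$: a vertex of the form $\{1\}\cup C$ with $|C|=3$, $C\subseteq[n]\setminus\{1\}$ has distance $\leq 3$ to every subset of $[n]\setminus\{1\}$ of size $1,2,3$, and also to $\{1\}$ itself, so it behaves like a near-universal vertex for the relevant subcomplex.

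First I would set $L=\text{lk}_K(A)$ and stratify $L^{(0)}$ exactly as in Lemma~\ref{lk_A_f1f2f3}: for $\ell=0,1,\ldots,n$ let $\mathcal{G}_0$ consist of the nonempty proper subsets of $A=\{i_1,i_2,i_3\}$ together with the relevant sets of $\mathcal{G}$ that lie in $\text{lk}_K(A)$ (i.e. the ones at distance $\leq 3$ from $A$), put $\mathcal{G}_\ell=\emptyset$ for $\ell\in\{i_1,i_2,i_3\}$, and let $\mathcal{G}_\ell$ collect the vertices of $L$ containing $\ell$ for the remaining $\ell$. Then I would define $L_\ell=\mathcal{VR}(\bigcup_{i=0}^\ell\mathcal{G}_i;3)$, so $L=L_n$, and run the same two-stage induction: (1) show $\text{SC}_{L_\ell}(L_{\ell-1})\simeq L_{\ell-1}$ for each $\ell$ by verifying via Corollary~\ref{sc_homo_cor} that $\text{lk}_{L_\ell}(s)\cap L_{\ell-1}$ is contractible or empty for every simplex $s\in\mathcal{VR}(\mathcal{G}_\ell;3)$ — this is the same cone-type argument as in Lemma~\ref{lk_A_f1f2f3}, since distances inside each $\mathcal{G}_\ell$-link are still $\leq 3$; and (2) analyze the homotopy type of $L_0$ itself, and then propagate through $L_1,\ldots,L_n$.

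The crucial new point is step (2): I claim $L_0$ is already contractible, which is the whole reason the extra four vertices of $\mathcal{G}$ were added. Concretely, $\mathcal{G}_0$ contains $\{1,i_1,i_2,i_3\}$ whenever this set has distance $\leq 3$ from $A$ — but $d(\{1,i_1,i_2,i_3\},A)=1$, so $\{1,i_1,i_2,i_3\}\in L$, and it has distance $\leq 3$ to each of $\{i_1\},\{i_2\},\{i_3\},\{i_1,i_2\},\{i_1,i_3\},\{i_2,i_3\}$; hence $\mathcal{VR}(\mathcal{G}_0;3)$ restricted to $\mathcal{G}_0^{(0)}$ becomes a cone with apex $\{1,i_1,i_2,i_3\}$ after we check that this vertex also dominates the remaining sets of $\mathcal{G}\cap L$. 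So $L_0$ is contractible. Once $L_0$ is contractible, the step-(1) claim $\text{SC}_{L_\ell}(L_{\ell-1})\simeq L_{\ell-1}$ together with the fact (to be checked as in Lemma~\ref{lk_A_f1f2f3}, bottom paragraph) that each $\mathcal{G}_\ell$ with $\ell>i_1$ — and in fact each $\mathcal{G}_\ell$ at all, because the apex vertex $\{1,i_1,i_2,i_3\}$ of $L_0$ dominates everything — is swallowed into a star, gives $L_\ell=\text{SC}_{L_\ell}(L_{\ell-1})\simeq L_{\ell-1}$ for every $\ell$. Therefore $L=L_n\simeq L_0\simeq *$, i.e. $\text{lk}_K(A)$ is contractible. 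Finally, by Lemma~\ref{complex_add_1v} (or directly, since the link is contractible) one gets that $A$ can be coned off, which is how this lemma feeds into the next one.

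The main obstacle I expect is bookkeeping in step (2): precisely determining which elements of $\mathcal{G}$ actually land in $\text{lk}_K(A)$ (this depends on how $A=i_1i_2i_3$ meets $\{1\}$ and on the total order $\prec$, since we only keep vertices $\preceq A$), and confirming that among those the vertex $\{1,i_1,i_2,i_3\}$ — or, in the degenerate cases where $1\in A$ cannot happen since $i_1\geq 2$, some other vertex of $\mathcal{G}$ — is at distance $\leq 3$ from \emph{all} of them, so that the cone structure is genuine. The subtlety is that some $4$-subsets in $\mathcal{G}$ may be $\succ A$ and hence excluded from $K$, so the set $\mathcal{G}_0$ is not simply ``$\mathcal{G}$ plus the proper subsets of $A$''; one must argue that whichever sets of $\mathcal{G}$ survive still admit a common dominating vertex. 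Apart from this, everything is a direct re-run of the arguments already established in Section~\ref{mx_f1f2f3_sec}, using Corollary~\ref{sc_homo_cor} and Lemma~\ref{cup_simp} as the workhorses.
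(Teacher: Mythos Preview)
Your stratification-by-index approach has a genuine gap in both places where you claim things carry over from Lemma~\ref{lk_A_f1f2f3}. First, the assertion that the apex $\{1,i_1,i_2,i_3\}$ ``dominates everything'' is simply false: for any $k\notin\{1\}\cup A$ one has $d(\{1,i_1,i_2,i_3\},\{i_a,k\})=4$, so for $1<\ell<i_1$ the $5$-simplex $\sigma_\ell=\mathcal{G}_\ell$ is \emph{not} in the star of $\{1,i_1,i_2,i_3\}$; and a direct check shows no other vertex of $L_{\ell-1}$ (including the new size-$4$ vertices $\{1,i_a,i_b,\ell\}$, whose star meets $\sigma_\ell$ only in $\sigma_\ell\setminus\{\ell i_c\}$, the same face already covered by $\{i_a,i_b\}$) dominates all of $\sigma_\ell$ either. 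So ``swallowed into a star'' fails exactly where it did before, and you cannot conclude $L_\ell=\text{SC}_{L_\ell}(L_{\ell-1})$.

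Second, and more seriously, even the preliminary claim $\text{SC}_{L_\ell}(L_{\ell-1})\simeq L_{\ell-1}$ via Corollary~\ref{sc_homo_cor} breaks: take $n=5$, $A=\{3,4,5\}$, $\ell=2$, and the edge $s=\{\{2,3\},\{2,4,5\}\}\subset\mathcal{G}_2$. One computes $\text{lk}_{L_2}(s)\cap L_1=\{\{4\},\{5\},\{3,4\},\{3,5\},\{1,2,3,4\},\{1,2,3,5\}\}$, and this complex is a $4$-cycle with one extra vertex coning off two adjacent edges --- it has $H_1\cong\mathbb{Z}$, hence is homotopy equivalent to $S^1$, not contractible. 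So the hypothesis of Corollary~\ref{sc_homo_cor} fails and the whole inductive scheme collapses.

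The paper avoids both problems by partitioning \emph{by cardinality} rather than by index: $\mathcal{H}_1$ consists of all vertices of $\text{lk}_K(A)$ of sizes $1,3,4$ together with the size-$2$ subsets of $A$, and $\mathcal{H}_2$ collects the remaining size-$2$ vertices $\{i_a,k\}$ with $k\notin A$. The point is that $\{1,i_1,i_2,i_3\}$ \emph{does} dominate every vertex of $\mathcal{H}_1$ (the obstruction above came precisely from the size-$2$ sets now quarantined in $\mathcal{H}_2$), so $\mathcal{VR}(\mathcal{H}_1;3)$ is an honest cone; then one shows $\text{SC}_L(\mathcal{VR}(\mathcal{H}_1;3))=L$ and applies the star-cluster machinery only to simplices with all vertices in $\mathcal{H}_2$, where the link condition can be verified.
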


\begin{proof} Let $L=\text{lk}_K(A)$ and  divide the vertices in  into collections $\mathcal{H}_1$ and $\mathcal{H}_2$ such that:
\begin{itemize}

\item[1)] $\mathcal{H}_1$ contains the vertices in $L$ with sizes $1, 3, 4$ and all the subsets of $A$ with size $2$;

\item[2)] $\mathcal{H}_2$ contains all the vertices $C$ of $L$ with size $2$ and $|C\cap A|=1$.
\end{itemize}
Let $B=1i_1i_2i_3$ and clearly $B\in \mathcal{H}_1$. It is straightforward to verify that the distance between $B$ and any other vertex in $\mathcal{H}_1$ is $\leq 3$. Hence $L_1=\mathcal{VR}(\mathcal{H}_1; 3)$ is a cone and contractible.

Pick a simplex $s\in\mathcal{VR}(\mathcal{H}_2; 3)$. Without loss of generality assume that $s=\{C\}$ where $C=\{i_1, k\}$ with $k\neq i_1, i_2$, or $i_3$. Then the vertices of  the link of $s$ in $L_1$, $\text{lk}_L(s)\cap L_1$,  is consisting of the following:

\begin{itemize}

\item[1)] all the subsets of $A$ with size $1$;
\item[2)]  the subsets of $A$ with size $2$ and containing $i_1$;
\item[3)] the vertices with size $3$ in $L_1$ containing $i_1$.
\end{itemize}
Hence $\text{lk}_L(s)\cap L_1$ is a cone, i.e. contractible, because the distance between the vertex $i_1$ with any other vertex in $\text{lk}_L(s)\cap L_1$ is $\leq 3$. Similarly, we can prove that $\text{lk}_L(s)\cap L_1$ is contractible for other simplex $s$ in $\mathcal{VR}(\mathcal{H}_2; 3)$. By Lemma~\ref{sc_homo_t}, SC$_L(L_1)$ is homotopy equivalent to $L_1$.

Since $\mathcal{H}_2$ is consisting of vertices with size $2$, $\mathcal{VR}(\mathcal{H}_2; 3)\subset \text{st}_L(D)$ for vertex $D$ in $\mathcal{H}_1$ with size $1$; Hence, SC$_L(L_1)=L$. Therefore, $L\simeq L_1$ is contractible.
\end{proof}

\begin{lemma}\label{null_f1f2f3_G} For $n\geq 4$, let $\mathcal{G}$ be the $4$-subset of $[n]$ which contains $1$. The complex $\mathcal{VR}(\mathcal{F}_1^n\cup \mathcal{F}_2^n\cup \mathcal{F}_3^n\cup \mathcal{G}; 3)$ is contractible. \end{lemma}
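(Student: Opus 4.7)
The plan is to mirror the inductive strategy used in Theorem~\ref{homo_f1uf2uf3}, but substituting Lemma~\ref{lk_A_null} for Lemma~\ref{lk_A_f1f2f3}: I build the target complex by introducing the $3$-subsets of $[n]$ that do not contain $1$ one at a time along the order $\prec$, choosing this order so that every new vertex has contractible link in the complex at that stage. Each such addition then preserves homotopy type via Lemma~\ref{complex_add_1v}, and because the base complex is a cone, the whole complex is contractible.

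First I would set $K_0 = \mathcal{VR}(\mathcal{F}_1^n \cup \mathcal{F}_2^n \cup \{B \in \mathcal{F}_3^n : 1 \in B\} \cup \mathcal{G}; 3)$ as the starting complex. Every vertex of $K_0$ is within distance $3$ of the singleton $\{1\}$: sets in $\mathcal{F}_1^n$ and $\mathcal{F}_2^n$ differ from $\{1\}$ by at most $3$ elements; $3$-subsets containing $1$ differ by exactly $2$; and the $4$-subsets in $\mathcal{G}$ each contain $1$, so they differ from $\{1\}$ by exactly $3$. Since $\mathcal{VR}(\cdot; 3)$ is a clique complex, this makes $K_0$ a cone with apex $\{1\}$, hence contractible.

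Next, I would enumerate the $3$-subsets of $[n]$ not containing $1$ in the order $\prec$ as $A^{(1)} \prec A^{(2)} \prec \cdots \prec A^{(N)}$, and define $K_j = \mathcal{VR}\bigl((\mathcal{F}_{\preceq A^{(j)}}^n \setminus \mathcal{F}_0^n) \cup \mathcal{G}; 3\bigr)$ for $j = 1, \ldots, N$. By the definition of $\prec$, every $3$-subset that is $\prec$-smaller than $A^{(j)}$ lies in $K_{j-1}$, so $K_j$ is obtained from $K_{j-1}$ by adjoining the single vertex $A^{(j)}$. Applying Lemma~\ref{lk_A_null} to $A = A^{(j)}$ shows that $\text{lk}_{K_j}(A^{(j)})$ is contractible, so its inclusion into $K_j$ is null-homotopic. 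By Lemma~\ref{complex_add_1v}, $K_j \simeq K_{j-1} \vee \Sigma(\text{lk}_{K_j}(A^{(j)}))$, and since the suspension of a contractible space is contractible, $K_j \simeq K_{j-1}$.

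By induction, $K_N \simeq K_0$ is contractible, and $K_N$ is exactly $\mathcal{VR}(\mathcal{F}_1^n \cup \mathcal{F}_2^n \cup \mathcal{F}_3^n \cup \mathcal{G}; 3)$, which would complete the proof. The substantive content is entirely packaged inside Lemma~\ref{lk_A_null}; the present lemma is a routine assembly of that fact along the $\prec$-filtration, so there is no real technical obstacle beyond verifying the elementary distance bounds at the base case and confirming that at stage $j$ the complex $K_j$ is of the exact form required to invoke Lemma~\ref{lk_A_null}.
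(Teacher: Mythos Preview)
Your proof is correct and follows essentially the same approach as the paper's: both start from the cone $\mathcal{VR}((\mathcal{F}_{\prec \{2,3,4\}}^n\setminus\mathcal{F}_0^n)\cup\mathcal{G};3)$ with apex $\{1\}$, then add the $3$-subsets $A$ with $\min A\geq 2$ one at a time along $\prec$, invoking Lemma~\ref{lk_A_null} at each step to get a contractible link and Lemma~\ref{complex_add_1v} to preserve homotopy type. The only cosmetic difference is that the paper treats the first step $A=\{2,3,4\}$ explicitly and then says ``by an induction,'' whereas you spell out the filtration $K_0,K_1,\ldots,K_N$ in full.
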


\begin{proof} First let $A=\{2, 3, 4\}$ and $\mathcal{H}_0=(\mathcal{F}_{\prec A}^n\setminus \mathcal{F}_{0}^n) \cup \mathcal{G}$. Then the distance between $\{1\}$ and any other vertex in $\mathcal{H}_0$ is $\leq 3$. Hence $L_0 = \mathcal{VR}(\mathcal{H}_0; 3)$ is a cone, hence contractible. Then if $L_A=\mathcal{VR}((\mathcal{F}_{\preceq A}^n\setminus \mathcal{F}_{0}^n) \cup \mathcal{G}; 3)$, $L_A$ is contractible by Lemma~\ref{cup_simp} because both $L_0=L_A\setminus \{A\}$ and lk$_{L_A}(A)$ (by Lemma~\ref{lk_A_null}) are contractible.  Then by an induction, $\mathcal{VR}(\mathcal{F}_1^n\cup \mathcal{F}_2^n\cup \mathcal{F}_3^n\cup \mathcal{G}; 3)$ is contractible. \end{proof}

Now we're ready to prove that there is a subcomplex $L$ of $K=\mathcal{VR}(\mathcal{F}_{\prec A}; 3)$ with $|A|\geq 3$ such that $L$ is homotopy equivalent to a wedge sum of $S^3$'s and lk$_K(A)\subset L$.

\begin{lemma}\label{homo_null} Let $A=i_1i_2\cdots i_n$ be an element in $[m]$ with $n\geq 4$. Let  $K=\mathcal{VR}(\mathcal{F}^m_{\preceq A}; 3)$.

 Then there exists a subcomplex $L$ of $K\setminus \{A\}$ such that:
 \begin{itemize}
 \item[i)] the link of $A$ in the complex $K$, namely lk$_K(A)$, is contained in $L$
 \item[ii)] $L$ is homotopy equivalent to $\bigvee_{r_A}S^3$.

 \end{itemize}\end{lemma}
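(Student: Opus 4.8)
The plan is to take $L$ to be the Vietoris–Rips complex of the vertex set $\text{lk}_K(A)^{(0)}$ together with the extra $4$-subsets of $A$ of the form $\{i_1\}\cup C$ with $C$ a $3$-subset of $\{i_2,i_3,\dots,i_n\}$; more precisely, recalling from the proof of Lemma~\ref{lk_A_preceq} that $\text{lk}_K(A)^{(0)}=\bigcup_{k=0}^m \mathcal{G}_k$ where $\mathcal{G}_0$ consists of all subsets of $A$ of sizes $n-1$, $n-2$, $n-3$ and each $\mathcal{G}_k$ ($k\notin A$) consists of sets $\{k\}\cup C$ with $C$ an $(n-1)$- or $(n-2)$-subset of $A$, I enlarge only the "$\mathcal{G}_0$ part". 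Under the natural bijection $C\mapsto A\setminus C$, the set $\mathcal{G}_0$ is isometric to $\mathcal{F}_1^n\cup\mathcal{F}_2^n\cup\mathcal{F}_3^n$, and the new vertices correspond exactly to the distinguished $4$-subset $\mathcal{G}$ of $[n]$ containing $1$ in Lemma~\ref{null_f1f2f3_G}. So I set $\mathcal{G}_0'=\mathcal{G}_0\cup\{\text{the }\binom{n-1}{3}\text{ subsets }A\setminus(\{i_1\}\cup C)^c\dots\}$—concretely $\mathcal{G}_0'$ is the image of $\mathcal{F}_1^n\cup\mathcal{F}_2^n\cup\mathcal{F}_3^n\cup\mathcal{G}$ under that isometry—and let $L=\mathcal{VR}\bigl(\mathcal{G}_0'\cup\bigcup_{k\notin A}\mathcal{G}_k;3\bigr)$, a full subcomplex of $K\setminus\{A\}$ (one checks the new vertices all have distance $>3$ from $A$, using Lemma~\ref{dist_3}, since a $4$-subset of $A$ has distance $n-4\ge \dots$; for $n\ge 8$ this needs care, so in fact the new vertices are the $(n-4)$-subsets of $A$ and one must verify $d(A,\cdot)=4>3$, which holds precisely because $n-(n-4)=4$). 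Item i) is then immediate: $\text{lk}_K(A)$ is a full subcomplex of $L$ on a subset of its vertices and $L\subseteq K\setminus\{A\}$.

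For item ii) I would run exactly the induction of the proof of Lemma~\ref{lk_A_preceq}, but with $L_0$ replaced by $L_0'=\mathcal{VR}(\mathcal{G}_0';3)$. By the isometry and Lemma~\ref{null_f1f2f3_G}, $L_0'$ is \emph{contractible} rather than a wedge of $\binom n4$ copies of $S^6$. Now define $L_k'=\mathcal{VR}(\bigcup_{\ell=0}^k\mathcal{G}_\ell'; 3)$ where $\mathcal{G}_\ell'=\mathcal{G}_\ell$ for $\ell\ge 1$. The key point is that the local analysis in Lemma~\ref{lk_A_preceq}—that for every simplex $s\in\mathcal{VR}(\mathcal{G}_k;3)$ the set $\text{lk}_{L_k'}(s)\cap L_{k-1}'$ is a cone (hence contractible) or empty, and the computation of the relevant $\text{st}_{L_\ell}(\cdot)\cap\mathcal{G}_\ell$ intersections—depends only on the vertices of $\mathcal{G}_k$ and their distances to subsets of $A$ of sizes $n-1,n-2,n-3$; adding the $(n-4)$-subsets of $A$ to the ambient complex only enlarges those cone-apex neighborhoods, so every "is a cone" conclusion still holds verbatim (the apex vertex—typically an element of $A$ of size $1$, or a fixed $(n-2)$-subset—still has distance $\le 3$ to all relevant vertices, and the newly-added vertices are either absent from the link or still within distance $3$ of that apex). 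Consequently $\text{SC}_{L_k'}(L_{k-1}')\simeq L_{k-1}'$ for every $k$, exactly as before.

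With the $S^6$-producing base step now contractible, I re-examine the three regimes of $\ell$. For $\ell\le i_3$, for $\ell\ge i_n$, and for $\ell\in A$, the proof of Lemma~\ref{lk_A_preceq} gives $L_\ell'\simeq L_{\ell-1}'$ verbatim, so $L_\ell'$ stays contractible through $\ell=i_3$. For $\ell$ with $i_{k-1}<\ell<i_k$ (so $r_k\ne 0$), the same application of Lemma~\ref{sc_cup_maxfacets}—with $L=L_{\ell-1}'$, the $\sigma$-type maximal facets contributing nothing (their intersection with $\text{SC}$ is contractible by parts i) and ii) of that proof) and the $\tau$-type facets, $\binom{k-1}{3}$ of them, each contributing an $S^2$ that is null-homotopic by Lemma~\ref{quo_parial_boundary}—yields $L_\ell'\simeq L_{\ell-1}'\vee(\bigvee_{\binom{k-1}{3}}S^3)$. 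Since dimension $3$ suspensions of $S^2$ interact trivially with a wedge of $S^3$'s (null-homotopy of the inclusions is automatic), an induction over $\ell=1,\dots,m$ gives $L=L_m'\simeq\bigvee_{\sum_{k=4}^n r_k\binom{k-1}{3}}S^3=\bigvee_{r_A}S^3$.

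The main obstacle is verifying that enlarging $\mathcal{G}_0$ to $\mathcal{G}_0'$ does not break any of the cone/contractibility arguments in the three regimes—i.e., that every "distance $\le 3$ to a fixed apex" claim in Lemma~\ref{lk_A_preceq} survives the addition of the $(n-4)$-subsets of $A$ (and their associated higher simplices) to the ambient complex, and dually that these new vertices never create a maximal facet of $\mathcal{VR}(\mathcal{G}_\ell;3)$-type whose intersection with $\text{SC}$ fails to be contractible. This is a finite, case-based check that parallels Lemma~\ref{mx_facets} and the analysis there, but it must be done carefully, especially distinguishing whether a new vertex lies in the link of the simplex $s$ at all. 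A secondary point is confirming $L\subseteq K\setminus\{A\}$, i.e. $d(A,B)>3$ for every added $B$; this is exactly the statement $|A|-|A\setminus B|=4>3$ together with Lemma~\ref{dist_3}, and requires $n\ge 4$, which is our hypothesis.
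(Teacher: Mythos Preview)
Your approach is essentially identical to the paper's: enlarge $\mathcal{G}_0$ by the vertices corresponding under the complementation isometry to the family $\mathcal{G}$ of Lemma~\ref{null_f1f2f3_G}, so that the new base $L_0'$ is contractible, and then rerun the induction of Lemma~\ref{lk_A_preceq} verbatim. The paper states this in one sentence (``the rest of the proof is same as the proof of Lemma~\ref{lk_A_preceq}''), and your outline of why the three regimes survive unchanged is exactly the verification the paper leaves implicit.

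One point of exposition to clean up: your direct description of the added vertices is backwards. You write ``the extra $4$-subsets of $A$ of the form $\{i_1\}\cup C$'', but under the isometry $B\mapsto A\setminus B$ the family $\mathcal{G}$ (the $4$-subsets of $[n]$ containing $1$) corresponds to the $(n-4)$-subsets of $A$ \emph{not} containing $i_1$. This is what the paper adds, and it is what your ``image of $\mathcal{F}_1^n\cup\mathcal{F}_2^n\cup\mathcal{F}_3^n\cup\mathcal{G}$ under that isometry'' actually produces; your parenthetical self-correction to ``$(n-4)$-subsets of $A$'' is right but still omits the ``not containing $i_1$'' restriction. With that fixed, the distance check $d(A,B)=4>3$ and the membership $B\in\mathcal{F}^m_{\prec A}$ are immediate, and the case-by-case verification that the new vertices contribute nothing new to any $\text{SC}_{L_\ell'}(L_{\ell-1}')\cap\tau$ (because each $\text{st}(D')\cap\tau$ is contained in one of the four faces already listed) goes through as you indicate.
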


 \begin{proof} Pick $L$ be the full subcomplex of $K$ with the vertices from lk$_K(A)$ and the vertices which are $(n-4)$-subsets of $A$ which doesn't containing $i_1$.

 The proof of $L\simeq \bigvee_{r_A}S^3$ is almost identical with the proof of Lemma~\ref{lk_A_preceq}. The only difference is that we start from $\mathcal{G}_0$ be the collection of subsets of $A$ of sizes $n-1$, $n-2$, $n-3$, and also the ones with size $n-4$ which doesn't containing $i_1$. Again by a natural transformation, $\mathcal{VR}(\mathcal{G}_0; 3)$ is isomorphic to the complex in Lemma~\ref{null_f1f2f3_G} which is contractible. The rest of the proof is same as the proof of Lemma~\ref{lk_A_preceq}.  \end{proof}

\section{Homotompy types of $\mathcal{VR}(Q_m; 3)$ for $m\geq 5$}\label{main_sec}

In this section, we'll prove the main result in the paper and describe the homotopy types of $\mathcal{VR}(Q_m; 3)$ for $m\geq 5$. We divide the proofs into two steps: firstly, we inductively determine the homotopy types of $\mathcal{VR}(\mathcal{F}^m_{\preceq A}; 3)$ for any vertex $A$ with size $\geq 4$ using Lemma~\ref{complex_add_1v}, Lemma~\ref{lk_A_preceq}, and Lemma~\ref{homo_null}; then, we prove the inductive formulas for the homotopy types of $\mathcal{VR}(Q_m; 3)$ which yields the one in the main theorem.

\begin{theorem}\label{homo_preceqA} Let $A=i_1i_2\cdots i_n$ be an element in $[m]$ with $n\geq 4$. Let  $K=\mathcal{VR}(\mathcal{F}^m_{\preceq A}; 3)$. The the complex $K$ is homotopy equivalent to

$$\mathcal{VR}(\mathcal{F}^m_{\prec A}; 3)\vee (\bigvee_{n\choose 4} S^7) \vee (\bigvee_{r_A} S^4).$$

\end{theorem}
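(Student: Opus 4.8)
The plan is to apply Lemma~\ref{complex_add_1v} to the vertex $A$ in the complex $K=\mathcal{VR}(\mathcal{F}^m_{\preceq A};3)$. Since $K\setminus\{A\}=\mathcal{VR}(\mathcal{F}^m_{\prec A};3)$ and $K=(K\setminus\{A\})\cup \mathrm{st}_K(A)$ with $(K\setminus\{A\})\cap \mathrm{st}_K(A)=\mathrm{lk}_K(A)$, the conclusion $K\simeq \mathcal{VR}(\mathcal{F}^m_{\prec A};3)\vee \Sigma(\mathrm{lk}_K(A))$ will follow from Lemma~\ref{complex_add_1v} \emph{provided} the inclusion $\imath:\mathrm{lk}_K(A)\hookrightarrow K$ is null-homotopic. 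By Lemma~\ref{lk_A_preceq} we already know $\mathrm{lk}_K(A)\simeq (\bigvee_{\binom n4}S^6)\vee(\bigvee_{r_A}S^3)$, so $\Sigma(\mathrm{lk}_K(A))\simeq (\bigvee_{\binom n4}S^7)\vee(\bigvee_{r_A}S^4)$, which is exactly the wedge summand in the statement. So the entire content of the theorem is the null-homotopy claim.

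To prove null-homotopy I would split $\mathrm{lk}_K(A)$ using Lemma~\ref{null_homo}: it suffices to exhibit a subcomplex $L$ of $K$ with $\mathrm{lk}_K(A)\subseteq L$ and $\imath_L:L\hookrightarrow K$ null-homotopic. Lemma~\ref{homo_null} hands us precisely such an $L$ living inside $K\setminus\{A\}=\mathcal{VR}(\mathcal{F}^m_{\prec A};3)$, with $\mathrm{lk}_K(A)\subseteq L$ and $L\simeq \bigvee_{r_A}S^3$. Thus $\mathrm{lk}_K(A)$ factors through a wedge of $3$-spheres sitting inside $K$. This does not yet finish the job: I still need that the map $L\to K$ is null-homotopic, not merely that $L$ is a wedge of spheres. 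The mechanism is dimension-counting on homotopy classes: $L$ is (up to homotopy) a $3$-complex, and I would argue that $K$ itself, built from $\mathcal{F}^m_{\prec A}$ by the inductive description, has the homotopy type of a wedge of spheres of dimensions $\ge 4$ (using the running induction on $\prec$ together with Theorem~\ref{homo_preceqA} for predecessors, whose base case $\mathcal{F}^m_{\preceq A}$ for the $\prec$-minimal $A$ with $|A|=4$ reduces to a contractible cone on $\mathcal{F}^m_{\le 3}$); then any map from a wedge of $S^3$'s into a wedge of $S^{\ge 4}$'s is null-homotopic, as recorded in the preliminaries. Hence $\imath_L$ is null-homotopic, and by Lemma~\ref{null_homo} so is $\imath$.

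Carrying this out in order: (1) observe $K=(K\setminus\{A\})\cup\mathrm{st}_K(A)$ and identify the intersection as $\mathrm{lk}_K(A)$; (2) invoke Lemma~\ref{homo_null} to get $L$ with $\mathrm{lk}_K(A)\subseteq L\subseteq K\setminus\{A\}$ and $L\simeq\bigvee_{r_A}S^3$; (3) by the outer induction on the total order $\prec$ (the induction being set up in the next section), $\mathcal{VR}(\mathcal{F}^m_{\prec A};3)$ is homotopy equivalent to a wedge of spheres of dimensions in $\{4,7\}$ (the dimensions $\ge 4$ being all that matters here); (4) conclude the inclusion $L\hookrightarrow \mathcal{VR}(\mathcal{F}^m_{\prec A};3)\hookrightarrow K$ is null-homotopic since it is a map from a wedge of $3$-spheres into a wedge of higher-dimensional spheres; (5) apply Lemma~\ref{null_homo} to get $\imath:\mathrm{lk}_K(A)\to K$ null-homotopic; (6) apply Lemma~\ref{complex_add_1v} to get $K\simeq \mathcal{VR}(\mathcal{F}^m_{\prec A};3)\vee\Sigma(\mathrm{lk}_K(A))$; (7) substitute the homotopy type of $\mathrm{lk}_K(A)$ from Lemma~\ref{lk_A_preceq} and use $\Sigma S^k\simeq S^{k+1}$ to obtain the stated formula.

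The main obstacle is step (3)/(4): making the null-homotopy argument airtight requires knowing, \emph{before} finishing the induction, that $\mathcal{VR}(\mathcal{F}^m_{\prec A};3)$ has no cells below dimension $4$ up to homotopy. This is handled by interleaving the present theorem with the induction of Section~\ref{main_sec}: for the $\prec$-predecessors of $A$ the homotopy type is already a wedge of $S^4$'s and $S^7$'s by the inductive hypothesis, and these wedge decompositions are preserved under the union operations (each new vertex contributes, via Lemma~\ref{cup_simp}, a suspension of something at least $3$-dimensional, i.e.\ a sphere of dimension $\ge 4$), so the connectivity bound propagates. An alternative, cleaner route avoiding any connectivity bookkeeping is to note that Lemma~\ref{homo_null} gives $L\subseteq K\setminus\{A\}$ with $L\simeq\bigvee_{r_A}S^3$, and then apply Lemma~\ref{cup_simp} directly to $K=(K\setminus\{A\})\cup\mathrm{st}_K(A)$: one checks both inclusions of $\mathrm{lk}_K(A)$ are null-homotopic — into $\mathrm{st}_K(A)$ trivially since $\mathrm{st}_K(A)$ is contractible, and into $K\setminus\{A\}$ via the factorization through $L$ and the dimension count — yielding $K\simeq (K\setminus\{A\})\vee \mathrm{st}_K(A)\vee\Sigma(\mathrm{lk}_K(A))\simeq \mathcal{VR}(\mathcal{F}^m_{\prec A};3)\vee\Sigma(\mathrm{lk}_K(A))$, which is the same conclusion.
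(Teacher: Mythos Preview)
Your proposal is correct and follows essentially the same route as the paper: induct on $\prec$, use Lemma~\ref{homo_null} to sandwich $\mathrm{lk}_K(A)$ inside an $L\simeq\bigvee_{r_A}S^3$, use the inductive hypothesis that $\mathcal{VR}(\mathcal{F}^m_{\prec A};3)$ is a wedge of $S^4$'s and $S^7$'s to conclude $L\hookrightarrow K\setminus\{A\}$ is null-homotopic by the dimension argument, and then apply Lemma~\ref{null_homo} and Lemma~\ref{complex_add_1v}. One small organizational point: the induction is not ``set up in the next section'' but is carried out within this theorem's own proof (the paper states the base case $A=\{1,2,3,4\}$ explicitly and then the inductive step exactly as you outline); Theorem~\ref{main} merely sums the contributions over all $A$.
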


\begin{proof} We'll prove the result by induction on the order `$\prec$' over $Q_m$. First let $A=\{1, 2, 3, 4\}$.  The complex $\mathcal{VR}(\mathcal{F}^m_{\prec A}; 3)$ is a cone because the distance between the empty set and any other vertex is $\leq 3$. Also the link of $A$ in $\mathcal{VR}(\mathcal{F}^m_{\preceq A}; 3)$ is homotopy equivalent to $S^6$ by Lemma~\ref{lk_A_preceq} since $r_A=0$. Then by Lemma~\ref{complex_add_1v}, $\mathcal{VR}(\mathcal{F}^m_{\preceq A}; 3)\simeq S^7$.

Now we take $A=i_1i_2\cdots i_n$ with $n\geq 4$ and assume that $\mathcal{VR}(\mathcal{F}^m_{\prec A}; 3)$ is a wedge sum of $S^7$'s and $S^4$'s, or just $S^7$'s. By Lemma~\ref{homo_null}, there is a subcomplex $L$ of $\mathcal{VR}(\mathcal{F}^m_{\prec A}; 3)$ such that lk$_K(A)\subset L$ and $L\simeq \bigvee_{r_A}S^3$. Note that $L$ is null-homotopic in $\mathcal{VR}(\mathcal{F}^m_{\prec A}; 3)$. So by Lemma~\ref{null_homo}, lk$_K(A)$ is null-homotopic in $\mathcal{VR}(\mathcal{F}^m_{\prec A}; 3)$. Hence by Lemma~\ref{complex_add_1v} and Lemma~\ref{lk_A_preceq},  $$\mathcal{VR}(\mathcal{F}^m_{\preceq A}; 3)\simeq \mathcal{VR}(\mathcal{F}^m_{\prec A}; 3)\vee \Sigma(\text{lk}_K(A))\simeq \mathcal{VR}(\mathcal{F}^m_{\prec A}; 3)\vee (\bigvee_{n\choose 4} S^7) \vee (\bigvee_{r_A} S^4). $$ \end{proof}

\begin{theorem}\label{main} For $m\geq 5$, The complex $\mathcal{VR}(Q_m; 3)$ is homotopy equivalent to $$(\bigvee_{2^{m-4}\cdot{m\choose 4}} S^7) \vee (\bigvee_{\sum_{i=4}^{m-1}2^{i-4}\cdot{i\choose 4}} S^4).$$

\end{theorem}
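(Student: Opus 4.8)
The plan is to run the induction on the total order $\prec$ over $Q_m$ already set up by Theorem~\ref{homo_preceqA}, and then to sum the contributions of all vertices $A$ with $|A|\geq 4$. Since $\mathcal{VR}(\mathcal{F}^m_{\leq 3};3)$ is contractible (a cone with apex $\emptyset$), and since passing from $\mathcal{F}^m_{\prec A}$ to $\mathcal{F}^m_{\preceq A}$ by adding the single vertex $A$ changes the homotopy type by wedging on $\bigvee_{\binom{n}{4}}S^7 \vee \bigvee_{r_A}S^4$ whenever $|A|=n\geq 4$ (Theorem~\ref{homo_preceqA}), iterating over all of $Q_m$ in $\prec$-order gives
\[
\mathcal{VR}(Q_m;3)\simeq \Bigl(\bigvee_{\sum_{A:\,|A|\geq 4}\binom{|A|}{4}}S^7\Bigr)\vee\Bigl(\bigvee_{\sum_{A:\,|A|\geq 4}r_A}S^4\Bigr).
\]
So the whole proof reduces to two counting identities: $\sum_{A\subseteq[m],\,|A|\geq 4}\binom{|A|}{4}=2^{m-4}\binom{m}{4}$ and $\sum_{A\subseteq[m],\,|A|\geq 4}r_A=\sum_{i=4}^{m-1}2^{i-4}\binom{i}{4}$.

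For the $S^7$-count: there are $\binom{m}{n}$ subsets of size $n$, each contributing $\binom{n}{4}$, so the total is $\sum_{n=4}^{m}\binom{m}{n}\binom{n}{4}$. Using the subset-of-a-subset identity $\binom{m}{n}\binom{n}{4}=\binom{m}{4}\binom{m-4}{n-4}$ and summing over $n$ gives $\binom{m}{4}\sum_{j=0}^{m-4}\binom{m-4}{j}=2^{m-4}\binom{m}{4}$, as claimed. For the $S^4$-count I would reinterpret $r_A$ combinatorially. Recall $r_A=\sum_{k=4}^{n}r_k\binom{k-1}{3}$ where $r_k=i_k-i_{k-1}-1$ is the number of ``gaps'' strictly between the $(k-1)$-st and $k$-th elements of $A$. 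Equivalently, summing over all pairs $(A,\ell)$ with $A=i_1\cdots i_n$, $|A|\geq 4$, and $\ell$ a ``gap'' sitting between $i_{k-1}$ and $i_k$ for some $k\geq 4$, each such pair contributes $\binom{k-1}{3}$. The key observation is that such a configuration is determined by: the value $i_k$ (call it $i$, so $i\geq 5$), the choice of which $3$ of the elements $i_1<\cdots<i_{k-1}$ lie below $i_{k-1}$ together with $i_{k-1}$ itself — i.e.\ a $(k-1)$-element subset of $[i_k-1]$ counted with the weight $\binom{k-1}{3}$ — the gap position $\ell$ strictly between $i_{k-1}$ and $i_k$, and an arbitrary subset of $\{i+1,\dots,m\}$ for the tail $i_{k+1},\dots,i_n$. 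I expect this bookkeeping to be the main obstacle: one must carefully set up the bijection (or generating-function argument) so that $\sum_{A,\ell}\binom{k-1}{3}$ collapses to $\sum_{i=4}^{m-1}2^{i-4}\binom{i}{4}$, with the factor $2^{i-4}$ accounting for the free tail above position $i$ (shifted by the gap) and $\binom{i}{4}$ absorbing the weighted count of the ``lower part.''

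Concretely, I would first fix $i=i_k$ and observe the tail $\{i_{k+1},\dots,i_n\}$ ranges over all subsets of $\{i+1,\dots,m\}$; I would then fix the gap $\ell\in(i_{k-1},i_k)$ and the prefix $i_1<\cdots<i_{k-1}<\ell$, and show that $\sum_{k\geq 4}\;\sum_{\{i_1<\cdots<i_{k-1}\}\subseteq[\ell-1]}\binom{k-1}{3}$ equals the number of $4$-subsets of $[\ell]$ containing $\ell$ as a distinguished gap-neighbour — or more simply, apply the identity $\sum_{t\geq 4}\binom{p}{t-1}\binom{t-1}{3}=\sum_{t\geq4}\binom{p}{3}\binom{p-3}{t-4}=\binom{p}{3}2^{p-3}$ with $p=\ell-1$, and then sum over $\ell$ and the $2^{m-\ell}$ tail choices. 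After a change of summation index this should telescope into $\sum_{i=4}^{m-1}2^{i-4}\binom{i}{4}$; I would verify the base case $m=5$ by hand against Adamaszek--Adams's homology computation as a sanity check. The homotopy-theoretic content is entirely contained in Theorem~\ref{homo_preceqA}, so once the two binomial identities are established the theorem follows immediately by the induction described above.
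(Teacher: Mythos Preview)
Your reduction to the two binomial identities via Theorem~\ref{homo_preceqA}, and your treatment of the $S^7$-count, are identical to the paper's. The difference is in the $S^4$-count: the paper proves $\sum_{A}r_A=\sum_{i=4}^{m-1}2^{i-4}\binom{i}{4}$ by induction on $m$, computing the increment $\sum_{A\subseteq[m]}r_A-\sum_{A\subseteq[m-1]}r_A$ as an explicit double sum and invoking two separate combinatorial propositions (relegated to an appendix) to show this increment equals $2^{m-5}\binom{m-1}{4}$. Your route---summing directly over pairs $(A,\ell)$ with $\ell$ a gap, splitting $A$ into a prefix in $[\ell-1]$ and a tail above $\ell$, and collapsing the prefix sum via $\sum_{s\ge 3}\binom{\ell-1}{s}\binom{s}{3}=\binom{\ell-1}{3}2^{\ell-4}$---is more direct and avoids the appendix entirely.

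One bookkeeping point to fix: the tail must be \emph{non-empty} (there has to be an $i_k>\ell$), so the tail factor is $2^{m-\ell}-1$, not $2^{m-\ell}$. With that correction you obtain
\[
\sum_{A}r_A=\sum_{\ell=4}^{m-1}\binom{\ell-1}{3}2^{\ell-4}\bigl(2^{m-\ell}-1\bigr)
=2^{m-4}\binom{m-1}{4}-\sum_{\ell=4}^{m-1}2^{\ell-4}\binom{\ell-1}{3},
\]
using the hockey-stick identity for the first term. The remaining identity $2^{m-4}\binom{m-1}{4}-\sum_{\ell=4}^{m-1}2^{\ell-4}\binom{\ell-1}{3}=\sum_{i=4}^{m-1}2^{i-4}\binom{i}{4}$ is a one-line induction on $m$ (both sides increase by $2^{m-5}\binom{m-1}{4}$, using $\binom{m-1}{4}=\binom{m-2}{4}+\binom{m-2}{3}$), so your ``telescope'' does go through once the $-1$ is in place.
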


\begin{proof} By Theorem~\ref{homo_preceqA} and an inductive argument on the total order `$\prec$' over $\mathcal{P}([m])$, the following holds:
\[\mathcal{VR}(Q_m; 3)\simeq (\bigvee_{\sum_{A\subset [m] \text{ and } |A|\geq 4 }{|A|\choose 4}}S^7) \vee (\bigvee_{\sum_{A\subset [m] \text{ and } |A|\geq 4 }r_A} S^4).\]

Note the following equation:  $$\sum_{A\subset [m] \text{ and } |A|\geq 4 }{|A|\choose 4} =\sum_{n=4}^m {m\choose n}\cdot {n\choose 4}=\sum_{n=4}^m {m\choose 4}\cdot {m-4\choose n-4}=2^{m-4}\cdot {m\choose 4}. $$

Hence in the homotopy type of $\mathcal{VR}(Q_m; 3)$ with $m\geq 5$, there are $2^{m-4}\cdot {m\choose 4}$-many folds of $S^7$.

Next we'll count the number of copies of $S^4$ in the homotopy type of $\mathcal{VR}(Q_m; 3)$. When $m=5$, the only $A\subset [m]$ with $r_A\neq 0$ is when $A=\{1, 2, 3, 5\}$ with $r_A=1$. Hence when $m=5$, $$\sum_{A\subset [m] \text{ and } |A|\geq 4 }r_A=\sum_{i=4}^{m-1}2^{i-4}{i\choose 4}=1.$$

Inductively, we assume that the equation holds for $m-1$ with $m>4$. By the definition of $r_A$, the number of folds of $S^4$ is determined by  the gaps of the consecutive pairs which appear in the third position and after. So we'll count many name new such pairs appearing in the vertices of $Q_m$ not in $Q_{m-1}$. All the gaps in $A\in Q_{m-1}$ accounting for the $S^4$ reappearing in   $Q_m$  the vertex $A\cup \{m\}$. Other new pairs accounting for new folds of $S^4$ come in the forms of the pair $\{k, m\}$ with $k\geq 3$.

For each pair $\{k, m\}$ with $3\leq k\leq m-2$, we count the number of subsets of $[m]$ with size $\geq 4$ which contains $\{k, m\}$ but not any number between $k$ and $m$.  There are $\sum_{i=3}^{k} (m-k-1)\cdot {k-1\choose {i-1}} {i\choose 3}$ copies of $S^4$ in the complex $\mathcal{VR}(Q_m; 3)$ contributed by this pair. So overall, we have the following equation:
%\begin{equation*}
%\begin{split}
%\hspace{-20pt} \sum_{A\subset [m] \text{ and } |A|\geq 4 }r_A-\sum_{A\subset [m-1] \text{ and } |A|\geq 4 }r_A\\ =\sum_{i=4}^{m-2}2^{i-4}{i\choose 4}+\sum_{k=3}^{m-2}(\sum_{i=3}^{k} (m-k-1)\cdot {k-1\choose {i-1}} {i\choose 3})
%\end{split}
%\end{equation*}

\begin{align*}
\lefteqn{\sum_{A\subset [m] \text{ and } |A|\geq 4 }r_A-\sum_{A\subset [m-1] \text{ and } |A|\geq 4 }r_A} % no space taken
\qquad&
  \\[1ex]
  &=\sum_{i=4}^{m-2}2^{i-4}{i\choose 4}+\sum_{k=3}^{m-2}(\sum_{i=3}^{k} (m-k-1)\cdot {k-1\choose {i-1}} {i\choose 3})
  %&= \begin{aligned}[t]   \sigma^{2}(x) &+ \EE_{x \sim  P(X)} \bigl[ (f(x)-\EE_{x}[\hat{f}(x)])^{2}
       %&+ 2\bigl((f(x)-\EE_{x}[\hat{f}(x)])(\EE_{x}[\hat{f}(x)]-\hat{f}(x))\bigr)
      %&+ (\EE_{x}[\hat{f}(x)] - \hat{f}(x))^2%\vphantom{\EE_{x \sim P(X)}}
    %\bigr].
  %\end{aligned}
\end{align*}

By an induction on $m$, the right side of the equation above is equal to $2^{m-5}{m-1\choose 4}$ for all $m\geq 6$ and its proof is included in Appendix~\ref{combina} (Proposition~\ref{comb_formular_2}). And this shows that for any $m\geq 5$,

$$\sum_{A\subset [m] \text{ and } |A|\geq 4 }r_A=\sum_{i=4}^{m-1}2^{i-4}{i\choose 4}.$$

This finishes the proof.
\end{proof}

\section{Large Scales $r\geq 4$}\label{conclusion_sec}

In general, one problem about determining the homotopty types of these Vietoris-Rips complexes is to identify the dimensions of spheres in theses homotopy types. Based on the results so far, the dimensions of the spheres in the homotopy types of $\mathcal{VR}(Q_m; r)$ in general start to appear at small $m$ and then only the folds of the spheres increase as $m$ gets larger and larger. For example,  the spheres of dimension $4$ and $7$ in the homotopy types of the complex $\mathcal{VR}(Q_m; 3)$ start to appear at $m=5$; and, the spheres of dimension $3$ in the homotopy types of  $\mathcal{VR}(Q_m; 2)$ start to appear at $m=3$. For any $r\geq 0$, we define $m(r)$ to be the smallest $m^\ast$ such that the homotopy type of $\mathcal{VR}(Q_{m^\ast}; r)$ contains at least one copy of the spheres which appear in the homotopy type of $\mathcal{VR}(Q_{m}; r)$ for arbitrary $m\geq m^\ast$. Hence overall, we have that $m(0)=1$, $m(1)=2, m(2)=3, m(3)=5$.  Hence, the following question is natural:

\begin{ques} What is $m(r)$ for $r\geq 4$? Is $m(4)=7$?\end{ques}

In \cite{AV23}, Adams and Virk identified a class of homology generators in the complex $\mathcal{VR}(Q_m; r)$ of the dimension $2^r-1$ which are all independent, hence found lower bounds on the rank of their  $(2^r-1)$-dimensional homology.     Let $K$ be a simplicial complex and $L$ a subcomplex of $K$. We say that $L$ is cross-polytopal if it is isomorphic to the boundary of a cross-polytope with $2^{d+1}$ vertices; hence such $L$ is homeomorphic to the sphere of dimension $2^d-1$. Going through the proofs in previous sections, we see that each copy  of spheres in the homotopy types of $\mathcal{VR}(Q_m; 3)$ is corresponding to one of its  cross-polytopal subcomplexes; but we don't know whether this is true for all the complexes $\mathcal{VR}(Q_m; r)$.

\begin{ques} Is the complex $\mathcal{VR}(Q_m; r)$ always homotopy equivalent to a wedge sum of spheres? Are all of the spheres in the homotopy types of $\mathcal{VR}(Q_m; r)$ generated by one of its cross-polytopal subcomplexes? \end{ques}

In general the following question is still open. This is an update of the question raised by Adamaszek and Adams in \cite{AA22}.

\begin{ques} What are the homotopy types of $\mathcal{VR}(Q_m; r)$ for $r=4, 5, \ldots, m-2$?

\end{ques}

\medskip

\textbf{Acknowlegments}

\medskip

We would like to thank Henry Adams, Samir Shukla, Anurag Singh, and \v{Z}iga Virk for useful discussions and kind advices.

\newpage
\appendix \section{Couple Identities in Combinatorics}\label{combina}

We denote this section to prove two identities in combinatorics which are used in the proof of the main theorem (Theorem~\ref{main}).

\begin{prop}\label{comb_formular_1} For $m\geq 5$, \[\sum_{i=3}^{m-1}{m-2\choose i-1}{i\choose 3}+\sum_{k=3}^{m-2}(\sum_{i=3}^{k} {k-1\choose {i-1}} {i\choose 3})=2^{m-4}{m-1\choose 3}.\]
\end{prop}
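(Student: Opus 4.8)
The plan is to give a double-counting proof, reading both sides as enumerations of pairs $(U,T)$ with $T\subseteq U\subseteq[m-1]$ and $|T|=3$. First I would rewrite the right-hand side: the subset-of-a-subset identity $\binom{m-1}{i}\binom{i}{3}=\binom{m-1}{3}\binom{m-4}{i-3}$ together with $\sum_i\binom{m-4}{i-3}=2^{m-4}$ gives
\[
2^{m-4}\binom{m-1}{3}=\sum_{i=3}^{m-1}\binom{m-1}{i}\binom{i}{3}=\sum_{U\subseteq[m-1]}\binom{|U|}{3},
\]
so the right-hand side counts exactly those pairs. The rest of the argument is to show that the two summands on the left count, respectively, the pairs with $m-1\in U$ and the pairs with $m-1\notin U$.

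For the pairs with $m-1\in U$: if $|U|=i$, then $U$ is determined by its remaining $i-1$ elements, which form a subset of $[m-2]$ ($\binom{m-2}{i-1}$ choices), and $T$ is an arbitrary $3$-subset of $U$ ($\binom{i}{3}$ choices); summing $i$ from $3$ to $m-1$ (the terms with $|U|<3$ vanishing) reproduces $\sum_{i=3}^{m-1}\binom{m-2}{i-1}\binom{i}{3}$, the first sum. For the pairs with $m-1\notin U$ — equivalently pairs $(W,T)$ with $T\subseteq W\subseteq[m-2]$, $|T|=3$ — I would classify $W$ by $k:=\max W$ and $i:=|W|$. Once $k$ and $i$ are fixed, $W$ is $\{k\}$ adjoined to an $(i-1)$-subset of $[k-1]$ ($\binom{k-1}{i-1}$ choices) and $T$ is a $3$-subset of $W$ ($\binom{i}{3}$ choices); as $k$ runs over $3,\dots,m-2$ and $i$ over $3,\dots,k$ this enumerates every such pair exactly once, yielding the double sum $\sum_{k=3}^{m-2}\sum_{i=3}^{k}\binom{k-1}{i-1}\binom{i}{3}$. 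Adding the two disjoint cases recovers $\sum_{U\subseteq[m-1]}\binom{|U|}{3}=2^{m-4}\binom{m-1}{3}$.

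I do not expect a real obstacle here; the only things that need care are the bookkeeping of index ranges — verifying that the lower limits $3$ discard only vanishing terms, that $m\ge5$ keeps all ranges nonempty and consistent, and that classifying a subset of $[m-2]$ by its maximum is an exhaustive, non-overlapping partition. An alternative route is induction on $m$: the base case $m=5$ is a short computation, and one checks that both sides satisfy the same first-order recurrence, the increment of the left side coming from the new block $k=m-2$ of the double sum plus the change in the first sum. I would avoid that route in the write-up, since it forces one to evaluate partial sums of the form $\sum_j\binom{j}{r}2^{j}$, which the double count sidesteps entirely.
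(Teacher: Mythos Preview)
Your double-counting proof is correct and is genuinely different from the paper's argument. The paper proceeds by induction on $m$: it verifies $m=5$ directly, then uses the inductive hypothesis to collapse the double sum $\sum_{k=3}^{m-2}\sum_{i=3}^{k}\binom{k-1}{i-1}\binom{i}{3}$ to $2^{m-5}\binom{m-2}{3}$, rewrites this as $\sum_{i}\binom{m-2}{i}\binom{i}{3}$ via the same subset-of-a-subset identity you invoke, and finishes with Pascal's rule $\binom{m-2}{i}+\binom{m-2}{i-1}=\binom{m-1}{i}$. Your bijective reading --- interpreting both sides as counting pairs $(U,T)$ with $T\subseteq U\subseteq[m-1]$, $|T|=3$, and splitting according to whether $m-1\in U$ and (when not) according to $\max U$ --- proves the identity in one stroke and makes the structure of the two summands transparent. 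One small correction to your closing remark: the paper's induction does \emph{not} in fact require evaluating sums of the shape $\sum_j\binom{j}{r}2^j$; it only needs the identity $\sum_i\binom{n}{i}\binom{i}{3}=2^{n-3}\binom{n}{3}$ together with Pascal's rule, so the inductive route is cleaner than you anticipated, though still less direct than your combinatorial argument.
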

\begin{proof} The result clearly holds for $m=5$. Assume that the result holds for $m-1$ for $m\geq 6$, i.e.

\[\sum_{i=3}^{m-2}{m-3\choose i-1}{i\choose 3}+\sum_{k=3}^{m-3}(\sum_{i=3}^{k} {k-1\choose {i-1}} {i\choose 3})=2^{m-5}{m-2\choose 3}.\]

Then,

\begin{equation*}
\begin{split}
%\sum_{i=3}^{m-1}{m-2\choose i-1}{i\choose 3}+\sum_{k=3}^{m-2}(\sum_{i=3}^{k} {k-1\choose {i-1}} {i\choose 3})=2^{m-5}{m-2\choose 3} +\sum_{i=3}^{m-1}{m-2\choose i-1}{i\choose 3} -\\ \sum_{i=3}^{m-2}{m-3\choose i-1}{i\choose 3}+\sum_{i=3}^{m-2} {m-3\choose {i-1}} {i\choose 3}\\
\sum_{i=3}^{m-1}{m-2\choose i-1}{i\choose 3}+\sum_{k=3}^{m-2}(\sum_{i=3}^{k} {k-1\choose {i-1}} {i\choose 3})=2^{m-5}{m-2\choose 3} +\sum_{i=3}^{m-1}{m-2\choose i-1}{i\choose 3}
\end{split}
\end{equation*}

Note that \[2^{m-5}{m-2\choose 3}=\sum_{i=3}^{m-1} {m-2\choose 3}{m-5\choose i-3}= \sum_{i=3}^{m-1} {m-2\choose i}{i\choose 3}.\]

Hence,
\begin{equation*}
\begin{split}
2^{m-5}{m-2\choose 3} +\sum_{i=3}^{m-1}{m-2\choose i-1}=\sum_{i=3}^{m-1} {m-2\choose i}{i\choose 3}+ \sum_{i=3}^{m-1}{m-2\choose i-1}{i\choose 3}\\= \sum_{i=3}^{m-1} {m-1\choose i}{i\choose 3} = 2^{m-4}{m-1\choose 3}.
\end{split}
\end{equation*}
This finishes the proof. \end{proof}

\begin{prop}\label{comb_formular_2} For $m\geq 6$, \[\sum_{i=4}^{m-2}2^{i-4}{i\choose 4}+\sum_{k=3}^{m-2}(\sum_{i=3}^{k} (m-k-1)\cdot {k-1\choose {i-1}} {i\choose 3})=2^{m-5}{m-1\choose 4}.\]
\end{prop}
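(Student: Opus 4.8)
The plan is to prove the identity by induction on $m$, in direct parallel with the proof of Proposition~\ref{comb_formular_1}, and to reduce the inductive step to Proposition~\ref{comb_formular_1} itself. Write
\[S(m)=\sum_{i=4}^{m-2}2^{i-4}{i\choose 4}+\sum_{k=3}^{m-2}\left(\sum_{i=3}^{k}(m-k-1){k-1\choose i-1}{i\choose 3}\right)\]
for the left-hand side; the goal is $S(m)=2^{m-5}{m-1\choose 4}$.

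First I would verify the base case $m=6$ directly: the first sum contributes only its $i=4$ term, equal to $1$, and the double sum contributes $2+7=9$, so $S(6)=10=2\cdot{5\choose 4}$, as required. For the inductive step, fix $m\ge 7$ and assume $S(m-1)=2^{m-6}{m-2\choose 4}$. I would then compute $S(m)-S(m-1)$ term by term. The first sum of $S(m)$ exceeds that of $S(m-1)$ by exactly its $i=m-2$ term, namely $2^{m-6}{m-2\choose 4}$. In the double sum the outer index now reaches $k=m-2$; that new term has weight $(m-k-1)=1$ and equals $\sum_{i=3}^{m-2}{m-3\choose i-1}{i\choose 3}$, while on the overlapping range $3\le k\le m-3$ the weight increases by $(m-k-1)-(m-k-2)=1$. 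Hence
\[S(m)-S(m-1)=2^{m-6}{m-2\choose 4}+\left(\sum_{i=3}^{m-2}{m-3\choose i-1}{i\choose 3}+\sum_{k=3}^{m-3}\sum_{i=3}^{k}{k-1\choose i-1}{i\choose 3}\right).\]

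The key observation — and the only substantive step — is that the parenthesized expression is precisely the left-hand side of Proposition~\ref{comb_formular_1} with $m$ replaced by $m-1$ (legitimate since $m-1\ge 6\ge 5$), so it equals $2^{m-5}{m-2\choose 3}$. Combining this with the inductive hypothesis gives
\[S(m)=2^{m-6}{m-2\choose 4}+2^{m-6}{m-2\choose 4}+2^{m-5}{m-2\choose 3}=2^{m-5}\left({m-2\choose 4}+{m-2\choose 3}\right)=2^{m-5}{m-1\choose 4}\]
by Pascal's rule, which closes the induction.

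The main obstacle is purely bookkeeping: correctly splitting the difference of the double sums into the genuinely new $k=m-2$ term plus the uniform $+1$ shift of the weights on the overlapping range of $k$, so that what remains is literally an instance of Proposition~\ref{comb_formular_1}. Once that matching is made, the base case and Pascal's identity finish everything.
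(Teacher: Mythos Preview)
Your proof is correct and follows essentially the same approach as the paper: induction on $m$ with base case $m=6$, splitting $S(m)-S(m-1)$ into the new $i=m-2$ term of the first sum plus the pieces of the double sum that together form exactly the left-hand side of Proposition~\ref{comb_formular_1} at $m-1$, and finishing with Pascal's rule. Your write-up is in fact cleaner and more explicit than the paper's version, which carries out the same computation but with several typographical slips in the displayed formulas.
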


\begin{proof} The equation holds for $m=6$. Assume that the equation holds for $m-1$ for $m\geq 7$, i.e.

\[\sum_{i=4}^{m-3}2^{i-4}{i\choose 4}+\sum_{k=3}^{m-3}(\sum_{i=3}^{k} (m-k-2)\cdot {k-1\choose {i-1}} {i\choose 3})=2^{m-6}{m-2\choose 4}.\]

Then
\begin{equation*}
\begin{split}
\sum_{i=4}^{m-2}2^{i-4}{i\choose 4}+\sum_{k=3}^{m-2}(\sum_{i=3}^{k} (m-k-1)\cdot {k-1\choose {i-1}} {i\choose 3})\\= 2^{m-5}{m-2\choose 4}+\sum_{i=3}^{m-2} {k-1\choose {i-1}} {i\choose 3}+\sum_{k=3}^{m-2}(\sum_{i=3}^{k} \cdot {k-1\choose {i-1}} {i\choose 3})
\end{split}
\end{equation*}

By Proposition~\ref{comb_formular_1},  \[\sum_{i=3}^{m-2} {k-1\choose {i-1}} {i\choose 3}+\sum_{k=3}^{m-3}(\sum_{i=3}^{k} \cdot {k-1\choose {i-1}}=2^{m-5}{m-2\choose 3}.\]

Also note that  \[2^{m-5}{m-2\choose 4}+2^{m-5}{m-2\choose 3}=2^{m-5}{m-1\choose 4}.\]

By induction, the equation holds for all $m\geq 6$.
\end{proof}

\end{document}